\definecolor{light-gray1}{gray}{0.90}
\definecolor{light-gray2}{gray}{0.80}
\definecolor{light-gray3}{gray}{0.60}
\title{Dispersive estimates for rational symbols and local well-posedness of the nonzero energy NV equation}
\date{}
\author{Anna Kazeykina\footnote{Laboratoire de Math\'ematiques d'{}Orsay UMR 8628, B\^at. 425 Facult\'e des Sciences d'{}Orsay,
Universit\'e Paris-Sud F-91405 Orsay Cedex France. E-mail: anna.kazeykina at math.u-psud.fr} \quad and \quad  Claudio Mu\~noz\footnote{CNRS and Laboratoire de Math\'ematiques d'{}Orsay UMR 8628, B\^at. 425 Facult\'e des Sciences d'{}Orsay,
Universit\'e Paris-Sud F-91405 Orsay Cedex France. E-mail: claudio.munoz at math.u-psud.fr}}
\begin{document}

\maketitle
\markboth{A}{Anna Kazeykina and Claudio Mu\~noz}
\renewcommand{\sectionmark}[1]{}
\begin{abstract}
We consider the Cauchy problem for the two-dimensional Novikov-Veselov equation integrable via the inverse scattering problem for the Schr\"odinger operator with fixed negative energy. The associated linear equation is characterized by a rational symbol which is not a polynomial, except when the energy parameter is zero. With the help of a complex analysis point of view of the problem, we establish uniform decay estimates for the linear solution with gain of almost one derivative, and we use this result together with Fourier decomposition methods and $X^{s,b}$ spaces to prove local well-posedness in $H^s$, $s>\frac12$.
\end{abstract}


\newtheorem{thm}{Theorem}[section]
\newtheorem{cor}[thm]{Corollary}
\newtheorem{lem}[thm]{Lemma}
\newtheorem{prop}[thm]{Proposition}
\theoremstyle{definition}
\newtheorem{defn}{Definition}[section]

\theoremstyle{remark}
\newtheorem{rem}{Remark}[section]
\newtheorem*{notation}{Notation}
\newtheorem{Cl}{Claim}

\numberwithin{equation}{section}

\newcommand{\const}{\mathrm{const}}
\newcommand{\sgn}{\mathrm{sgn}}
\renewcommand{\Re}{\mathrm{Re}}
\renewcommand{\Im}{\mathrm{Im}}
\newcommand{\re}{\operatorname{Re}}
\newcommand{\ima}{\operatorname{Im}}
\newcommand{\interior}{\mathrm{int}}
\newcommand{\meas}{\operatorname{meas}}
\newcommand{\R}{\mathbb{R}}
\newcommand{\N}{\mathbb{N}}
\newcommand{\Z}{\mathbb{Z}}
\newcommand{\T}{\mathbb{T}}
\newcommand{\Q}{\mathbb{Q}}
\newcommand{\Com}{\mathbb{C}}
\newcommand{\la}{\lambda}
\newcommand{\pd}{\partial}
\newcommand{\al}{\alpha}
\newcommand{\bt}{\beta}
\newcommand{\ga}{\gamma}
\newcommand{\de}{\delta}
\newcommand{\te}{\theta}
\newcommand{\arctanh}{\operatorname{arctanh}}
\newcommand{\spawn}{\operatorname{span}}
\newcommand{\sech}{\operatorname{sech}}
\newcommand{\dist}{\operatorname{dist}}
\newcommand{\diam}{\operatorname{diam}}
\newcommand{\supp}{\operatorname{supp}}
\newcommand{\pv}{\operatorname{pv}}
\newcommand{\vv}[1]{\partial_x^{-1}\partial_y{#1}}

\def\bm{\left( \begin{array}{cc}}
\def\endm{\end{array}\right)}
\providecommand{\abs}[1]{\left|#1 \right|}
\providecommand{\norm}[1]{\left\| #1 \right\|}
\newcommand{\sublim}{\operatornamewithlimits{\longrightarrow}}
 \newcommand{\ex}{{\bf Example}:\ }
\newcommand{\ve}{\varepsilon}
\newcommand{\fin}{\hfill$\blacksquare$\vspace{1ex}}

\newcommand{\be}{\begin{equation}}
\newcommand{\ee}{\end{equation}}
\newcommand{\ba}{\begin{equation*}}
\newcommand{\ea}{\begin{equation*}}
\newcommand{\bea}{\begin{eqnarray}}
\newcommand{\eea}{\end{eqnarray}}
\newcommand{\bee}{\begin{eqnarray*}}
\newcommand{\eee}{\end{eqnarray*}}
\newcommand{\ben}{\begin{enumerate}}
\newcommand{\een}{\end{enumerate}}
\newcommand{\nonu}{\nonumber}

\setlength\arraycolsep{2pt}

\section{Introduction. Main results}

\medskip

In this paper we consider the initial value problem associated to the Novikov-Veselov (NV) equation, in two dimensions, with a fixed energy parameter $E\in \R$ \cite{NV}:
\bea
\partial_t v  & =& 8 (\partial_z^3 + \partial^3_{\bar z}) v +2\partial_z (vw) +2 \partial_{\bar z}(v \bar w) -2E(\partial_z w + \partial_{\bar z} \bar w ), \label{NV1} \\
\partial_{\bar z} w& =&  -3 \partial_z v \label{NV2}.
\eea
Here $v=v(t,x,y)$, $w=w(t,x,y)$, and
\[
z=x+iy =(x,y) \in \Com.
\]
The complex partial derivative operators $\partial_z$ and $\partial_{\bar z}$ are defined as usual:
\[
\partial_z := \frac12 (\partial_x - i \partial_y),  \quad \partial_{\bar z} := \frac12 (\partial_x + i \partial_y), \quad (\Delta_{x,y} =4 \partial_z\partial_{\bar z} =4 \partial_{\bar z}\partial_z).
\]
Formally, one has  $w = -3\partial_{\bar z}^{-1} \partial_z v$, where $\partial_{\bar z}^{-1} \partial_z$ can be defined via the Fourier transform $\mathcal F$, acting on $L^2(\R^2)$, $\xi=(\xi_1,\xi_2)\in\R^2\backslash\{(0,0)\}$:
\be\label{W_V}
\mathcal F[\partial_{\bar z}^{-1} \partial_z f] (\xi_1,\xi_2)= \Big(\frac{\xi_1 -i \xi_2}{\xi_1 + i \xi_2}\Big)\mathcal F[f](\xi_1,\xi_2).
\ee
In addition, note that the right hand side in \eqref{NV1} has the structure of `a real part'. Consequently, one looks for $v$ real-valued:
\[
v(t,x,y) \in \R, \quad w(t,x,y) \in \Com,
\]
so that the first equation can be written as follows:
\[
\partial_t v = 4 \, \Re \{ \partial_{z} (4 \, \partial_z^2 v - 3\, v\partial_{\bar z}^{-1} \partial_z v +3\, E\partial_{\bar z}^{-1} \partial_z v) \} .
\]
For the sake of completeness, we write equations \eqref{NV1}-\eqref{NV2} in terms of real-valued coordinates $(x,y)$. If we identify $w=w_1+iw_2 $ with the vector field $w=(w_1,w_2)$, with $w_1,w_2$ real-valued, then \eqref{NV2} becomes
\[
\partial_y w_1+\partial_x w_2 =3~\partial_y v, \quad \partial_y w_2- \partial_x w_1 =3~ \partial_x v,
\]
and \eqref{NV1} reads
\be\label{NV_xy}
\partial_t v = 2\left[ \partial_x (\partial_x^2 v-3 \partial_y^2v) +\nabla .(vw) -E\nabla .w \right].
\ee
See Appendix \ref{A} for a proof of this fact. However, for most of this paper we will work with the $z$-$\bar z$ formulation \eqref{NV1}-\eqref{NV2}, which presents several advantages when computing precise estimates.

\medskip

The NV equations are important for several reasons. First of all, \eqref{NV1}-\eqref{NV2} is a \emph{completely integrable} model in two dimensions, in the spirit of the Kandomtsev-Petviashvili (KP) and the Davey-Stewartson (DS) equations \cite{KBook}. However, unlike the two last models, the origin of NV is not physical, but mathematical \cite{NV}: it arises as the integrable equation obtained by assuming that the  associated scattering problem corresponds to the standard, stationary Schr\"odinger equation in two dimensions with fixed energy $E$:
\be\label{Schrodinger}
( -\Delta_{x,y} + v(t,x,y) -E ) \psi = 0.
\ee
In that sense, NV is the closest model that generalizes (in a mathematical form) the famous Korteweg-de Vries (KdV) equation to the two dimensional case, and it has interesting connections with the scattering problem for bounded or localized potentials, and the theory of  inverse problems for sufficiently smooth potentials, see e.g. \cite{Perry} and references therein for more details. Moreover, if $v$ depends only on $x$, we have $z=\bar z=x$, $\partial_z =\partial_{\bar z} =\frac12 \partial_x$ and
\[
w = -3 v,  \quad \partial_t v = 2   \partial_{x} (\partial_x^2 v - 3 v^2 + 3 E v),
\]
which is a KdV equation.\footnote{Note that in this case $u(t,x):=-v(-t/2 , x - 3Et)$ solves the standard KdV equation $u_t + u_{xxx} +6uu_x=0.$}

\medskip

Another interesting connection between NV and other integrable models is the following.
It can be formally shown (see \cite{Gr2} for example and Appendix \ref{B} for more details) that when the parameter of energy $ E $ tends to $ \pm \infty $ the NV equations, rescaled appropriately, become in the limit KP equations (KPI corresponding to $ E \to +\infty$ and KPII corresponding to $ E \to -\infty $).  It was also shown formally in \cite{Gr2} that there is a a corresponding convergence of scattering problems for the KP and NV equations. In order to obtain a rigorous proof of such asymptotic regime, a suitable well-posedness theory for NV equations is necessary, with explicit bounds of the solution and its lifespan depending on $E$.

\medskip

The Novikov-Veselov equation was first obtained in an implicit form by S.V. Manakov in \cite{Manakov}.
It has the following operator representation
\be\label{Manakov_triple_0}
\partial_t  L = [L,A] +BL,
\ee
where
\[
L := -\Delta_{x,y} + v(t,x,y)  -E, \quad A:= -8(\partial_z^3 +\partial_{\bar z}^3)  -2(w \partial_z +\bar w \partial_{\bar z}),
\]
and
\[
B:= 2(\partial_z w + \partial_{\bar z}\bar w),
\]
with $ w $ given by (\ref{NV2}) and $[\cdot,\cdot]$ denoting the commutator. Equation (\ref{Manakov_triple_0}) is the compatibility condition of
\[
L \varphi =0, \quad \varphi_t = A\varphi.
\]

Representation (\ref{Manakov_triple_0}) is called Manakov triple representation for (\ref{NV1}) and can be considered as a generalization of the Lax pair representation for KdV (see \cite{Lax}) to the $ ( 2 + 1 ) $-dimensional case.
\medskip

From the integrability of \eqref{NV1}-\eqref{NV2} it follows that NV formally has an infinite number of conserved quantities. Here we write the first three that have a correct meaning (each integral is taken on $\Com \cong \R^2$): the $L^1$ integral
\be\label{L1}
\int v(t) dxdy =\int v(0)dxdy, \quad \hbox{(real-valued),}
\ee
the ``mass''
\bea\label{Mass}
M[v](t) & :=& \int vw(t) dxdy \nonu\\
& =& \int vw(0)dxdy, \quad \hbox{(complex-valued, two identities),}
\eea
and the ``energy'':
\bea\label{Energy}
H[v](t) & := & \int \Big[  6\,  \partial_z w \partial_z v  + E w^2 - vw^2  \Big] (t)dxdy \nonu\\
& =& H[v](0),\qquad \hbox{(complex-valued, two identities).}
\eea
We see that  $H^1(\R^2)$ is the energy space, for if $v\in H^1$, then $\partial_z v$ and $\partial_{\bar z} v$ are in $L^2$, and $w \in H^1$. It is not difficult to check that we also have\footnote{Indeed, this fact is a consequence of the Gagliardo-Nirenberg inequality
\be\label{GN}
\|u\|_{L^p} \leq C\|\nabla u\|_{L^2}^{\al} \|u\|_{L^2}^{1-\al}, \quad \al\in [0,1), ~ p=\frac2{1-\al}.
\ee}
\[
 v, w\in L^3(\R^2).
 \]
Compared with other dispersive models coming from Physical theories, NV equations lack of signed conserved quantities which control the long time dynamics. For this reason, a good understanding of some particular explicit solutions is essential. These regimes will strongly depend on the sign of the energy parameter $ E $: we will see that $E<0$ corresponds to a sort of defocusing case, whilst $E>0$ is the focusing regime. The case $E=0$ is different in several aspects.

\medskip

Indeed, the literature \cite{KBook} describes three different regimes associated to \eqref{NV1}-\eqref{NV2}, depending on the value of the energy $E$: the case where $E=0$ (usually denoted as $NV_0$), and the cases where $E>0$ ($NV_+$) or $E<0$ ($NV_-$).

\medskip

In the case $E=0$, using direct and inverse scattering techniques, Perry \cite{Perry2} has been able to show existence of a solution for a certain set of initial data. Angelopoulous \cite{A} showed that the Cauchy problem for \eqref{NV1}-\eqref{NV2} is locally well-posed for $v(t=0)\in H^s(\R^2)$, $s>\frac12$, by following the ideas of Molinet and Pilod \cite{MP} for the Zakharov-Kuznetsov equation. Below $L^2(\R^2)$ he showed \cite{A} that the flow has some ill-posed behavior. One may ask if by using \eqref{Mass} or \eqref{Energy} the well-posedness result may be extended to a global one. The answer is surprisingly no (another fact that reveals that NV does not enjoy standard physical properties). For any $a,c,d\in \R$ such that $a+c (x^3+y^3) +d(x^2+y^2)^2>0$ everywhere, the function
\[
v(t,x,y)= -2 \Delta_{x,y} \log (a -24c t +c (x^3+y^3) +d(x^2+y^2)^2)
\]
solves $NV_0$, decays like $r^{-3}$ at infinity ($r=\sqrt{x^2+y^2}$), and it blows up at finite time (see also \cite{TS2008}).
Note additionally that if $v(t,x,y)$ is solution of NV, then for all $\la>0$,
\be\label{Sca1}
v_\la(t,x,y) := \la^2 v(\la^3 t,\la x,\la y), \quad \hbox{and} \quad w_\la(t,x,y) := \la^2 w(\la^3 t,\la x,\la y)
\ee
satisfy \eqref{NV1}-\eqref{NV2} with
\be\label{Sca2}
E_\la := E \la^2.
\ee
In the case $E=0$ the equation does not change and we have another solution of the same equation.\footnote{Therefore, the critical space for $E=0$ is $H^{-1}$.}  Using this scaling symmetry, we can construct a solution that blows up at every time of the form $t_\la$, 
$\la\neq 0$, at the same points as before. Moreover, since $H^1$ or even $L^2$ are subcritical regularities, one realizes that  there are arbitrarily small solutions that blow up in finite time, unlike in standard dispersive models. In addition, the time of existence will not depend on the size of the solution only.

\medskip



In the case of negative energy, $NV_-$ is in some sense reminiscent of the KPII equation studied by Bourgain in \cite{B1}.
It has been shown in \cite{K2} via the inverse scattering techniques that in this case the equation does not possess sufficiently regular soliton solutions decaying as $ r^{-3 - \varepsilon} $, $ \varepsilon > 0 $ (compare with results of \cite{BS1}, \cite{BS2}, \cite{BM} on the absence of sufficiently localized solutions of KPII). The inverse scattering theory also allowed to obtain the large time asymptotics (uniform in space) for sufficiently regular solutions of $ NV_- $ with nonsingular scattering data (in particular, sufficiently small solutions have nonsingular scattering data) (see \cite{K} and compare with an analogous uniform large-time asymptotic result for KPII in \cite{Kis}). In this paper, we prove the following result.

\begin{thm}\label{LWP_neg}
$NV_-$ is locally well-posed in $H^s(\R^2)$ for any $s>\frac 12$. The lifespan of solution is proportional to the size of the energy $|E|$.
\end{thm}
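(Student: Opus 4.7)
The plan is to carry out a Bourgain-space local well-posedness scheme adapted to the rational symbol of $NV_-$, along the lines of Angelopoulos's treatment of $NV_0$ but relying crucially on the dispersive estimate with gain of almost one derivative proved earlier in the paper.

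First I would eliminate $w$ using $w=-3\partial_{\bar z}^{-1}\partial_z v$ so that \eqref{NV1}--\eqref{NV2} reduces to a single scalar equation of the form $\partial_t v = i\phi_E(D) v + N(v,v)$, where the linear symbol $\phi_E$ is the sum of the usual cubic $8i(\xi_1^3-3\xi_1\xi_2^2)$ and a rational (non-polynomial) perturbation coming from $-2E(\partial_z w+\partial_{\bar z}\bar w)$, and the nonlinearity is the bilinear form $N(u,v)=\partial_z(u\,\mathcal{R} v)+\partial_{\bar z}(u\,\overline{\mathcal{R}v})$ with $\mathcal{R}$ the zero-order multiplier of symbol $(\xi_1-i\xi_2)/(\xi_1+i\xi_2)$ from \eqref{W_V}. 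I would then pass to the Duhamel equation $v(t)=U_E(t)v_0+\int_0^t U_E(t-s)N(v,v)(s)\,ds$ with $U_E(t)=e^{it\phi_E(D)}$.

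Next I would introduce Bourgain spaces $X^{s,b}_E$ with norm $\|\langle\xi\rangle^s\langle\tau-\phi_E(\xi)\rangle^b\widehat u(\tau,\xi)\|_{L^2_{\tau,\xi}}$, together with their time-localized versions $X^{s,b}_{E,T}$, and set up a contraction-mapping argument in $X^{s,b}_{E,T}$ for $s>\frac12$ and $b$ slightly above $\frac12$. After the standard linear $X^{s,b}$ estimates (energy and inhomogeneous bounds), the whole fixed-point reduces to proving a bilinear estimate of the form
\[
\|N(u,v)\|_{X^{s,b-1}_E}\lesssim T^{\delta}\,\|u\|_{X^{s,b}_E}\|v\|_{X^{s,b}_E}
\]
for some $\delta>0$. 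To prove this I would dualize to a trilinear $L^2$ integral in $(\tau_j,\xi_j)$, apply a Littlewood--Paley decomposition in the spatial frequencies, and do a case-by-case resonance analysis on the modulations $\sigma_j=\tau_j-\phi_E(\xi_j)$; the single derivative carried by $N$ is absorbed by the near-one-derivative gain from the dispersive estimate, inserted into the Bourgain framework via Strichartz-type embeddings. The $|E|$-dependence of the lifespan is then obtained by tracking the explicit $E$-dependence of the constants in the dispersive estimate and hence in the bilinear estimate, and can be sharpened using the scaling \eqref{Sca1}--\eqref{Sca2}, which preserves the sign of $E$.

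The main obstacle I anticipate is the bilinear estimate itself, for two related reasons. First, the symbol $\phi_E$ is genuinely rational, so it carries additional critical points coming from the energy correction, and the resonance function $\phi_E(\xi_1+\xi_2)-\phi_E(\xi_1)-\phi_E(\xi_2)$ no longer has the nice algebraic structure of the $E=0$ case; the behaviour of this function is what determines how much $X^{s,b}$ regularity can be converted into derivative gain. Second, the nonlocal factor $\mathcal{R}$ is of order zero but has a directional singularity at the origin, forcing a separate careful treatment of low-frequency interactions and of the singularity of $\phi_E$ near $\xi=0$. Both difficulties are precisely the reason the gain of almost one derivative in the linear dispersive estimate is indispensable: it is exactly what is needed to absorb the derivative in $N$ up to an $\varepsilon$-loss, which is then recovered by the $T^\delta$ factor, closing the contraction.
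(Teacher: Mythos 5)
Your plan coincides with the paper's own proof: the authors also reduce to a Duhamel fixed point in Bourgain spaces $X^{s,b}_E$ adapted to the rational symbol, prove the key bilinear estimate by dualizing, performing dyadic (frequency and modulation) decompositions with a case-by-case resonance analysis — low-high interactions handled via measure bounds on the resonance set obtained from lower bounds on derivatives of the resonance function, and high-high interactions via the $L^4$ Strichartz estimate with $1/4^-$ derivative gain coming from the almost-one-derivative dispersive estimate — and then close the contraction for $s>\frac12$, tracking powers of $|E|$ to get the lifespan bound. So your proposal is essentially the same approach; the only caveat is that in practice the one-derivative $L^\infty$ gain enters only as a $1/4^-$ gain per factor in $L^4$, the remaining half derivative being paid for by $s>\frac12$, exactly as in the paper.
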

The precise version of this Theorem is formulated as Thereom \ref{Cauchy} below. Here we present briefly the ideas of the proof of this result.

\medskip

In the proof of Theorem \ref{LWP_neg}  we follow the ideas of Bourgain, extended and simplified by many authors; in particular we use the formalism employed by Molinet and Pilod in \cite{MP} fot the study of low-regularity solution to the Zakharov-Kuznetsov (ZK) equation in several geometries in 2 and 3 dimensions. Note that in \cite{MP} the authors deal with symbols whose resonance functions seem a priori intractable via Bourgain's ideas (their method was developed in \cite{A} for NV in the case $ E = 0 $). It is essential in their work that Molinet and Pilod use a sharp $L^4$ Strichartz estimate with gain of derivatives obtained by Carbery, Kenig and Ziesler \cite{CKZ} for polynomial symbols like the one in ZK. In our case, we do not have a polynomial symbol, but a fractional one, much in the spirit of KP equations. We overcome the difficulty of showing a Carbery-Kenig-Ziesler type estimate by proving almost sharp smoothing estimates in two dimensions for the linear part of the equation, in the spirit of Kenig, Ponce and Vega \cite{KPV_Indiana}, and Saut \cite{Saut}, a task that requires some fine Fourier analysis coming from the improvement of previous results showed by the first author \cite{KN,K} in the framework of the Inverse Scattering  Transformation approach. It turns out that we gain almost one derivative in $L^\infty$ with decay slighty slower than $1/t$, which suffices for us to close standard Bourgain's bilinear estimates for $s>\frac12$.

\medskip

In addition to these estimates, we perform a standard Fourier localization between low and high frequencies. We recall that the resonance function that appears when dealing with the interaction of low-high to high frequencies is treated by estimating the zero level set via reasonable lower bounds on the partial derivatives of the resonance function. Such estimates are simple to establish, but they carry a loss of accuracy (probably 1/2 of derivative) that could be avoided by dealing directly with the resonance function as Bourgain did in \cite{B1}; however, such a task substantially more difficult given the complexity of the linear NV symbol (see \eqref{w0} for more details). However, it is worth to mention that the nonzero energy NV symbol has some useful boundedness properties near the origin, unlike standard KP equations. Finally, the high-high to high frequencies interactions are treated using the almost sharp smoothing estimate for the linear symbol, which leads to a Strichartz  estimate with $1/4^-$ gain of derivative in $L^4$.

\medskip
%

We finish this introductory section with some words about the global existence problem. Although the equation is in nature well-behaved for high regularity initial data, the problem of global existence is far from trivial because of the lack of an evident sign in the real and imaginary part of the conservation laws, see \eqref{Mass} and \eqref{Energy}. Even worse, some nice initial data may have bad behavior in terms of conserved quantities: for example any smooth, rapidly decaying radial data has radial Fourier transform and consequently it mass \eqref{Mass} is identically zero:
\[
\int vw = \int \overline{\hat v}\hat w = \int_0^\infty\int_0^{2\pi} e^{-2i\theta} |\hat v(r)|^2 r\ dr d\theta =0.
\]
This implies that in principle the mass cannot be used to bound the $L^2$ norm of the solution, as usually performed in other physical models. A similar conclusion holds for the two first terms in the Hamiltonian \eqref{Energy}.

\medskip

It turns out that in the case of NV equations, such a problem is deeply related with the behavior of scattering solutions of the associated Schr\"odinger operator \eqref{Schrodinger}. The results of Novikov \cite{Nov} imply global existence for initial data with suitable spectral properties, via the inverse scattering transform. Very recently, Perry \cite{Perry2} showed that in the case of zero energy, if the initial data are of conductivity type, in addition to other decay assumptions, then $NV_0$ has a global weak solution that is classical if the initial data are in the Schwartz class. Recall that a potential $v_0 \in C_0^\infty (\R^2)$ is of conductivity type if the equation
\[
(-\Delta_{x,y} + v_0) \varphi=0
\]
admits a unique positive solution such that $\varphi \equiv 1$ in a neighborhood of infinity. This definition comes from the theory of Inverse Problems, see \cite{Calderon,SU,Nachman} for more details on this interesting connection.

\medskip

Finally, some words about the case $E>0$. The methods of this paper are not entirely suitable for the case $E>0$. The main obstacle is the fact that the smoothing Strichartz estimate is not longer sufficient to control the large-to-large frequencies regime, and an approach of the type Carbery-Kenig-Ziesler is probably needed. We hope to consider the remaining case in a forthcoming publication.

\medskip

{\bf Acknowledgments.} The second author would like to thank the Laboratoire de Math\'e-matiques d'{}Orsay for their kind hospitality during past years, and Didier Pilod for explaining his paper \cite{MP} and giving several useful comments about a preliminary manuscript. The authors also thank Jean-Claude Saut for the initial motivation of this study and for many useful discussions.

\medskip

{\bf Notation.} In the following text notation ``$ A \lesssim B $'' means that there exists a constant $ c $ (depending only on $ \alpha $) such that $ A \leq c B $.

\medskip

\section{Smoothing estimates for negative energies}

The aim of this section is to estimate the integral
\be\label{I}
I (t,u;E):= \int\limits_{ \mathbb{C} } | \xi |^{ \alpha+i\bt }  e^{ i t \tilde S( u, \xi ;E) } d \Re \xi d \Im \xi,
\ee
where
\[
 u = \frac{z}t,
\]
and
\begin{equation}
\label{st_phase}
\tilde S( u, \xi ; E) = ( \xi^3 + \bar \xi^3 )\left( 1 - \frac{ 3 E }{ |\xi|^2 } \right) + \frac{ 1 }{ 2 }( \bar u \xi + u \bar \xi ).
\end{equation}
Note that
\begin{equation*}
p( \xi ) = ( \xi^3 + \bar \xi^3 )\left( 1 - \frac{ 3 E }{ |\xi|^2 } \right)
\end{equation*}
is the symbol of the linear part of the NV equation.


\begin{lem}[Dispersion estimate for negative energy]
\label{asp_lin_estimate_lemma}
Let $ 0 \leq \alpha < 1 $ and $\bt \in \R$. Then for all $ t > 0 $ and for any fixed $\ve(\al)>0$ small, the following estimate is valid:
\begin{equation}\label{I_decrease}
| I (t,u;-1)| \lesssim  \frac{(1+|\bt|) }{ t^{(\alpha + 3)/4 -\ve } },
\end{equation}
uniformly on $ u \in \mathbb{C} $. Recall that the implicit constant depends on $\al$ only.
\end{lem}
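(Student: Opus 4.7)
I would attack this as a two-dimensional oscillatory integral, working in polar coordinates and combining stationary phase in $\theta$ with integration by parts in $\rho$. Setting $\xi = \rho e^{i\theta}$ and $u=|u|e^{i\varphi}$ turns the integrand into $\rho^{\alpha+1}e^{i\beta\log\rho}e^{it\Phi(\rho,\theta)}$, where
\begin{equation*}
\Phi(\rho,\theta)=2(\rho^{3}+3\rho)\cos 3\theta + |u|\rho\cos(\theta-\varphi).
\end{equation*}
The modulation $e^{i\beta\log\rho}$ is a mild radial phase, and a single integration by parts in the outer $\rho$-variable is what will produce the advertised $(1+|\beta|)$ factor, so from now on the essential task is to extract cancellation from $e^{it\Phi}$.

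For fixed $\rho$, I would first estimate the angular integral $J(\rho):=\int_{0}^{2\pi}e^{it\Phi(\rho,\cdot)}\,d\theta$. One computes
\begin{equation*}
\partial_\theta\Phi = -6(\rho^{3}+3\rho)\sin 3\theta - |u|\rho\sin(\theta-\varphi),\qquad \partial_\theta^{2}\Phi = -18(\rho^{3}+3\rho)\cos 3\theta - |u|\rho\cos(\theta-\varphi).
\end{equation*}
When $|u|\rho \lesssim \rho^{3}+3\rho$ the six critical points of $\Phi(\rho,\cdot)$ sit near the zeros of $\sin 3\theta$ and are non-degenerate of strength $\sim \rho^{3}+3\rho$, so stationary phase yields $|J(\rho)|\lesssim (t(\rho^{3}+3\rho))^{-1/2}$. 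In the opposite regime $|u|\rho\gtrsim \rho^{3}+3\rho$, the same $1/2$-decay is recovered via a second-order Van der Corput estimate after excising short intervals where $\partial_\theta^{2}\Phi$ happens to vanish; on those small intervals the trivial bound $|J|\leq 2\pi$ suffices.

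With this angular bound in hand, I would feed it into the outer radial integral. Where the radial phase derivative $t\partial_\rho\Phi + \beta/\rho$ is bounded away from zero, one integration by parts gives additional decay and also accounts for the $(1+|\beta|)$ growth. Near the joint critical points of $\Phi$ I would instead apply the planar stationary phase formula; a direct calculation of the Hessian of $\Phi$ shows $|\det\mathrm{Hess}\,\Phi|\sim (\rho^{2}+1)(\rho^{3}+3\rho)$ off a codimension-one locus, so the critical-region contribution behaves like $t^{-1}\rho^{\alpha-1}$ evaluated at the critical radius $\rho_{*}\sim(|u|/t)^{1/2}$. Decomposing $\rho\sim 2^{j}$ dyadically and balancing the contributions from the three regimes (non-stationary in $\rho$, angular Van der Corput, and full two-dimensional stationary phase) yields the claimed rate $t^{-(\alpha+3)/4}$; the arbitrarily small $\varepsilon$ absorbs the $\log t$ loss inherent in the dyadic summation and the slack introduced by the crude bounds near degenerate angular critical points.

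The hardest step is the transitional regime $|u|\rho\sim \rho^{3}+3\rho$, where the angular critical points detach from the zeros of $\sin 3\theta$ and may coalesce with zeros of $\partial_{\rho}\Phi$ to form degenerate critical points of the full phase. The linear correction $3\rho$ specific to $E=-1$ keeps $\Phi$ regular at $\xi=0$ but destroys the exact $3$-homogeneity of the $E=0$ symbol, so no global scaling is available and the balance between angular and radial contributions must be done scale by scale; this absence of homogeneity is ultimately what forces the $\varepsilon$ loss in the exponent.
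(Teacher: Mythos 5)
Your route (direct two--dimensional stationary phase in $\xi$--polar coordinates) is genuinely different from the paper's, which first performs the substitution \eqref{ch_var}, $\xi=-i(\lambda-1/\bar\lambda)$, blowing the origin up into the unit circle and decoupling the phase into a holomorphic plus antiholomorphic part, so that the stationary points are roots of a single cubic in $\zeta=\lambda^2$ (Lemma \ref{st_points_lemma}), and then estimates \eqref{new_type} by an $\varepsilon$--disk/Stokes--formula integration by parts. In principle your coordinates carry the same information, but as written the proposal has a genuine gap precisely in the regime that dictates the exponent $(\alpha+3)/4$, namely $u$ on or near the curve $\mathcal{U}$ (e.g.\ $u=18$), where in your variables all the stationary behavior degenerates into the boundary $\rho\to 0$. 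For $u=18$ the phase $\Phi$ vanishes identically on the rays $\theta=\pm\pi/2$, and near them one has $\partial_\theta\Phi\approx 6\rho^3-72\rho\,\psi^2$, $\partial^2_\theta\Phi\to 0$, $|\partial^3_\theta\Phi|\sim\rho$, $\partial_\rho\Phi\approx 18\rho^2\psi$ (with $\psi=\theta-\pi/2$). Hence your claimed uniform angular bound $|J(\rho)|\lesssim (t(\rho^3+3\rho))^{-1/2}$ is false there: excising the intervals where $\partial_\theta^2\Phi$ vanishes and using the trivial bound on them does not recover $1/2$--decay -- the optimized Van der Corput estimate with the third derivative only gives $(t\rho)^{-1/3}$, and feeding that into the radial integral does not produce $t^{-(\alpha+3)/4}$ without a much finer anisotropic analysis of the region near the origin (where, in addition, the factor $3|E|/|\xi|^2$ makes the phase merely Lipschitz in Cartesian variables). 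Likewise, asserting $|\det\mathrm{Hess}\,\Phi|\sim(\rho^2+1)(\rho^3+3\rho)$ ``off a codimension-one locus'' sidesteps the crux: the uniform-in-$u$ estimate is decided exactly on and near that locus, where the roots coalesce.

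Concretely, what is missing is any quantitative treatment of the coalescence. In the paper's variables, at a vertex of $\mathcal{U}$ the three roots merge, $S_\lambda\sim(\lambda-\lambda_0)^3$, so the phase has a quartic degeneracy $\sim\Im(\lambda-\lambda_0)^4$ against a weight vanishing like $|\lambda-\lambda_0|^{\alpha+1}$, and balancing the $\varepsilon$--disk contribution $\varepsilon^{\alpha+3}$ against the integrated-by-parts exterior $t^{-1}\varepsilon^{\alpha-1}$ (with $\varepsilon\sim t^{-1/4}$) is precisely what yields $t^{-(\alpha+3)/4}$. Your sketch never derives this exponent from anything: the final ``balance scale by scale'' paragraph is exactly where the proof would have to live, and it is not carried out. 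Two smaller but symptomatic slips: the critical radius for large $|u|$ is $\rho_*\sim|u|^{1/2}$, not $(|u|/t)^{1/2}$ (the parameter $u=z/t$ already carries the time dependence, and the estimate must be uniform in $u$); and the $(1+|\beta|)$ factor is fine, but it must be tracked through every integration by parts, not just one radial one. To make your approach work you would need, at minimum, a classification of the degenerate configurations of the full phase (the analogue of Lemma \ref{st_points_lemma}) and a sharp analysis of the non-smooth origin region; the paper's change of variables is what renders both steps tractable.
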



\begin{rem}
Note that we obtain better estimates than in the case of the Zakharov-Kuznetsov's oscillatory integral, see e.g.  \cite{LP}. Compared to the results in \cite{LP}, when $\al\sim 1$ we gain almost one derivative instead of half a derivative.
\end{rem}

\begin{rem}
By making a suitable change of variables, we easily prove a dispersion estimate in the case of arbitrary negative energy $E<0$. We have
\be\label{Ineq_E}
| I (t,u;E)| \, \lesssim  \,  \frac{(1+|\bt|) |E|^{(\al-1)/8 -3\ve/2} }{ t^{ (\alpha + 3 )/4-\ve }  },
\ee
with constants independent of $E$. Indeed, given \eqref{I} with phase \eqref{st_phase}, we are reduced to the case $E=-1$ by performing the change of variables  $\xi= |E|^{1/2} \xi_0 $, $t_0= |E|^{3/2}t$, and $ z_0=|E|^{1/2} z$, which converts \eqref{I} into
\[
\int\limits_{ \mathbb{C} } |E|^{\al/2}| \xi _0|^{ \alpha }  e^{ i t_0 \tilde S( u_0, \xi_0;-1) } |E| d \Re \xi_0 \, d \Im \xi_0,\quad u_0 := \frac{z_0}{t_0}.
\]
Using \eqref{I_decrease} we are lead to
\bee
|I(t,u;E)|  & \lesssim  & \frac{  (1+|\bt|) |E|^{\frac{\al+2}{2}} }{ t_0^{\frac{\al+3}4 -\ve}} \\
& \sim & (1+|\bt|)  |E|^{\frac{\al+2}{2}}  \frac{ |E|^{-\frac{3(\al+3)}{8} -\frac32\ve}}{ t^{ \frac{\alpha + 3}4-\ve } } \\
&  \lesssim & (1+|\bt|) \frac{ |E|^{\frac{\al-1}8 -\frac 32\ve} }{ t^{ \frac{\alpha + 3}4 -\ve} },
\eee
as desired. Note that  in the formal limit $E\to 0$, estimate \eqref{Ineq_E} becomes singular.
\end{rem}
\medskip
Some useful consequences of Lemma \ref{asp_lin_estimate_lemma} are stated below.

\begin{cor}[Smoothing and Strichartz estimates]
Let $U(t)=U(t;E)$ be the associated $NV_-$ linear group, namely for $\xi =\xi_1 +i\xi_2$,
\[
U(t)v_0 :=  \int_{\R^2} e^{ i t \tilde S( u, \xi ,E) } \hat{v_0}(\xi_1,\xi_2)d\xi_1 d\xi_2,
\]
$($cf. \eqref{I} and \eqref{st_phase}$)$. Then for any $0\leq \al< 1$, $\bt\in [0,1]$, we have the smoothing decay estimate
\be\label{Dispersion}
\| |\partial_z|^{\al \bt} U(t)v_0 \|_{L^p_{x,y}} \lesssim_{\al,\bt}  \frac{ |E|^{\bt \big( \frac{\al-1}8- \frac32 \ve\big)} }{ t^{ \bt\big(\frac{\alpha + 3 }4 - \ve \big)} } \|v_0\|_{L^{p'}_{x,y}},
\ee
and the Strichartz estimate with gain of almost one-half derivative:
\be\label{Smoothing}
\| |\partial_z|^{\al \bt/2} U(t)v_0 \|_{L^q_tL^p_{x,y}} \lesssim_{\al,\bt}   |E|^{ \bt\big(\frac{\al-1}{8}-\frac32\ve\big)} \|v_0\|_{L^{2}_{x,y}},
\ee
with $p=\frac{2}{1-\beta}$, $\frac1p+\frac1{p'}=1$, and $\frac 2q= \bt \big( \frac{\alpha + 3}4 -\ve \big)$.
\end{cor}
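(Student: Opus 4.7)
The plan is to derive the dispersion estimate \eqref{Dispersion} by interpolating between the trivial $L^2\to L^2$ energy bound and the $L^1\to L^\infty$ kernel bound supplied by Lemma \ref{asp_lin_estimate_lemma}, and then deduce the Strichartz estimate \eqref{Smoothing} from \eqref{Dispersion} via a standard $TT^*$ argument combined with Hardy--Littlewood--Sobolev in the time variable.

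First I would identify the operator $|\partial_z|^{\gamma}U(t)$ (for any admissible exponent $\gamma$, possibly complex) as convolution in the space variable with a kernel that, after the rescaling $w=z-z'=tu$, coincides up to normalization with the oscillatory integral $I(t,u;E)$ of \eqref{I} with $|\xi|^{\gamma}$ as weight. At the endpoint $\beta=0$, this reduces to the Plancherel statement that $U(t)$ is unitary on $L^2$. At the endpoint $\beta=1$ the kernel representation combined with Lemma \ref{asp_lin_estimate_lemma} in the form \eqref{Ineq_E} gives the uniform bound $|K_t(w)|\lesssim |E|^{(\alpha-1)/8-3\varepsilon/2}\,t^{-(\alpha+3)/4+\varepsilon}$, hence the $L^1\to L^\infty$ estimate by Young's inequality. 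For intermediate $\beta\in(0,1)$ I would apply Stein's complex interpolation theorem to the analytic family $T_s:=e^{s^2}|\partial_z|^{\alpha s}U(t)$ on the strip $0\le \re s\le 1$. On $\re s=0$ the operator $|\partial_z|^{i\alpha\tau}$ is an isometry on $L^2$, so $\|T_{i\tau}\|_{L^2\to L^2}\le e^{-\tau^2}$; on $\re s=1$ Lemma \ref{asp_lin_estimate_lemma} applies with imaginary parameter $\alpha\tau$, and its $1+|\tau|$ growth is absorbed by the Gaussian weight. The Riesz--Thorin line $p=2/(1-\beta)$, $p'$ conjugate, then produces exactly \eqref{Dispersion}.

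For the Strichartz estimate, set $Tv_0(t,z):=|\partial_z|^{\alpha\beta/2}U(t)v_0(z)$. Since $|\partial_z|^{\alpha\beta/2}$ is a Fourier multiplier that commutes with the group, a direct computation gives
\[
TT^*g(t,\cdot)=\int_{\R}|\partial_z|^{\alpha\beta}U(t-s)\,g(s,\cdot)\,ds.
\]
Inserting the dispersion bound \eqref{Dispersion} pointwise in $s$ with $p=2/(1-\beta)$ and using Minkowski reduces matters to an integral operator with kernel $|t-s|^{-\lambda}$, $\lambda:=\beta\bigl(\tfrac{\alpha+3}{4}-\varepsilon\bigr)$. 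The Hardy--Littlewood--Sobolev inequality with exponent $1/q'-1/q=1-\lambda$, i.e. $2/q=\lambda$, yields $\|TT^*g\|_{L^q_tL^p_{x,y}}\lesssim|E|^{2\beta\bigl(\tfrac{\alpha-1}{8}-\tfrac{3\varepsilon}{2}\bigr)}\|g\|_{L^{q'}_tL^{p'}_{x,y}}$, and the identity $\|T\|^2=\|TT^*\|$ turns this into \eqref{Smoothing}.

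The only delicate point is ensuring admissibility of the Stein interpolation in the face of the $1+|\bt|$ growth in Lemma \ref{asp_lin_estimate_lemma}; this is a routine issue handled by the Gaussian factor $e^{s^2}$ in the analytic family. A further check, automatic here, is that the HLS step stays in the non-endpoint regime: the strict bound $\alpha<1$ and the loss $\varepsilon>0$ inherited from the dispersion estimate force $\lambda<1$, so that $q>2$ and no Keel--Tao-type endpoint machinery is required.
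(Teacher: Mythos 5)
Your proposal is correct and follows essentially the same route as the paper: \eqref{Dispersion} is obtained by complex (Stein/Riesz--Thorin) interpolation between the unitary $L^2$ bound for imaginary powers and the $L^1\to L^\infty$ bound coming from Lemma \ref{asp_lin_estimate_lemma} in the form \eqref{Ineq_E} together with Young's inequality, and \eqref{Smoothing} follows by the dual/$TT^*$ argument with Hardy--Littlewood--Sobolev in the time variable, exactly as in the paper. The only differences are cosmetic: your explicit Gaussian factor absorbing the $(1+|\bt|)$ growth (handled implicitly in the paper), and your claim of a squared power $|E|^{2\bt(\frac{\al-1}{8}-\frac32\ve)}$ for $TT^*$, whereas applying \eqref{Dispersion} inside the kernel produces a single such factor --- the same bookkeeping looseness already present in the paper's own duality computation.
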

\begin{proof}
The proofs  are by now standard, but we prove it for the sake of completeness. See e.g. \cite{SW,KPV_Indiana} and references therein for detailed proofs.

\medskip

First,  we prove \eqref{Dispersion}. This is just a consequence of the standard complex interpolation theorem. We have from \eqref{Ineq_E} and Young's inequality,
\[
\| |\partial_z|^{\al +i\bt} U(t)v_0 \|_{L^\infty}  \lesssim  \, \frac{ (1+|\bt|) |E|^{\frac{\al-1}8 -\frac32\ve} }{ t^{ \frac{\alpha + 3}4 -\ve } } \|v_0\|_{L^1},
\]
for any $0\leq \al<1$ and $\bt\in \mathbb{R}$. We interpolate against the trivial estimate
\[
\| |\partial_z|^{i\bt} U(t)v_0 \|_{L^2} = \|v_0\|_{L^2},
\]
to get \eqref{Dispersion}.
\medskip

Now we prove \eqref{Smoothing}. By duality, we are lead to prove that
\[
\int {\phi}(t,x,y)  |\partial_z|^{\al \bt/2} U(t)v_0 \, dtdxdy \lesssim  |E|^{\bt \big(\frac{\al-1}{8} -\frac32 \ve\big)} \|v_0\|_{L^2} \|\phi\|_{L^{q'}_t L_{x,y}^{p'}},
\]
for all $\phi\in C^\infty_0(\R^3)$ and with $ \frac1q + \frac1{q'} = 1 $. However,
\bee
\int {\phi}(t,x,y)  |\partial_z|^{\al \bt/2} U(t)v_0  \, dtdxdy & =&  \int v_0 \Big[ \int {  |\partial_z|^{\al \bt/2}  U(-t)\phi(t,x,y)}  dt \Big]dxdy\\
&\leq &  \|v_0\|_{L^2} \norm{ \int {  |\partial_z|^{\al \bt/2}  U(-t) {\phi(t)}}  dt}_{L^2}.
\eee
Now,
\bee
 \norm{ \int {  |\partial_z|^{\al \bt/2}  U(-t)\phi(t)}  dt}_{L^2}^2 &=& \int_t \!\int_{t'} \!\int_{x,y} {  |\partial_z|^{\al \bt/2}  U(-t')\phi(t',x,y)}    |\partial_z|^{\al \bt/2}  U(-t)\phi(t,x,y) dxdy dt' dt  \\
 & =&  \int_{x,y} \!\int_{t} \!\int_{t'}   |\partial_z|^{\al \bt}  U(t'-t)\phi(t',x,y) \phi(t,x,y)  dt' dt dxdy \\
 & \leq & \norm{ \int_{t'}   |\partial_z|^{\al \bt}  U(t'-\cdot )\phi(t')dt' }_{L^q_t L^p_{x,y}} \|\phi\|_{L^{q'}_t L^{p'}_{x,y}}.
\eee
On the other hand, we have from \eqref{Dispersion} and the Hardy-Littlewood-Sobolev's inequality,
\bee
 \norm{ \int_{t'}   |\partial_z|^{\al \bt}  U(t'-\cdot )\phi(t')dt' }_{L^q_t L^p_{x,y}} & \leq &   \norm{ \int_{t'} \norm{  |\partial_z|^{\al \bt}  U(t' - \cdot)\phi(t') }_{L^p_{x,y}} dt' }_{L^q_t}\\
 & \lesssim &  |E|^{\bt\big(\frac{\al-1}{8} -\frac32\ve\big)} \norm{ \int_{ t' } \frac{  \norm{ \phi(t') }_{L^{p'}_{x,y}} }{ |t-t'|^{ \bt \big(\frac{\alpha + 3}4 -  \ve\big)} } dt' }_{L^q_t}\\
& \lesssim &   |E|^{\bt \big(\frac{\al-1}{8}-\frac32\ve\big)} \norm{ \phi }_{L^{q'}_t L^{p'}_{x,y} }.
\eee
\end{proof}

Another set of standard but useful estimates is the following.
\begin{cor}
We have
\be\label{Smoothing1}
\| U(t)v_0 \|_{L^{4}_{t,x,y}} \lesssim  |E|^{-1/16^+} \| U(t)v_0\|_{X^{0,\frac7{16}+}},
\ee
and
\be\label{Smoothing2}
\| |\partial_z|^{1/4^-}U(t)v_0 \|_{L^{4}_{t,x,y}} \lesssim |E|^{-0^+} \| U(t)v_0\|_{X^{0,\frac12^-}}
\ee
The constant in the last inequality becomes singular as $\frac14^-$ approaches $\frac14.$
\end{cor}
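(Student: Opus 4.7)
Both inequalities follow from the diagonal case of the Strichartz estimate \eqref{Smoothing}, combined with the standard Bourgain transfer principle and one final complex interpolation against the trivial identity $\|u\|_{L^2_{t,x,y}} = \|u\|_{X^{0,0}}$. The interpolation step is necessary because the strict constraint $\al < 1$ in \eqref{Smoothing} prevents one from reaching the diagonal $L^4_{t,x,y}$ Strichartz directly, and because the transfer principle only produces $b > 1/2$, while both target indices $b = 7/16^+$ and $b = 1/2^-$ sit strictly below $1/2$.

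The first step is to impose $p = q$ in \eqref{Smoothing}, which forces $\bt = 4/(\al + 7 - 4\ve)$ and yields the one-parameter family
\[
\big\| |\partial_z|^{\al\bt/2} U(t) v_0 \big\|_{L^{q(\al)}_{t,x,y}} \lesssim |E|^{\bt(\al-1)/8 + O(\ve)} \|v_0\|_{L^2}, \qquad q(\al) = \frac{2(\al + 7 - 4\ve)}{\al + 3 - 4\ve}.
\]
Two cases are of interest: $\al = 0$ yields $q = 14/3^+$, no derivative gain, and constant $|E|^{-1/14^-}$; while $\al \to 1^-$ yields $q = 4^+$, derivative gain $|\partial_z|^{1/4^-}$, and constant $|E|^{-0^-}$. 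Step two is the classical Bourgain transfer: writing $u(t) = \int e^{it\mu}U(t) g_\mu\,d\mu$ with $\widehat{g_\mu}(\xi) = \hat u(\mu + S(\xi),\xi)$, inserting the diagonal Strichartz bound inside the $\mu$-integral and closing with Cauchy--Schwarz in $\mu$ (which needs $\langle\mu\rangle^{-b}\in L^2_\mu$, i.e.\ $b > 1/2$), one promotes each diagonal Strichartz to
\[
\big\| |\partial_z|^{\al\bt/2} u \big\|_{L^{q(\al)}_{t,x,y}} \lesssim |E|^{\bt(\al-1)/8 + O(\ve)} \|u\|_{X^{0, 1/2^+}}.
\]

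For step three, Stein analytic interpolation on the family $T_z := |\partial_z|^{z\al\bt/2}$ between the above estimate and the trivial identity $\|u\|_{L^2_{t,x,y}} = \|u\|_{X^{0,0}}$ produces, for any $\theta \in [0,1]$,
\[
\big\| |\partial_z|^{\theta\al\bt/2} u \big\|_{L^{q_\theta}_{t,x,y}} \lesssim |E|^{\theta\bt(\al-1)/8 + O(\ve)} \|u\|_{X^{0, \theta/2^+}}, \qquad \frac{1}{q_\theta} = \frac{1-\theta}{2} + \frac{\theta}{q(\al)}.
\]
For \eqref{Smoothing1} I use the $\al = 0$ branch with $\theta = 7/8 + O(\ve)$, which gives $q_\theta = 4$, $b = 7/16^+$, and $|E|$-exponent $-7/(8\cdot 14) - O(\ve) = -1/16^-$, matching the stated $|E|^{-1/16^+}$ in the loss convention. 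For \eqref{Smoothing2} I use the $\al = 1^-$ branch with $\theta$ slightly less than $1$, so that $q_\theta = 4$, the derivative index $\theta\al\bt/2 = 1/4^-$, $b = 1/2^-$, and the $|E|$-exponent is $0^-$, as stated.

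The principal subtlety is the simultaneous book-keeping of the small parameters $(1-\al,\,1-\theta,\,\ve$, and the various implicit ``$+$'' symbols$)$ so that as they all tend to zero the final exponents on $L^4_{t,x,y}$, on $X^{0,b}$, on $|\partial_z|$ and on $|E|$ land at their stated values from the allowable side. Conceptually the proof is driven by the tension between the $b > 1/2$ constraint in the Bourgain transfer (traded for $L^q$-integrability in Step 3) and the $\al < 1$ constraint in \eqref{Smoothing} (which prevents hitting $p = q = 4$ already in Step 1); beyond this, all ingredients are classical.
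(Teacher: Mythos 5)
Your proposal is correct and follows essentially the same route as the paper: specialize \eqref{Smoothing} to the diagonal $p=q$ (with $\al=0$, $\bt=\frac47^+$ giving $L^{14/3^+}$ for \eqref{Smoothing1}, and $\al=1^-$, $\bt=\frac12^+$ giving $|\partial_z|^{1/4^-}$ in $L^{4^+}$ for \eqref{Smoothing2}), transfer to $X^{0,\frac12+}$ via the standard Bourgain/Ginibre lemma, and interpolate against the trivial $L^2_{t,x,y}=X^{0,0}$ identity with $\theta\approx\frac78$ and $\theta\approx 1$ respectively. Your bookkeeping of the $\ve$-losses (including the sign conventions on the $|E|$-exponents) and your explicit Stein-interpolation formulation only make precise what the paper leaves implicit, so there is nothing to add.
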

\begin{proof}
Put $\al=0$, $\bt= \big( \frac74-\ve\big)^{-1} =\frac47^+$ in \eqref{Smoothing}, we get
\[
\| U(t)v_0 \|_{L^{r}_{t,x,y}} \lesssim |E|^{ \gamma}  \|v_0\|_{L^{2}_{x,y}},
\]
with $ r = \frac{ 14 }{ 3 } \left( \frac{ 1 - \frac{ 4 }{ 7 } \ve }{ 1 - \frac{ 4 }{ 3 } \ve } \right) = \frac{ 14 }{ 3 }^+ $ and $ \gamma = - \frac{ 1 }{ 14 } \left(\frac{ 1- 12 \ve }{ 1 - \frac{ 4 }{ 7 } \ve } \right) =  -\frac{ 1 }{ 14 }^+ $.

From here we have (see e.g. \cite[Lemma 3.3]{G})
\[
\| U(t)v_0 \|_{L^{r}_{t,x,y}} \lesssim  |E|^{\gamma}  \| U(t)v_0 \|_{X^{0,\frac12+}},
\]
By interpolation with the trivial estimate for $\| v \|_{L^{2}_{t,x,y}}$, using the interpolation parameter $\theta = \frac{ 7 }{ 8 }( 1 - \frac{ 4 }{ 7 } ) = \frac{ 7 }{ 8 }^-$ and $p_\theta^{-1} =(1-\theta)p_0^{-1} +\theta p_1^{-1}$, $0<p_0<p_1$, we obtain \eqref{Smoothing1}.

\medskip

Now we deal with \eqref{Smoothing2}. Taking $\al=1- 16\ve$ and $\bt=\frac{1}{2-5\ve}=\frac12^+$ in \eqref{Smoothing} we have $p=q =4 \frac{1-\frac52 \ve}{1-5\ve} =4^+$. We also have $\frac12\al\bt = \frac{1-16\ve}{4(1-\frac52\ve)} =\frac14^-$ and
\[
\| |\partial_z|^{\frac14^-} U(t)v_0 \|_{L^{r}_{t,x,y}} \lesssim |E|^{-0^+} \|v_0\|_{L^{2}_{x,y}},
\]
with $ r = 4 \frac{1-\frac52 \ve}{1-5\ve} $, from where
\[
\| |\partial_z|^{\frac14^-} U(t)v_0 \|_{L^{r}_{t,x,y}} \lesssim  |E|^{-0^+}   \| U(t)v_0 \|_{X^{0,\frac12+}}.
\]
We interpolate with $L^2_{t,x,y}$ with $\theta= 1 - \frac52 \ve $ to obtain
\[
\| |\partial_z|^{1/4^-} U(t)v_0 \|_{L^{4}_{t,x,y}} \lesssim  |E|^{-0^+} \| U(t)v_0 \|_{X^{0, \frac12^-}}.
\]
\end{proof}

In the next Section we prove Lemma \ref{asp_lin_estimate_lemma}.

\section{Proof of Lemma \ref{asp_lin_estimate_lemma}}

\medskip

\subsection{Restatement of the problem} This section is devoted to the proof of the pointwise dispersion estimate for nonzero energies. Compared to previous works (see e.g. \cite{Saut,LP}), we estimate the whole two dimensional integral. We start by performing the following change of variables\footnote{Recall that we work with $E=-1$. In the case of general $E<0$, the right change of variables is given by
\[
\xi = - i \sqrt{|E|}\left( \lambda - \frac{ 1 }{ \overline \lambda } \right), \quad \bar \xi = i \sqrt{|E|}\left( \bar \lambda - \frac{ 1 }{ \lambda } \right).
\]
Note that this formulation breaks down when $E$ approaches the zero energy level.}:
\begin{equation}\label{ch_var}
\xi = - i \left( \lambda - \frac{ 1 }{ \overline \lambda } \right), \quad \bar \xi = i \left( \bar \lambda - \frac{ 1 }{ \lambda } \right).
\end{equation}
Denote
\[
\varphi( \lambda ) := - i \left( \lambda - \frac{ 1 }{ \overline \lambda } \right) =\xi.
\]
Note that
\begin{equation*}
\varphi \colon \{ \la \in \Com \ : \ | \lambda | > 1 \} \longrightarrow \{ \xi \in \Com^2 \ : \  \xi \neq 0 \}
\end{equation*}
is a bijective, smooth map. Note also that the Jacobian determinant of this transformation satisfies
\be\label{Jacobian}
\frac{ D( \xi, \bar \xi ) }{ D( \lambda, \overline \lambda ) } = 1 - \frac{ 1 }{ |\lambda|^4 } =\frac{|\la|^4-1}{|\la|^4} .
\ee
Note also that
\be\label{power_alpha}
|\xi|^\al = \abs{\lambda - \frac{ 1 }{ \overline \lambda}}^\al = \frac{|\la\bar \la-1|^\al}{|\la|^\al}.
\ee
Finally we remark the almost trivial fact (but very important in the following computations): one has
\be\label{trivial}
\frac{ \bar \lambda }{ \lambda } = - \frac{ \bar \xi }{ \xi }.
\ee
From \eqref{st_phase}, \eqref{ch_var} and \eqref{trivial}, we have
\bee
i  \widetilde S( u, \xi ,-1 ) & =& i\left( ( \xi^3 + \bar \xi^3 )\left( 1 + \frac{ 3 }{ |\xi|^2 } \right) + \frac{ 1 }{ 2 }( \xi  \bar u +  \bar \xi u) \right) \nonu\\
& =& i\left( \xi^3 + \bar \xi^3 + 3 \xi \frac{\xi}{\overline{\xi}} +3 \bar \xi \frac{\bar \xi}{\xi} \right)  + \frac{ 1 }{ 2 } \left( \left( \lambda - \frac{ 1 }{ \overline \lambda } \right) \bar u - \left( \bar \lambda - \frac{ 1 }{ \lambda } \right) u \right)  \nonu\\
&= & i\left( i\Big(  \lambda^3 - 3 \frac{\la^2}{\overline \la} + 3\frac{\la}{ \overline \la^2}   - \frac{1}{\overline \la^3} \Big) -  i\Big(  \bar \lambda^3 - 3 \frac{\bar \la^2}{\la} + 3\frac{\bar \la}{\la^2}   - \frac{1}{\la^3} \Big) \right) \\
& & + i\left(  3i \left( \lambda - \frac{ 1 }{ \overline \lambda } \right) \frac{\la}{\overline{\la}}   - 3i \left( \bar \lambda - \frac{ 1 }{ \lambda } \right)  \frac{\bar \la}{\la} \right) \nonu\\
& &  + \frac{ 1 }{ 2 } \left( \left( \lambda - \frac{ 1 }{ \overline \lambda } \right) \bar u - \left( \bar \lambda - \frac{ 1 }{ \lambda } \right) u \right)
\eee
Rearranging similar terms, we obtain
\bee
i  \widetilde S( u, \xi ,-1 ) & =& -\left( \lambda^3 - 3 \frac{\la^2}{\overline \la} + 3\frac{\la}{ \overline \la^2}   - \frac{1}{\overline \la^3}  -  \bar \lambda^3 + 3 \frac{\bar \la^2}{\la} - 3\frac{\bar \la}{\la^2}   + \frac{1}{\la^3}  \right) \\
& & - \left(  3 \frac{\la^2}{\overline{\la}}  - 3\frac{\la }{ \overline \lambda^2}    - 3 \frac{\bar \la^2}{\la}  +3 \frac{\bar \la}{\la^2}   \right) \nonu\\
& &  + \frac{ 1 }{ 2 } \left( \left( \lambda - \frac{ 1 }{ \overline \lambda } \right) \bar u - \left( \bar \lambda - \frac{ 1 }{ \lambda } \right) u \right)  \nonu  \\
& =&  - \left( \lambda^3 + \frac{ 1 }{ \lambda^3 } - \overline \lambda^3 - \frac{ 1 }{ \overline \lambda^3 } \right) + \frac{ 1 }{ 2 } \left( \left( \lambda - \frac{ 1 }{ \overline \lambda } \right) \bar u - \left( \bar \lambda - \frac{ 1 }{ \lambda } \right) u \right).
\eee
This last expression will be important for us. We denote it
\be\label{S_u}
 S( u, \lambda) :=  - \left( \lambda^3 + \frac{ 1 }{ \lambda^3 } - \overline \lambda^3 - \frac{ 1 }{ \overline \lambda^3 } \right) + \frac{ 1 }{ 2 } \left( \left( \lambda - \frac{ 1 }{ \overline \lambda } \right) \bar u - \left( \bar \lambda - \frac{ 1 }{ \lambda } \right) u \right).
\ee
Compared with $ \widetilde S( u, \xi ,-1 )$, in the new expression $ S( u, \lambda)$ \emph{the terms in $\la$ and in $\bar \la$ are decoupled}. The advantage of this fact is that in terms of $\la$ the stationary points of the oscillatory integral $I(t,u;-1)$ satisfy an algebraic equation of only one complex variable (see \eqref{asp_stationary}), while  in terms of $\xi$ the stationary points are defined by a system of two coupled algebraic equations (of two real variables).

\medskip

Moreover, note that $S( u, \lambda)$ is always a pure imaginary number, since it is the difference between a complex number and its complex conjugate.

\medskip

Thus, replacing \eqref{Jacobian}, \eqref{power_alpha} and \eqref{S_u} in \eqref{I}, we are brought to estimate the following integral
\begin{equation}
\label{new_type}
I (t,u;-1)= \iint\limits_{ \mathbb{C} \backslash B_1( 0 ) } \frac{ | \lambda \bar \lambda - 1 |^{ \alpha +i\bt } ( | \lambda |^4 - 1 ) }{ | \lambda |^{ \alpha + 4 +i\bt} } e^{ t S( u, \lambda ) } d \Re \lambda d \Im \lambda,
\end{equation}
where $B_1(0)$ is the open ball centered at zero, of radius one. We also remark that estimates of integrals of type (\ref{new_type}) have been obtained in \cite{KN, K}, see also previous works \cite{KPV_Indiana,KPV} concerning the KdV equation in one dimension. However, this time we need an improved estimate because we need a suitable gain of derivatives in order to close the iteration argument.

\subsection{Study of stationary points}

We will start by introducing the following parametrically defined sets of the complex plane  (see Fig. \ref{u_curve_figure}). Let
\begin{equation*}
\mathcal{U} := \{ u\in \Com \ : \  u = 6 ( 2 e^{ - i \varphi } + e^{ 2 i \varphi } ), \; \varphi \in [ 0, 2 \pi ) \},
\end{equation*}
and
\begin{equation*}
\mathbb{U} := \{ u\in \Com \ : \  u =  6t  (2 e^{ - i \varphi } + e^{ 2 i \varphi }), \, t\in [0,1]  \; \varphi \in [ 0, 2 \pi ) \},
\end{equation*}
be the (closed) region enclosed by the curve $ \mathcal{U} $. These sets will be essential to understand the stationary points of the phase function $S(u,\la)$.
\begin{figure}[!h]
\begin{center}
\includegraphics[width=100mm]{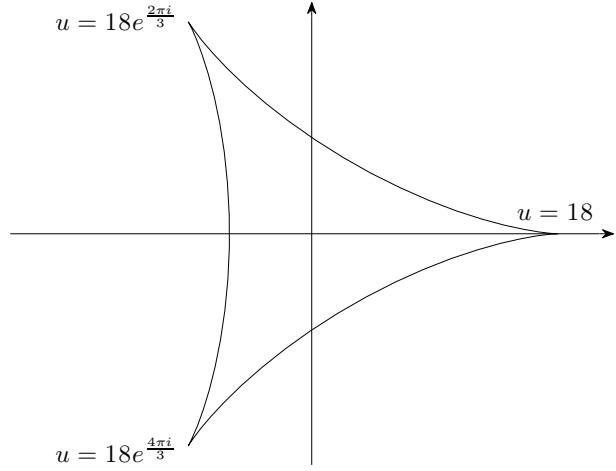}
\caption{The curve $ \mathcal{U} $ and its enclosed region $\mathbb U$ in the complex plane. Note that the $\mathcal U$ (and its interior and exterior) is invariant with respect to the transformations $z \mapsto \bar z$ and $z \mapsto ze^{2ik\pi/3}$, $k=0,1,2$.}
\label{u_curve_figure}
\end{center}
\end{figure}

With these definitions in mind, let us describe the properties of the stationary points of the function $ S( u, \lambda ) $ defined in \eqref{S_u}. These points satisfy the equation\footnote{Here the symbol $\overset{!}{=}$ means that equality to zero is satisfied for stationary points only.}
(here $S_\la$ stands for the partial derivative with respect to $\la$)
\begin{equation}
\label{asp_stationary}
S_{ \lambda } = \frac{ \bar u }{ 2 } - \frac{ u }{ 2 \lambda^2 } - 3 \lambda^2 + \frac{ 3 }{ \lambda^4 } \overset{!}{=}  0.
\end{equation}
Additionally, the degenerate stationary points obey the equation
\begin{equation}
\label{asp_degenerate}
S_{ \lambda \lambda } = \frac{ u }{ \lambda^3 } - 6 \lambda - \frac{ 12 }{ \lambda^5 } \overset{!}{=} 0.
\end{equation}
We denote $ \zeta = \lambda^2 $, and
\begin{equation*}
Q( u, \zeta ) = \frac{ \bar u }{ 2 } - \frac{ u }{ 2 \zeta } - 3 \zeta + \frac{ 3 }{ \zeta^2 }.
\end{equation*}
(Compare with \eqref{asp_stationary}.) Clearly, for each $ \zeta =\zeta(u)$, root of the function $ Q( u, \zeta ) $, there are two corresponding stationary points of $ S( u, \lambda ) $, given by $ \lambda = \pm \sqrt{ \zeta } $.
Since $Q( u, \zeta ) $ has only three roots counting multiplicity (say $\la_0^2(u)$, $\la_1^2(u)$ and $\la_2^2(u)$), in terms of the variable $\la^2$, the function $ S_{ \lambda }( u, \lambda ) $ can be represented in the following compact form
\begin{equation}
\label{asp_sprime_representation}
S_{ \lambda }( u, \lambda ) = - \frac{ 3 }{ \lambda^4 } ( \lambda^2 - \lambda_0^2( u ) ) ( \lambda^2 - \lambda_1^2( u ) ) ( \lambda^2 - \lambda_2^2( u ) ).
\end{equation}
Concerning the behavior of the roots $\la_j(u)$, defined in \eqref{asp_sprime_representation}, we have the following result, see \cite{KN} for a proof.\footnote{Note that in \cite{KN} the parametrization of the set $\mathbb U$ was slightly different, here we give a more precise one.}

\begin{lem}[Description of stationary points, \cite{KN}]\label{st_points_lemma}
\rule{1pt}{0pt}
Assume that $u\in \Com$ is a fixed parameter. Then the following are satisfied.
\begin{enumerate}
\item\label{1} If $ u = 18 e^{ \frac{ 2 \pi i k }{ 3 } } $, $ k = 0, 1, 2 $ (see the vertices of $\mathcal U$ in Fig. \ref{asp_sprime_representation}), then
\begin{equation*}
\lambda_0( u ) = \lambda_1( u ) = \lambda_2( u ) = e^{ -\frac{ \pi i k }{ 3 } },
\end{equation*}
and $ S( u, \lambda ) $ has two degenerate stationary points, corresponding to a third-order root of the function $ Q( u, \zeta ) $, $ \zeta_1 = e^{ -\frac{ 2 \pi i k }{ 3 } } $.

\item\label{2} If $ u \in \mathcal{U} $ $($i.e. $ u = 6( 2 e^{ - i \varphi } + e^{ 2 i \varphi } )$$)$ and $ u \neq 18 e^{ \frac{ 2 \pi i k }{ 3 } } $,  for $ k = 0, 1, 2 $, then \begin{equation*}
\lambda_0( u ) = \lambda_1( u ) = e^{ i \varphi / 2 }, \quad \lambda_2( u ) = e^{ - i \varphi }.
\end{equation*}

Thus $ S( u, \lambda ) $ has two degenerate stationary points, corresponding to a second-order root of the function $ Q( u, \zeta ) $, $ \zeta_1 = e^{ i \varphi } $, and two non--degenerate stationary points corresponding to a first-order root, $ \zeta_2 = e^{ - 2 i \varphi } $.

\item\label{3} If $ u \in \interior  ~ \mathbb{U} $, then
\begin{equation*}
\lambda_i( u ) = e^{ i \varphi_i }, \quad \text{and} \quad \lambda_i( u ) \neq \lambda_j( u ) \quad \text{for} \quad i \neq j.
\end{equation*}
In this case the stationary points of $ S( u, \lambda ) $ are non-degenerate and correspond to the square roots of the roots of the function $ Q( u, \zeta ) $ with absolute value equals 1.

\item\label{4} Finally, if $ u \in \mathbb{C} \backslash \mathbb{U} $, then
\begin{equation*}
\lambda_0( u ) = ( 1 + \omega ) e^{ i \varphi / 2 }, \quad \lambda_1( u ) = e^{ - i \varphi }, \quad  \lambda_2( u ) = ( 1 + \omega )^{ -1 } e^{ i \varphi / 2 },
\end{equation*}
for certain $ \varphi \in \R$ and $ \omega > 0 $.

In this case the stationary points of the function $ S( u, \lambda ) $ are non-degenerate, and correspond to the roots of the function $ Q( u, \zeta ) $ that can be expressed as $ \zeta_0 = ( 1 + \tau ) e^{ i \varphi } $, $ \zeta_1 = e^{ - 2 i \varphi } $, $ \zeta_2 = ( 1 + \tau )^{ - 1 } e^{ i \varphi } $, and $ ( 1 + \tau ) = ( 1 + \omega )^2 $.

\end{enumerate}
\end{lem}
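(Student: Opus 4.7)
The plan is to reduce the stationary system $S_\lambda=0$ to a single cubic in $\zeta=\lambda^2$ and classify the root configurations via a self-reciprocal involution. Clearing denominators in $Q(u,\zeta)=0$ produces
\[
P(\zeta) := 6\zeta^3 - \bar u\, \zeta^2 + u\, \zeta - 6,
\]
with Vieta relations $\zeta_0\zeta_1\zeta_2=1$, $\zeta_0+\zeta_1+\zeta_2=\bar u/6$ and $\zeta_0\zeta_1+\zeta_0\zeta_2+\zeta_1\zeta_2=u/6$. A direct check yields the symmetry $P(\zeta) = -\zeta^{3}\,\overline{P(1/\bar\zeta)}$, so the involution $\zeta\mapsto 1/\bar\zeta$ permutes the roots. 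Since $P$ has odd degree, at least one root is fixed by the involution, and is therefore on the unit circle. Together with $\zeta_0\zeta_1\zeta_2=1$, this leaves two possibilities for the remaining two roots: \textbf{(A)} both on the unit circle, or \textbf{(B)} a conjugate-inverse pair $(\rho e^{i\theta},\rho^{-1}e^{i\theta})$ with $\rho\neq 1$.

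The same involution argument locates the multiple-root set: if $P$ had a multiple root $\zeta_*$ with $|\zeta_*|\neq 1$, then $1/\bar\zeta_*$ would also be a multiple root, forcing $P$ to have at least four roots counting multiplicity, a contradiction. So all multiple roots lie on the unit circle. Writing a double root as $\zeta_0=\zeta_1=e^{i\varphi}$ and using $\zeta_0\zeta_1\zeta_2=1$ gives $\zeta_2=e^{-2i\varphi}$, and the Vieta sum then forces
\[
u = 6\bigl(2e^{-i\varphi}+e^{2i\varphi}\bigr)\in\mathcal{U}.
\]
A triple root imposes the additional equality $e^{i\varphi}=e^{-2i\varphi}$, giving $\varphi = -2\pi k/3$ and $u = 18\,e^{2\pi i k/3}$ for $k=0,1,2$. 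These computations produce parts $(1)$ and $(2)$ of the lemma, with the appropriate choice of branch $\lambda_j=\sqrt{\zeta_j}$.

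For the two open components $\mathbb{U}\setminus\mathcal{U}$ and $\Com\setminus\mathbb{U}$ of $\Com\setminus\mathcal{U}$ I argue by continuity of the roots together with a reference point in each. At $u=0$, the cubic becomes $P(\zeta)=6(\zeta^3-1)$ with roots $1,e^{2i\pi/3},e^{4i\pi/3}$, all on the unit circle, placing $\mathbb{U}\setminus\mathcal{U}$ in Case (A); this yields part $(3)$. For a reference point outside $\mathbb{U}$ one checks directly that exactly one root lies on the circle and the other two form a reciprocal pair with $\rho\neq 1$, placing $\Com\setminus\mathbb{U}$ in Case (B); the explicit parametrization of part $(4)$ is then obtained by writing $\zeta_0=(1+\tau)e^{i\varphi}$, $\zeta_2=(1+\tau)^{-1}e^{i\varphi}$, $\zeta_1=e^{-2i\varphi}$ (the common argument $\varphi$ is forced by $\zeta_0\zeta_2 = 1/\zeta_1 = e^{2i\varphi}$), and setting $1+\tau=(1+\omega)^2$ so that $\lambda_j=\sqrt{\zeta_j}$ matches the stated form.

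Finally, the passage from roots of $P$ back to stationary points of $S$ is automatic: from $S_\lambda = -\tfrac{3}{\lambda^4}\prod_{i=0}^{2}(\lambda^2-\zeta_i)$ the degeneracy order of a stationary point $\lambda$ of $S$ equals the multiplicity of $\zeta=\lambda^2$ as a root of $P$, so the classification of root multiplicities transfers directly. The main technical obstacle in this scheme is the combination of the involution argument with the Vieta formulas that forces the multiple-root locus to be exactly $\mathcal{U}$, together with verifying that the two open components $\mathbb{U}\setminus\mathcal{U}$ and $\Com\setminus\mathbb{U}$ indeed correspond to Cases (A) and (B); once those two geometric identifications are clean, the remaining case analysis is short and purely algebraic.
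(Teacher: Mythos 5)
Your proposal is correct in substance, but note that the paper itself does not prove this lemma: it simply cites \cite{KN} for the proof, so there is no internal argument to compare against line by line. Your self-contained route is sound: clearing denominators in $Q(u,\zeta)=0$ does give $P(\zeta)=6\zeta^3-\bar u\zeta^2+u\zeta-6$, the identity $P(\zeta)=-\zeta^3\overline{P(1/\bar\zeta)}$ holds, and since the involution $\zeta\mapsto 1/\bar\zeta$ preserves root multiplicities, at least one root lies on the unit circle and the remaining two are either both unimodular or form a pair $\rho e^{i\theta},\rho^{-1}e^{i\theta}$; multiple roots must be unimodular, and the Vieta relations ($\zeta_0\zeta_1\zeta_2=1$, $\zeta_0+\zeta_1+\zeta_2=\bar u/6$) then pin the double-root locus to $\mathcal U$ and the triple roots to $u=18e^{2\pi ik/3}$, exactly as in items (\ref{1})--(\ref{2}); the transfer of multiplicities to degeneracy of stationary points of $S$ via \eqref{asp_sprime_representation} is also correct since $\lambda\neq 0$. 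Three small points deserve to be made explicit to make the sketch airtight: (i) the converse direction of item (\ref{2}) --- that every $u=6(2e^{-i\varphi}+e^{2i\varphi})$ actually produces the double root $e^{i\varphi}$ and simple root $e^{-2i\varphi}$ --- follows in one line because the multiset $\{e^{i\varphi},e^{i\varphi},e^{-2i\varphi}\}$ has elementary symmetric functions $\bar u/6$, $u/6$, $1$, hence is the root multiset of $P$; (ii) the continuity argument needs the (easy) observation that both configuration types are open conditions on the components of $\Com\backslash\mathcal U$ where the roots are simple (if a simple unimodular root drifted off the circle, its inversive partner would be a second nearby root, contradicting simplicity), so the type is locally constant and connectedness applies; (iii) the exterior reference point should be exhibited, e.g.\ real $u>18$, where $P(\zeta)=(\zeta-1)\bigl(6\zeta^2+(6-u)\zeta+6\bigr)$ has the unimodular root $1$ and a real reciprocal pair $\rho,\rho^{-1}$ with $\rho\neq1$, confirming that $\Com\backslash\mathbb U$ carries the configuration of item (\ref{4}). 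With these routine verifications added, your argument is a complete and independent proof of the lemma.
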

Note that Lemma \ref{st_points_lemma} describes the behavior of stationary points with respect to the curve $\mathcal U$. Note that when estimating \eqref{new_type}, one needs to take into account that $I$ contains a multiplier that vanishes on the unit circle and grows at infinity. The neighborhoods of $|\la|=1$ and infinity being special regions, we will need to regroup the four cases above in three different ones, as expressed below.

\subsection{Idea of the proof}
Consider the integral $I=I(t,u,-1)$ in \eqref{new_type} expressed as follows:
\begin{equation}
\label{asp_more_complex_integral}
I=I(t,u,-1) = \int\limits_{ \mathbb{C} \backslash B_1( 0 ) } f( \lambda ) \exp( t S( u, \lambda ) ) d \Re \lambda \, d \Im \lambda,
\end{equation}
where $ f $ is a given function that vanishes on $ \partial B_1( 0 ) $, the boundary of the unit ball centered at the origin.

In what follows we suppress the differential $d \Re \lambda d \Im \lambda$ unless it is explicitly specified. We also anticipate that  $f$ is given by the expression
\begin{equation}\label{f}
f( \lambda ) = \frac{ | | \lambda |^2 - 1 |^{ \alpha +i\bt } ( | \lambda |^4 - 1 ) }{ | \lambda |^{ \alpha + 4 +i\bt } }.
\end{equation}
In order to estimate uniformly on $ u \in \mathbb{C} $ the large--time behavior of the integral $I$, in the following subsections we will use the following general scheme.
\label{asp_scheme_page}
\begin{enumerate}
\item Fix $\ve>0$. Consider $ D_{ \varepsilon } $ as the \emph{union of disks with radius $ \varepsilon $ and centers at the stationary points of $ S( u, \lambda ) $} in the closed set  $\mathbb{C} \backslash B_1( 0 )$.
\item Represent $ I =I(t,u,-1)$ in \eqref{new_type} as the sum of integrals over $ D_{ \varepsilon } \backslash B_1( 0 ) $ and $ \mathbb{C} \backslash ( B_1( 0 ) \cup D_{ \varepsilon } ) $:
\begin{equation}
\label{asp_int_sum}
\begin{aligned}
& I = I_{ int } + I_{ ext }, \quad \text{ where } \\
& I_{ int } = \int\limits_{ D_{ \varepsilon } \backslash B_1( 0 ) } f( \lambda ) \exp( t S( u, \lambda ) ), \qquad \hbox{ and }\\
& I_{ ext } = \int\limits_{ \mathbb{ C } \backslash ( B_1( 0 ) \cup D_{ \varepsilon } ) } f( \lambda ) \exp( t S( u, \lambda ) ) .
\end{aligned}
\end{equation}
\item Find an estimate of the form
\begin{equation*}
| I_{ int } | \lesssim  \varepsilon^{ \beta } R^{ \gamma },
\end{equation*}
(uniformly on $ u $, $ t $), for some $\bt,\ga>0$, and where $ R $ is the maximum of absolute values of the stationary points of $ S $.
\item Integrate $ I_{ ext } $ by parts using the Stokes formula \cite{} to obtain
\be\label{asp_ext_by_parts}
I_{ext} = I_1+I_2+I_3,
\ee
with
\be\label{I_1}
I_1 := \frac{ i }{ 2 t } \oint\limits_{ \partial D_{ \varepsilon } \backslash B_1( 0 ) } \frac{ f( \lambda ) \exp( t S( u, \lambda ) ) }{ S_{ \lambda }( u, \lambda ) } d \bar \lambda,
\ee
\be\label{I_2}
I_2 := - \frac{ 1 }{ t } \int\limits_{ \mathbb{C} \backslash ( B_1( 0 ) \cup D_{ \varepsilon } ) } \frac{ f_{ \lambda }( \lambda ) \exp( t S( u, \lambda ) ) }{ S_{ \lambda }( u, \lambda ) } ,
\ee
and
\be\label{I_3}
I_3 := \frac{ 1 }{ t } \int\limits_{ \mathbb{C} \backslash ( B_1( 0 ) \cup D_{ \varepsilon } ) } \frac{ f( \lambda ) \exp( t S( u, \lambda ) ) S_{ \lambda \lambda }( u, \lambda ) }{ ( S_{ \lambda }( u, \lambda ) )^2 } ,
\ee
\item For each $ I_i $ find an estimate of the form
\begin{equation*}
| I_i | \lesssim \frac{ \varepsilon^{ \delta } R^{ \kappa } }{ t }, \qquad \delta,\kappa>0,
\end{equation*}
using careful estimates on particular regions of the plane.
\item Choose $ \varepsilon $ depending on $ R $ and $ t $ so that a) $ I_{ int } $ and $ I_{ ext } $ are bounded with respect to $ R $ and b) $ I_{ int } $ and $ I_{ ext } $ decrease (as $ t \to \infty $) with the same speed.
\end{enumerate}

\subsection{Proof of the estimate for large $ t $}
We will suppose that $ t \geq e^{ - \frac{ 3 }{ 1 - \alpha } } $ (see subsection \ref{small_t_subsection} for the explanation of the choice of the constant).

First, we introduce some notation. Note that for $f(\la)$ defined in \eqref{f} we have $(f_\la =\partial_\la f )$
\[
f_\la (\la) = f_{\la,1}(\la) + f_{\la,2}(\la) + f_{\la,3}(\la),
\]
where
\be\label{f_la_1}
f_{\la,1}(\la) := \frac{ (\alpha +i\bt) | | \lambda |^2 - 1 |^{ \alpha +i\bt - 1 } \bar \lambda ( | \lambda |^4 - 1 ) }{ | \lambda |^{ \alpha  + 4 +i\bt } } ,
\ee
\be\label{f_la_2}
f_{\la,2}(\la) :=  \frac{ 2 | | \lambda |^2 - 1 |^{ \alpha +i\bt } \bar \lambda }{ | \lambda |^{ \alpha + 2+i\bt } },
\ee
and
\be\label{f_la_3}
f_{\la,3}(\la) :=  - \frac{ ( \alpha + 4+i\bt ) | | \lambda |^2 - 1 |^{ \alpha +i\bt} ( | \lambda |^4 - 1 ) \bar \lambda }{ 2 | \lambda |^{ \alpha + 6+i\bt } }.
\ee
%
In the following we consider three different cases, depending on the values of the parameter $ u $:

\ben
\item Case 1. Here $ u \in \mathbb{C} \backslash \mathbb{U} $ is such that $  | \lambda_0 | =1+w  \geq 2 $ (see Lemma \ref{st_points_lemma}, item \ref{4}).
\item Case 2. Here $ u \in \mathbb{U} $ (corresponding to items \ref{1}, \ref{2} and \ref{3} in Lemma \ref{st_points_lemma}), and finally,
\item Case 3. The region where $ u \in \mathbb{C} \backslash \mathbb{U} $ and $|\la_0| = 1 + \omega < 2 $ (see Lemma \ref{st_points_lemma}, item \ref{4}).
\een

\subsubsection{Case 1}
In this case the stationary points $ \lambda_0 $ and $ - \lambda_0 $ are separated from the unit circle, but $\pm \la_1(u)$ lies on the unit circle. Fortunately, the third root $\la_2(u)$ is uniformly outside the region of integration, since $|\la_0|= 1+w\geq 2$.

We take $ D_{ \varepsilon } $ to be the union of disks with radius $ \varepsilon $ (to be chosen later) and with centers in points $ \lambda_0 $, $ -\lambda_0 $, $ \lambda_1 $, $ -\lambda_1 $ (see Fig. \ref{D_e}). We also define
\be\label{R}
R:= |\la_0| =1+w,
\ee
and we assume that $\ve \lesssim R$.

\begin{figure}
\begin{center}
\begin{tikzpicture}[
	>=stealth',
	axis/.style={semithick,->},
	coord/.style={dashed, semithick},
	yscale = 1,
	xscale = 1]
	\newcommand{\xmin}{-4};
	\newcommand{\xmax}{4};
	\newcommand{\ymin}{-3};
	\newcommand{\ymax}{4};
	\newcommand{\ta}{3};
	\newcommand{\fsp}{0.2};
	\filldraw[color=light-gray1] (0,0) circle (2);
	\filldraw[color=light-gray2] (0,0) circle (1);
	\draw [axis] (\xmin,0) -- (\xmax,0) node [right] {$\re \la$};
	\draw [axis] (0,\ymin) -- (0,\ymax) node [below left] {$\ima \la$};
	\filldraw[color=light-gray3] (3.5,1.5) circle (0.2);
	\filldraw[color=light-gray3] (-3.5,-1.5) circle (0.2);
	\filldraw[color=light-gray3] (-0.72,0.72) circle (0.2);
	\filldraw[color=light-gray3] (0.72,-0.72) circle (0.2);
	\draw (-0.9,-0.2) node [left] {$1$};
	\draw (2.2,-0.2) node [left] {$2$};
	\draw (3.5,1.6) node [above] {$B(\la_0,\ve)$};
	\draw (-3.5,-2.4) node [above] {$B(-\la_0,\ve)$};
	\draw (-2,2.1) node [above] {$\la_2, -\la_2$};
	\draw (2,2) node [above] {$\la_1,-\la_1$};
	\draw [dashed] (-2,2) -- (0.4,0.2);
	\draw [dashed] (-2,2) -- (-0.4,-0.2);
	\draw [dashed] (2,2) -- (-0.72,0.72);
	\draw [dashed] (2,2) -- (0.72,-0.72);
	\fill (0.4,0.2)  circle[radius=1pt];
	\fill (-0.4,-0.2)  circle[radius=1pt];
	\fill (-0.72,0.72)  circle[radius=1pt];
	\fill (0.72,-0.72)  circle[radius=1pt];
	\fill (3.5,1.5)  circle[radius=1pt];
	\fill (-3.5,-1.5)  circle[radius=1pt];
\end{tikzpicture}
\end{center}
\caption{Standard setting for Case 1, with one pair of roots on the circle $|\la|=1$, another pair inside the disk $|\la|<1$ and the last pair outside the disk $|\la|< 2$. The set $D_\ve$ is the dark shadowed region.}\label{D_e}
\end{figure}
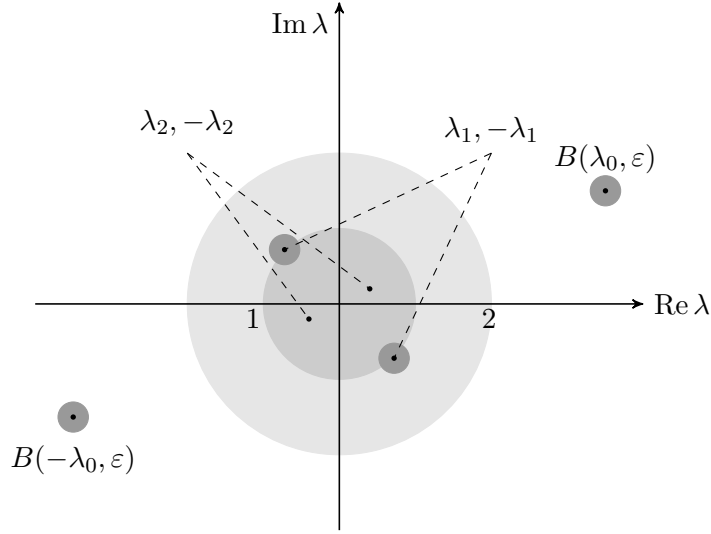

\noindent
We start by estimating $ I_{ int } $ from \eqref{asp_int_sum}. Note that for  $ \la \in D_{ \varepsilon } \backslash B_1( 0 ) $ we easily have
\[
1\leq |\la| \leq |\la_0|+\ve =R+\ve \lesssim R,
\]
and therefore the following estimates are valid:
\be\label{Est_a1}
\frac{ ( | \lambda |^2 - 1 )^{ \alpha } }{ | \lambda |^{ \alpha} } \lesssim | \lambda |^{ \alpha } \lesssim R^{ \alpha },
\ee
and
\be\label{Est_a2}
\frac{ | \lambda |^4 - 1 }{ | \lambda |^4 }  =1-\frac{1}{|\la|^4}\leq 1.
\ee
Thus, using the fact that $S(u,\la)$ in \eqref{S_u} is a pure imaginary quantity, and using \eqref{f}, \eqref{Est_a1} and \eqref{Est_a2},
\bee
| I_{ int } | & \leq &  \int_{ D_{ \varepsilon } \backslash B_1( 0 ) } |f( \lambda )| |\exp( t S( u, \lambda ) )|  \\
& \lesssim &  \int_{ D_{ \varepsilon } \backslash B_1( 0 ) }\frac{ ( | \lambda |^2 - 1 )^{ \alpha } ( | \lambda |^4 - 1 ) }{ | \lambda |^{ \alpha + 4 } }  \\
 & \lesssim & R^{ \alpha } \int_{ D_{ \varepsilon } \backslash B_1( 0 ) } d \Re \lambda \, d  \Im \lambda.
\eee
We conclude that
\be\label{Case1_I_int}
|I_{int}|   \lesssim  \varepsilon^2 R^{ \alpha }.
\ee
We pass to the estimate of $ I_{ ext } $. We will start by splitting each of the integrals $ I_2 $ and $ I_3 $ into two pieces, the first one being an integral over the set of $ \lambda $ at a distance greater than some fixed value from the stationary points, and the second one over the rest of the domain $ \mathbb{C} \backslash ( D_{ \varepsilon } \cup B_1( 0 ) ) $. More precisely, we set
\begin{equation*}
\mathbb{C} \backslash \Omega :=\{ \lambda \in \Com \colon | \lambda | \geq 2, \, | \lambda - \lambda_0 | \geq 1, \, | \lambda + \lambda_0 | \geq 1 \},
\end{equation*}
and consequently,
\bea\label{Omega_DB}
\Omega \backslash ( D_{ \varepsilon } \cup B_1( 0 ) ) & =&\{ \lambda \in \Com \ \colon \ \ve\leq | \lambda \pm  \lambda_0 | < 1 \} \nonu \\
& &  \cup \Big( \{\lambda \in \Com \ \colon \ 1 \leq | \lambda | < 2 \} \cap \{\lambda \in \Com \ \colon \,  |   \lambda \pm \lambda_1 | \geq \ve  \} \Big).
\eea

\begin{figure}
\begin{center}
\begin{tikzpicture}[
	>=stealth',
	axis/.style={semithick,->},
	coord/.style={dashed, semithick},
	yscale = 1,
	xscale = 1]
	\newcommand{\xmin}{-3.5};
	\newcommand{\xmax}{3.5};
	\newcommand{\ymin}{-3.5};
	\newcommand{\ymax}{3.5};
	\newcommand{\ta}{3};
	\newcommand{\fsp}{0.2};
	\filldraw[color=light-gray1] (0,0) circle (2);
	\filldraw[color=light-gray2] (0,0) circle (1);
	\draw [axis] (\xmin-\fsp,0) -- (\xmax,0) node [right] {$\re \la$};
	\draw [axis] (0,\ymin-\fsp) -- (0,\ymax) node [below left] {$\ima \la$};
	\filldraw[color=light-gray3] (3.5,1.5) circle (1);
	\filldraw[color=light-gray3] (-3.5,-1.5) circle (1);
	\draw (-0.9,-0.2) node [left] {$1$};
	\draw (2.2,-0.2) node [left] {$2$};
	\draw (3.5,1.5) node [above] {$\la_0$};
	\draw (-3.5,-1.5) node [above] {$-\la_0$};
	\draw (-2,2.1) node [above] {$\la_2, -\la_2$};
	\draw (2,2) node [above] {$\la_1,-\la_1$};
	\draw [dashed] (-2,2) -- (0.4,0.2);
	\draw [dashed] (-2,2) -- (-0.4,-0.2);
	\draw [dashed] (2,2) -- (-0.72,0.72);
	\draw [dashed] (2,2) -- (0.72,-0.72);
	\fill (0.4,0.2)  circle[radius=1pt];
	\fill (-0.4,-0.2)  circle[radius=1pt];
	\fill (-0.72,0.72)  circle[radius=1pt];
	\fill (0.72,-0.72)  circle[radius=1pt];
	\fill (3.5,1.5)  circle[radius=1pt];
	\fill (-3.5,-1.5)  circle[radius=1pt];
\end{tikzpicture}
\end{center}
\caption{The set $ \Com \backslash\Omega$ from Case 1 is the white open region.}\label{CminusO}
\end{figure}
\noindent
(see Figures \ref{D_e}, \ref{CminusO} and \ref{O_De_B}) and we denote $ I_2^{ \pm } $, $ I_3^{ \pm } $ integrals with the same integrands as $ I_2$, $ I_3 $ respectively and with the domain of integration being $ \mathbb{C} \backslash \Omega $ for the $ + $ sign and $ \Omega \backslash (D_{ \varepsilon } \cup B_1( 0 ) ) $ for the $ - $ sign. More precisely, from \eqref{I_2},
\be\label{I_2jp}
 I_2^{  +} := - \frac{ 1 }{ t } \int\limits_{\mathbb{C} \backslash \Omega} \frac{ f_{ \lambda }( \lambda ) \exp( t S( u, \lambda ) ) }{ S_{ \lambda }( u, \lambda ) },
\ee
\be
 I_2^{  -} := - \frac{ 1 }{ t } \int\limits_{\Omega \backslash (D_{ \varepsilon } \cup B_1( 0 ) ) } \frac{ f_{ \lambda }( \lambda ) \exp( t S( u, \lambda ) ) }{ S_{ \lambda }( u, \lambda ) },
\ee
and from \eqref{I_3},
\be
I_3^+:= \frac{ 1 }{ t } \int\limits_{\mathbb{C} \backslash \Omega } \frac{ f( \lambda ) \exp( t S( u, \lambda ) ) S_{ \lambda \lambda }( u, \lambda ) }{ ( S_{ \lambda }( u, \lambda ) )^2 }
\ee
and
\be
I_3^- := \frac{ 1 }{ t } \int\limits_{ \Omega \backslash (D_{ \varepsilon } \cup B_1( 0 ) ) } \frac{ f( \lambda ) \exp( t S( u, \lambda ) ) S_{ \lambda \lambda }( u, \lambda ) }{ ( S_{ \lambda }( u, \lambda ) )^2 } .
\ee

We first treat integrals $ I_2^{  + } $, $ I_3^{ + } $ (our goal is to obtain an estimate for these integrals independent of $ u $). For that we use the following estimates valid on $ \mathbb{C} \backslash \Omega $ (see Fig. \ref{CminusO}):
\bea\label{star1}
\left| \frac{ | \lambda |^4 - 1 }{ | \lambda |^4 } \right| \lesssim 1, \quad \left| \frac{ | \lambda |^2 - 1 }{ | \lambda | } \right| \lesssim | \lambda |, \nonu\\
\frac{ | ( \lambda - \lambda_1 ) ( \lambda + \lambda_1 ) ( \lambda - \lambda_2 ) ( \lambda + \lambda_2 ) | }{ | \lambda |^4 } \gtrsim 1.
\eea
From these estimates and \eqref{f_la_1}-\eqref{f_la_3} we get
\[
|f_{\la,1}| \lesssim (1+|\bt|) |\la|^{\al-1},
\]
\[
|f_{\la, 2}| \lesssim |\la|^{\al-1},
\]
and
\[
|f_{\la, 3}| \lesssim (1+|\bt|)|\la|^{\al-1}.
\]
We also conclude from \eqref{asp_sprime_representation} and \eqref{star1} that
\be\label{S_la_lower}
|S_\la(\cdot,\la)| \gtrsim |\la^2 -\la_0^2| = |\la + \la_0| |\la -\la_0|.
\ee
Thus from \eqref{I_2jp} we obtain the following estimate, valid for each $j=1,2,3$:
\begin{equation*}
| I_2^{ + } | \lesssim \frac{ 1 }{ t }(1+\bt) \int\limits_{ \mathbb{C} \backslash \Omega } \frac{ 1 }{ | \lambda - \lambda_0 | | \lambda + \lambda_0 | | \lambda |^{ 1 - \alpha } }=: (1+\bt)J.
\end{equation*}
Now we compute $J$. We split $ \mathbb{C} \backslash \Omega $ into four regions:
\[
D_1 := \{ \lambda \in \mathbb{C} \backslash \Omega ~ \colon | \lambda - \lambda_0 | \leq | \lambda |, | \lambda - \lambda_0 | \leq | \lambda + \lambda_0 | \},
\]
\[
D_2 := \{ \lambda \in \mathbb{C} \backslash \Omega ~ \colon | \lambda - \lambda_0 | \geq | \lambda |, | \lambda - \lambda_0 | \leq | \lambda + \lambda_0 | \},
\]
\[
D_3 := \{ \lambda \in \mathbb{C} \backslash \Omega ~ \colon | \lambda + \lambda_0 | \geq | \lambda |, | \lambda - \lambda_0 | \geq | \lambda + \lambda_0 | \},
\]
and
\[
D_4 := \{ \lambda \in \mathbb{C} \backslash \Omega ~ \colon | \lambda + \lambda_0 | \leq | \lambda |, | \lambda - \lambda_0 | \geq | \lambda + \lambda_0 | \}.
\]
We split the integral $ J $ accordingly
\begin{equation*}
J = \sum_{ j = 1 }^{ 4 } \frac{ 1 }{ t } \int\limits_{ D_j } \frac{ 1 }{ | \lambda - \lambda_0 | | \lambda + \lambda_0 | | \lambda |^{ 1 - \alpha } } =: \sum_{ j = 1 }^{ 4 } J_j.
\end{equation*}
Now we are ready to give uniform (w.r.t. $ u $) estimates of $ J_j $:
\begin{align*}
& | J_1 | \leq \frac1t \int\limits_{ D_1 } \frac{ 1 }{ | \lambda - \lambda_0 |^{ 3 - \alpha } } \leq \frac{ 1 }{ t } \int\limits_{ | \lambda | \geq 1 } \frac{ 1 }{ | \lambda |^{ 3 - \alpha } } \lesssim \frac{ 1 }{ t }, \\
& | J_2 | \leq \frac{ 1 }{ t } \int\limits_{ D_2 } \frac{ 1 }{ | \lambda |^{ 3 - \alpha } } \leq \frac{ 1 }{ t } \int\limits_{ | \lambda | \geq 2 } \frac{ 1 }{ | \lambda |^{ 3 - \alpha } } \lesssim \frac{ 1 }{ t }, \\
& | J_3 | \leq \frac{ 1 }{ t } \int\limits_{ D_3 } \frac{1}{ | \lambda |^{ 3 - \alpha } } \leq \frac{ 1 }{ t } \int\limits_{ | \lambda | \geq 2 } \frac{ 1 }{ | \lambda |^{ 3 - \alpha } } \lesssim \frac{ 1 }{ t }, \\
& | J_4 | \leq \frac{ 1 }{ t } \int\limits_{ D_4 } \frac{ 1 }{ | \lambda + \lambda_0 |^{ 3 - \alpha } } \leq \frac{ 1 }{ t } \int\limits_{ | \lambda | \geq 1 } \frac{ 1 }{ | \lambda |^{ 3 - \alpha } } \lesssim \frac{ 1 }{ t }.
\end{align*}
Note that these estimates are valid only if $ \alpha < 1 $. We conclude that
\be\label{Est_I2j_p}
| I_2^{ + } | \lesssim (1+|\bt|)\frac1t.
\ee

The estimate for $ I_{ 3 }^{ + } $ is carried out similarly. First of all, note that from \eqref{asp_sprime_representation}
\bea
S_{ \lambda \lambda } & =&  \partial_\la \Bigg[\frac{- 3 }{ \lambda^4 } ( \lambda^2 - \lambda_0^2 ) ( \lambda^2 - \lambda_1^2) ( \lambda^2 - \lambda_2^2) \Bigg]  \nonu\\
& =&  \sum_{ j = 0, 1, 2 } \left( \frac{ S_{ \lambda } }{ ( \lambda - \lambda_j ) } + \frac{ S_{ \lambda } }{ ( \lambda + \lambda_j ) } \right) - 4 \frac{ S_{ \lambda } }{ \lambda } \label{S_ll} \\
 &=:&  S_{ \lambda \lambda }^{ 1 } + \ldots + S_{ \lambda \lambda }^{ 7 }. \nonu
\eea
Further, on $ \mathbb{C} \backslash \Omega $, and using \eqref{S_la_lower},
\begin{gather*}
\left| \frac{ S_{ \lambda \lambda }^{ 1 } }{ ( S_{ \lambda } )^2 } \right| \lesssim \frac{ 1 }{ | \lambda - \lambda_0 |^2 | \lambda + \lambda_0 | }, \quad \left| \frac{ S_{ \lambda \lambda }^{ 2 } }{ ( S_{ \lambda } )^2 } \right| \lesssim \frac{ 1 }{ | \lambda - \lambda_0 | | \lambda + \lambda_0 |^2 }, \\
\left| \frac{ S_{ \lambda \lambda }^{ j } }{ ( S_{ \lambda } )^2 } \right| \lesssim \frac{ 1 }{ | \lambda - \lambda_0 | | \lambda + \lambda_0 | | \lambda | }, \quad j = 3, \ldots, 7.
\end{gather*}
From now the estimates are obtained similarly to the way they were obtained for $ I_2^{  + } $; more precisely, by splitting the domain of integration into the subdomains $ D_1, \ldots, D_4 $. We conclude
\be\label{Est_I3_p}
| I_3^{ + } | \lesssim \frac{ 1 }{ t } \int\limits_{\mathbb{C} \backslash \Omega } \frac{ |f( \lambda )| |S_{ \lambda \lambda }( u, \lambda )| }{ |S_{ \lambda }( u, \lambda ) |^2 }  \lesssim \frac 1t \int_{|\la|\geq 1} \frac{1}{|\la|^{3-\al}} \lesssim  \frac1t.
\ee

\medskip

\begin{figure}
\begin{center}
\begin{tikzpicture}[
	>=stealth',
	axis/.style={dashed,->},
	coord/.style={dashed, semithick},
	yscale = 1,
	xscale = 1]
	\newcommand{\xmin}{-3.5};
	\newcommand{\xmax}{3.5};
	\newcommand{\ymin}{-3.5};
	\newcommand{\ymax}{3.5};
	\newcommand{\ta}{3};
	\newcommand{\fsp}{0.2};
	\filldraw[color=light-gray2] (0,0) circle (2);
	\filldraw[color=light-gray1] (-0.72,0.72) circle (0.2);
	\filldraw[color=light-gray1] (0.72,-0.72) circle (0.2);
	\draw (-0.72,0.72) circle (0.2);
	\draw (0.72,-0.72) circle (0.2);
	\filldraw[color=light-gray1] (0,0) circle (1);
	\draw [axis] (\xmin-\fsp,0) -- (\xmax,0) node [right] {$\re \la$};
	\draw [axis] (0,\ymin-\fsp) -- (0,\ymax) node [below left] {$\ima \la$};
	\filldraw[color=light-gray2] (3.5,1.5) circle (1);
	\filldraw[color=light-gray2] (-3.5,-1.5) circle (1);
	\draw (-0.9,-0.2) node [left] {$1$};
	\draw (2.2,-0.2) node [left] {$2$};
	\draw (2,2) node [above] {$\la_1,-\la_1$};
	\draw [dashed] (2,2) -- (-0.72,0.72);
	\draw [dashed] (2,2) -- (0.72,-0.72);
	\filldraw[color=light-gray1] (3.5,1.5) circle (0.2);
	\filldraw[color=light-gray1] (-3.5,-1.5) circle (0.2);
	\draw (3.5,1.7) node [above] {$\la_0$};
	\draw (-3.5,-1.35) node [above] {$-\la_0$};
	\fill (0.4,0.2)  circle[radius=1pt];
	\fill (-0.4,-0.2)  circle[radius=1pt];
	\fill (-0.72,0.72)  circle[radius=1pt];
	\fill (0.72,-0.72)  circle[radius=1pt];
	\fill (3.5,1.5)  circle[radius=1pt];
	\fill (-3.5,-1.5)  circle[radius=1pt];
	\draw (-3.5,-1.5) circle (0.2);
	\draw (3.5,1.5) circle (0.2);
\end{tikzpicture}
\end{center}
\caption{The dark region is $ \Omega \backslash ( D_{ \varepsilon } \cup B_1( 0 ) )$. The curves composing $ \partial D_{ \varepsilon } \backslash B_1( 0 ) $ in $I_1$ (see \eqref{I_1}) are the darkest continuous lines.}\label{O_De_B}
\end{figure}
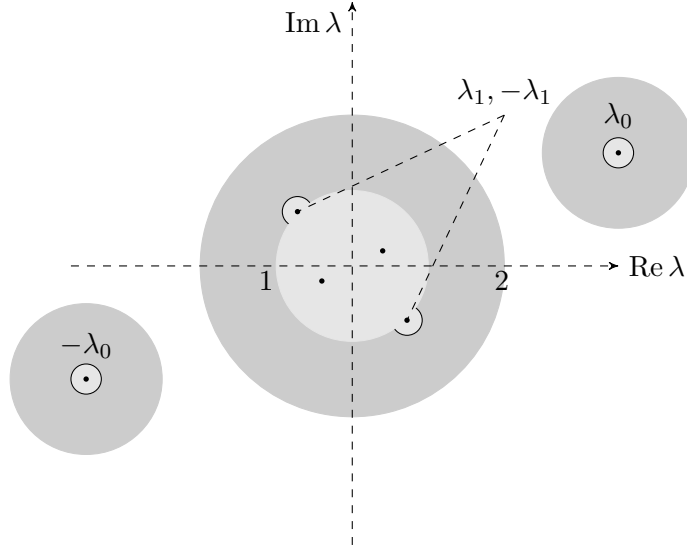
\noindent

%

Now we pass to estimating integrals $ I_1 $, $ I_2^{  - } $, and $ I_3^{ - } $. The reference domain now is $ \Omega \backslash ( D_{ \varepsilon } \cup B_1( 0 ) )$, described in \eqref{Omega_DB} and Fig. \ref{O_De_B}. First of all, let us denote
\[
\lambda^{ ( 1 ) } = \lambda_0, \quad \lambda^{ ( 2 ) } = - \lambda_0 , \quad \lambda^{ ( 3 ) } = \lambda_1,\quad  \lambda^{ ( 4 ) } = - \lambda_1, \quad  \lambda^{ ( 5 ) } =  \lambda_2,\quad  \lambda^{ ( 6) } = - \lambda_2.
\]
We split the integral $ I_1 $ into the sum of the following integrals
\bea
I_1 & =&  \frac{ i }{ 2 t } \sum_{ k = 1 }^{ 4 } \oint\limits_{ \partial B_{ \varepsilon }( \lambda^{ ( k ) } ) \backslash B_1( 0 ) } \frac{ f( \lambda ) e^{ t S( u, \lambda ) } }{ S_{ \lambda }( \lambda ) } d \bar \lambda \nonu \\
&  =:&  \sum_{ k= 1 }^{ 4 } I_1^{ k}. \label{I_1k}
\eea
Further, we split $ \Omega \backslash ( D_{ \varepsilon } \cup B_1( 0 ) ) $ into four domains ($k = 1, \ldots, 4$)
\be\label{O_k}
\Omega^{ (k) } := \big\{ \lambda \in \Omega \backslash ( B_1( 0 ) \cup D_{ \varepsilon } ) ~ \colon | \lambda - \lambda^{ ( k ) } | \leq | \lambda - \lambda^{ ( \ell ) } |, ~ \ell \neq k, ~ \ell = 1, \ldots, 4 \big\}.
\ee
Note additionally the essential inclusion, valid for each $k=1,\ldots, 4$,
\be\label{Neat_Inclusion}
\partial B_{ \varepsilon }( \lambda^{ ( k ) } ) \backslash B_1( 0 ) \subset \Omega^{(k)}.
\ee
Finally, we put
\be\label{I_2_jmk}
I_2^{ -,k } := \frac{ 1 }{ t } \int\limits_{ \Omega^{ ( k ) } } \frac{ f_{ \la }( \lambda ) e^{ i t S( u, \lambda ) } }{ S_{ \lambda }( u, \lambda ) } ,
\ee
and
\be\label{I_3_mk}
I_3^{-,k} := \frac{ 1 }{ t } \int\limits_{ \Omega^{ ( k ) } } \frac{ f( \lambda ) S_{ \lambda \lambda }( u, \lambda ) e^{ i t S( u, \lambda ) } }{ ( S_{ \lambda }( u, \lambda ) )^2 } .
\ee
In order to estimates these integrals and each $I_1^j$, we follow the approach that we described now. For every integral over domain $ \Omega^{ ( k ) } $, we use the following polar coordinates at the stationary point $\la^{(k)}$:
\[
\lambda = \lambda^{ (k) } + \rho e^{ i \varphi }, \quad \varphi \in [0,2\pi),
\]
and where $\ve\leq \rho \leq \rho_0(\varphi)$. Note that $\rho_0$ is always bounded by a fixed constant $\ve_0$, that can be chosen e.g. equals 4, uniformly on $\varphi$.

\medskip

Now we start computing. Fix $\rho $ as previously mentioned.

\medskip

\noindent
1. On $ \partial B_{ \rho }(\lambda^{ (1) })  \cap \Omega^{ (1) } $ we have $1\leq |\la| \leq |\la_0| +1$, so we use the following estimates:
\[
\left| \frac{ | \lambda |^4 - 1 }{ | \lambda |^4 } \right| \lesssim 1, \quad \left| \frac{ | \lambda |^2 - 1 }{ | \lambda | } \right| \lesssim R,
\]
(recall that $R= |\la_0|$, defined in \eqref{R}). On the other hand, using \eqref{asp_sprime_representation},
\[
| S_{ \lambda } | \sim \frac{ 1 }{ |\lambda|^4 }  |\lambda - \lambda^{(1)}|  |\lambda - \lambda^{(2)}| |\lambda - \lambda^{(3)}|  |\lambda - \lambda^{(4)}|   |\lambda - \lambda^{(5)}||\lambda - \lambda^{(6)}| \gtrsim \rho R,
\]
and from \eqref{S_ll},
\[
\frac{ S_{ \lambda \lambda } }{ ( S_{ \lambda } )^2 } =  \frac1{S_\la}  \Big[ \sum_{ j = 1}^6  \frac{ 1 }{  \lambda - \lambda^{(j)}  }   - 4 \frac{ 1}{ \lambda } \Big],
\]
so that
\[
\left| \frac{ S_{ \lambda \lambda } }{ ( S_{ \lambda } )^2 } \right| \lesssim \frac{ 1 }{ \rho R^2 } + \frac{ 1 }{ \rho^2 R }.
\]
Using these observations we are able to obtain the following estimate for $I_1^1$ in \eqref{I_1k}:
\bee
| I_{ 1 }^{ 1 } | & \lesssim &   \frac{ 1 }{ t } \oint\limits_{ \partial B_{ \varepsilon }( \lambda^{ ( 1 ) } ) \backslash B_1( 0 ) } \frac{ |f( \lambda ) | } { |S_{ \lambda }( \lambda )| } d \bar \lambda \\
& \lesssim & \frac{ 1 }{ t } \frac{ R^{ \alpha } \times \varepsilon }{ \varepsilon R } = \frac{ 1 }{ t } R^{ \alpha - 1 }.
\eee
On the other hand, for \eqref{I_2_jmk}-\eqref{I_3_mk},
\bee
| I_2^{ -, 1 } | & \lesssim & \frac{ 1 }{ t } \int\limits_{ \Omega^{ ( 1 ) } } \frac{ |f_{ \la }( \lambda )| }{ |S_{ \lambda }( u, \lambda )| }\\
& \lesssim &  \frac{ 1 }{ t }(1+|\bt|) \int\limits_{ \varepsilon }^{ \varepsilon_0 } \frac{ R^{ \alpha - 1 } \rho \, d \rho }{ \rho R } \sim \frac{ 1 }{ t } (1+|\bt|)R^{ \alpha - 2 }, \quad j = 1, 2, 3,
\eee
and finally,
\bee
| I_3^{-,1} | & \lesssim & \frac{ 1 }{ t } \int\limits_{ \Omega^{ ( 1 ) } } \frac{ |f( \lambda )||S_{ \lambda \lambda }( u, \lambda )| } { | S_{ \lambda }( u, \lambda ) |^2 } \\
& \lesssim & \frac{ 1 }{ t } \int\limits_{ \varepsilon }^{ \varepsilon_0 } \frac{ R^{ \alpha } \rho d \rho }{ \rho R^2 } + \frac{ 1 }{ t } \int\limits_{ \varepsilon }^{ \varepsilon_0 } \frac{ R^{ \alpha } \rho d \rho }{ \rho^2 R } \\
& \lesssim & \frac{ 1 }{ t } R^{ \alpha - 1 } |\ln \varepsilon|.
\eee
(Recall that $ \varepsilon_0 $ is a certain fixed independent of $ u $, $ t $, $ \varepsilon $, and $ R $.)

\medskip

\noindent
2. On $ \partial B_{ \rho }(\lambda^{ (2) }) \cap \Omega^{ (2) } $, computations are perfectly symmetric with respect to the first case. We obtain the following estimates
\be
| I_{ 1 }^{ 2 } | \lesssim \frac{ 1 }{ t } R^{ \alpha - 1 }, \quad | I_2^{ -, 2 } | \lesssim \frac{ 1 }{ t }(1+\bt) R^{ \alpha - 2 }, \quad j = 1, 2, 3, \quad \hbox{ and } \quad  | I_3^{-,2} | \lesssim \frac{ 1 }{ t } R^{ \alpha - 1 } |\ln \varepsilon|.
\ee
\medskip

\noindent
3. Now we deal with the more involved case $ \partial B_{ \rho }(\lambda^{ (3) }) \cap \Omega^{ (3) } $. In this region we have $|\la^{(3)}| =1$ and $1\leq |\la|\leq 2 $. Therefore, from the decomposition $\lambda = \lambda^{ (3) } + \rho e^{ i \varphi }$ we shall use the following estimates
\[
 | | \lambda |^2 - 1 | \lesssim \rho, \quad | | \lambda |^4 - 1 | \lesssim \rho, \quad |f(\la)| \lesssim \rho^{\al+1}.
\]
Similarly,
\[
| S_{ \lambda } |  \sim \frac{ 1 }{ |\lambda|^4 }  |\lambda - \lambda^{(1)}|  |\lambda - \lambda^{(2)}| |\lambda - \lambda^{(3)}|  |\lambda - \lambda^{(4)}|   |\lambda - \lambda^{(5)}||\lambda - \lambda^{(6)}|  \gtrsim   \rho R^2.
\]
Finally, since
\[
 \frac{ S_{ \lambda \lambda } }{ ( S_{ \lambda } )^2 } =  \frac1{S_\la}  \Big[ \sum_{ j = 1}^6  \frac{ 1 }{  \lambda - \lambda^{(j)}  }   - 4 \frac{ 1}{ \lambda } \Big],
\]
we obtain
\[
\left| \frac{ S_{ \lambda \lambda } }{ ( S_{ \lambda } )^2 } \right| \lesssim \frac{ 1 }{ \rho R^2 } \Big( 1+\frac1\rho+ \frac{1}{R }\Big)  \lesssim  \frac{ 1 }{ \rho R^2 } + \frac{ 1 }{ \rho^2 R^2 } + \frac{ 1 }{ \rho R^3 }.
\]
Using these observations we are able to obtain the following estimates:
\begin{gather*}
| I_{ 1 }^{ 3 } | \lesssim \frac{ 1 }{ t } \frac{ \varepsilon \cdot \varepsilon^{ \alpha + 1 } }{ \varepsilon R^2 } = \frac{ 1 }{ t } \frac{ \varepsilon^{ \alpha +1 } }{ R^{ 2 } }, \\
| I_2^{ j, 3 } | \lesssim \frac{ 1 }{ t }(1+\bt) \int\limits_{ \varepsilon }^{ \varepsilon_0 } \frac{ \rho^{ \alpha } \rho d \rho }{ \rho R^2 } \sim \frac{ (1+|\bt|) }{ t R^{ 2 } }, \quad j = 1, 2, 3, \\
| I_3^3 | \lesssim \frac{ 1 }{ t } \int\limits_{ \varepsilon }^{ \varepsilon_0 } \rho^{\al+1} \Big[ \frac{ 1 }{ \rho R^2 } + \frac{ 1 }{ \rho^2 R^2 } + \frac{ 1 }{ \rho R^3 } \Big]  \rho \, d\rho \lesssim  \frac{ 1 }{ t R^{ 2 } }.
\end{gather*}

\medskip

\noindent
4. Finally, the case for $ \partial B_{ \rho } (\lambda^{ (4) })\cap \Omega^{ (4) } $ is very similar to the previous one. It is not difficult to see that the following estimates can be obtained
\begin{equation*}
| I_{ 1 }^{ 4 } | \lesssim \frac{ 1 }{ t } \frac{ \varepsilon^{ \alpha +1 } }{ R^{ 2 } }, \quad | I_2^{  -, 4 } | \lesssim \frac{ (1+|\bt|) }{ t R^{ 2 } }, \quad j = 1, 2, 3, \quad | I_3^{-,4} | \lesssim \frac{ 1 }{ t R^{ 2 } }.
\end{equation*}

\medskip

Adding the four previous estimates, we conclude that
\be\label{Case1_I_1}
|I_1| \leq  \sum_{j=1}^4  |I_1^j| \lesssim  \frac1{Rt} \Big[  R^\al + \frac{\ve^{\al+1}}{R}\Big],
\ee
and for $k=1,\ldots, 4$,
\be\label{Case1_I_2}
|I_2^{j,-,k}| \leq  \sum_{j=1}^4  |I_1^j| \lesssim  \frac{(1+|\bt|)}{R^2t} (1+ R^\al ),
\ee
and
\be\label{Case1_I_3}
|I_3^k| \leq  \sum_{j=1}^4  |I_1^j| \lesssim  \frac1{Rt} ( 1+ R^\al |\ln \ve| ).
\ee

\medskip

Collecting all the previous inequalities, including \eqref{Case1_I_int}, \eqref{Est_I2j_p}, \eqref{Est_I3_p} and \eqref{Case1_I_1}-\eqref{Case1_I_3}, we get the global estimate (recall that $R=|\la_0|>2$ and $t$ is w.l.o.g. assumed large)
\bee
|I| & \lesssim & \ve^2 R^\al + \frac {(1+|\bt|)}t \Big[ 1+ R^{\al-1} + \frac{\ve^{\al+1}}{R^2} +\frac 1{R^2} + R^{\al-2} +\frac1R + R^{\al-1} |\ln\ve|  \Big] \\
& \lesssim & \ve^2 R^\al + \frac {(1+|\bt|)}t \Big[ 1 +  \frac{\ve^{\al+1}}{R^2} + R^{\al-1} |\ln\ve|\Big].
\eee
By choosing $ \varepsilon := \min \big\{ \frac{\delta_0}{\sqrt{R}}, \frac{ 1 }{ \sqrt{ t R } } \big\} $, with $\delta_0>0$ small, we obtain that, if $ \alpha < 1 $, the integral $I$ is uniformly bounded with respect to $ R $ and decreases as
\[
|I| \lesssim \frac{R^{\al-1}}{tR} + \frac {(1+|\bt|)}t \Big(1 +R^{\al-1}  |\ln  tR| \Big) \lesssim \frac {(1+|\bt|)|\ln t|}t.
\]


\subsubsection{Case 2: $ u \in \mathbb{U} $}

Recall that in this case \emph{all the stationary points lie on the unit circle} (see Lemma \ref{st_points_lemma}, items 1, 2 and 3, and Fig. \ref{case_U}).

As we mentioned earlier,  we take $ D_{ \varepsilon } $ to be the union of disks with radius $ \varepsilon $ and with centers in the stationary points.
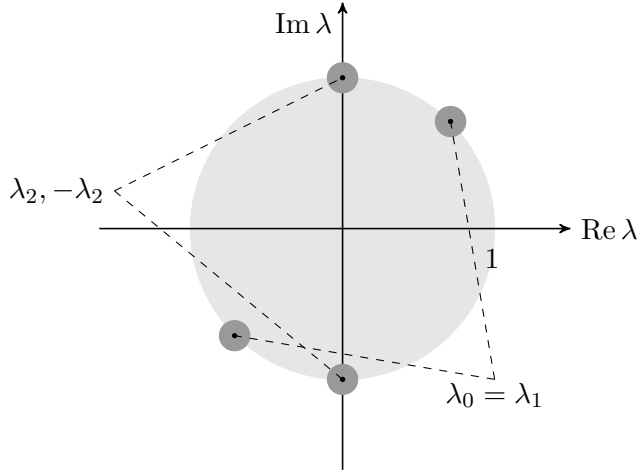
\begin{figure}
\begin{center}
\begin{tikzpicture}[
	>=stealth',
	axis/.style={semithick,->},
	coord/.style={dashed, semithick},
	yscale = 1,
	xscale = 1]
	\newcommand{\xmin}{-3};
	\newcommand{\xmax}{3};
	\newcommand{\ymin}{-3};
	\newcommand{\ymax}{3};
	\newcommand{\ta}{3};
	\newcommand{\fsp}{0.2};
	\filldraw[color=light-gray1] (0,0) circle (2);
	\draw [axis] (\xmin-\fsp,0) -- (\xmax,0) node [right] {$\re \la$};
	\draw [axis] (0,\ymin-\fsp) -- (0,\ymax) node [below left] {$\ima \la$};
	\filldraw[color=light-gray3] (1.42,1.42) circle (0.2);
	\filldraw[color=light-gray3] (-1.42,-1.42) circle (0.2);
	\filldraw[color=light-gray3] (0,2) circle (0.2);
	\filldraw[color=light-gray3] (0,-2) circle (0.2);
	\draw (2.2,-0.4) node [left] {$1$};
	\draw (-3,0.5) node [left] {$\la_2,-\la_2$};
	\draw (2,-2.5) node [above] {$\la_0=\la_1$};
	\draw [dashed] (-3,0.5) -- (0,2);
	\draw [dashed] (-3,0.5) -- (0,-2);
	\draw [dashed] (2,-2) -- (1.42,1.42);
	\draw [dashed] (2,-2) -- (-1.42,-1.42);
	\fill (0,-2)  circle[radius=1pt];
	\fill (0,2)  circle[radius=1pt];
	\fill (1.42,1.42)  circle[radius=1pt];
	\fill (-1.42,-1.42)  circle[radius=1pt];
\end{tikzpicture}
\end{center}
\caption{Setting for Case 2 (in fig., Lemma \ref{st_points_lemma}, item \ref{2}). $D_\ve$ is the dark shadowed region.}\label{case_U}
\end{figure}
\noindent
We start by estimating $ I_{ int } $ in \eqref{asp_int_sum}. First of all, note that on $ D_{ \varepsilon } \backslash B_1( 0 ) $ the following estimates are valid:
\begin{equation*}
( | \lambda |^2 - 1 )^{ \alpha } \lesssim \varepsilon^{ \alpha }, \quad | \lambda | \sim 1, \quad | \lambda |^4 - 1 \lesssim \varepsilon.
\end{equation*}
Thus, using \eqref{f},
\[
|f(\la)| \lesssim \frac{ | | \lambda |^2 - 1 |^{ \alpha} ( | \lambda |^4 - 1 ) }{ | \lambda |^{ \alpha + 4} } \lesssim  \ve^{\al +1},
\]
and
\be\label{I_int_case2}
 |I_{ int } | \lesssim   \int\limits_{ D_{ \varepsilon } \backslash B_1( 0 ) } |f( \lambda )| \lesssim  \varepsilon^{ \alpha + 3 }.
\ee

\medskip

We pass to the estimate of $ I_{ ext } $ introduced in \eqref{asp_ext_by_parts}.  Recall that $I_{ext}$ is composed of three different parts, $I_1$, $I_2$ and $I_3$, see \eqref{I_1}, \eqref{I_2} and \eqref{I_3}.  

\medskip

As in the previous case, we will start by splitting each of the integrals $ I_2^j $ and $ I_3 $ into two additional pieces, the first one being an integral over the set of $ \lambda $ at a distance greater than some fixed value from the unit circle (containing all the stationary points), and the second one over the rest of the domain included in $ \mathbb{C} \backslash ( D_{ \varepsilon } \cup B_1( 0 ) ) $. More precisely, in this case we set

\begin{figure}
\begin{center}
\begin{tikzpicture}[
	>=stealth',
	axis/.style={semithick,->},
	coord/.style={dashed, semithick},
	yscale = 0.7,
	xscale = 0.7]
	\newcommand{\xmin}{-5};
	\newcommand{\xmax}{5};
	\newcommand{\ymin}{-5};
	\newcommand{\ymax}{5};
	\newcommand{\ta}{3};
	\newcommand{\fsp}{0.2};
	\filldraw[color=light-gray1] (0,0) circle (4);
	\filldraw[color=light-gray3] (0,0) circle (2);
	\draw [axis] (\xmin-\fsp,0) -- (\xmax,0) node [right] {$\re \la$};
	\draw [axis] (0,\ymin-\fsp) -- (0,\ymax) node [below left] {$\ima \la$};
	\filldraw[color=light-gray3] (1.42,1.42) circle (0.2);
	\filldraw[color=light-gray3] (-1.42,-1.42) circle (0.2);
	\filldraw[color=light-gray3] (0,2) circle (0.2);
	\filldraw[color=light-gray3] (0,-2) circle (0.2);
	\draw (2,-0.4) node [right] {$1$};
	\draw (4,0) node [below] {$2$};
	\draw (2,-2) node [below] {$\la_0= \la_1$};
	\draw (-3,0.5) node [left] {$\la_2,-\la_2$};
	\draw [dashed] (-3,0.5) -- (0,2);
	\draw [dashed] (-3,0.5) -- (0,-2);
	\draw [dashed] (2.1,-2) -- (1.42,1.42);
	\draw [dashed] (2.1,-2) -- (-1.42,-1.42);
	\fill (0,2)  circle[radius=1pt];
	\fill (0,-2)  circle[radius=1pt];
	\fill (1.42,1.42)  circle[radius=1pt];
	\fill (-1.42,-1.42)  circle[radius=1pt];
\end{tikzpicture}
\end{center}
\caption{Case 2. The white region is $\Com\backslash \Omega$. The light shadowed region (without small disks of radius $\ve$ around the stationary points), is $\Omega \backslash (D_{ \varepsilon } \cup B_1( 0 ) )$.}\label{case_U_1}
\end{figure}
\noindent

\begin{equation*}
\mathbb{C} \backslash \Omega: =\{ \lambda \colon | \lambda | \geq 2 \},
\end{equation*}
and we denote as $ I_2^{  \pm } $, $ I_3^{ \pm } $, the integrals with same integrands as $ I_2^{ j} $ and $ I_3 $, respectively, and with domain of integration being $ \mathbb{C} \backslash \Omega $ for the plus sign, and $ \Omega \backslash (D_{ \varepsilon } \cup B_1( 0 ) ) $ for the minus sign (see Fig. \ref{case_U_1} and Case 1 for a similar splitting).

\medskip

We first treat integrals $ I_2^{ j, + } $, $ I_3^{ + } $. For that we use the following estimates valid on $ \mathbb{C} \backslash \Omega $:
\be\label{Auxiliary_case2}
\left| \frac{ | \lambda |^4 - 1 }{ | \lambda |^4 } \right| \lesssim 1, \quad \left| \frac{ | \lambda |^2 - 1 }{ | \lambda | } \right| \lesssim | \lambda |,
\ee
and from \eqref{asp_sprime_representation},
\[
| S_{ \lambda } | \gtrsim | \lambda |^2.
\]
From the decomposition (see previous case)
\[
S_{ \lambda \lambda }(u,\la) =  \sum_{ j = 0, 1, 2 } \left( \frac{ S_{ \lambda }(u,\la) }{ ( \lambda - \lambda_j ) } + \frac{ S_{ \lambda } (u,\la)}{ ( \lambda + \lambda_j ) } \right) - 4 \frac{ S_{ \lambda }(u,\la) }{ \lambda },
\]
we also have
\[
  \left| \frac{ S_{ \lambda \lambda }(u,\la) }{ ( S_{ \lambda }(u,\la) )^2 } \right| \lesssim \frac{ 1 }{ | \lambda |^3 }.
\]
Finally, using \eqref{f} and \eqref{f_la_1}-\eqref{f_la_3}, and estimates \eqref{Auxiliary_case2}, we have that
\[
|f(\la)| \lesssim |\la|^\al , \qquad |f_{\la,1}(\la)| +|f_{\la,2}(\la)|+|f_{\la,3}(\la)| \lesssim (1+ |\bt |) |\la|^{\al-1}.
\]
Thus we obtain the following estimates (recall that $\al<1$)
\bea
 | I_2^{  + } | & \lesssim &   \frac{ 1 }{ t } \int_{\mathbb{C} \backslash \Omega } \frac{ |f_{ \lambda}( \lambda )|  }{ |S_{ \lambda }( u, \lambda )| } \lesssim \frac{ 1 }{ t } \int\limits_{ \mathbb{C} \backslash \Omega } \frac{ 1 }{ | \lambda |^{ 3 - \alpha } } \lesssim \frac{ 1 }{ t }, \label{I_2_jp_case2} \\
 | I_3^+ | & \lesssim & \frac{ 1 }{ t } \int\limits_{\mathbb{C} \backslash \Omega } \frac{| f( \lambda )| |S_{ \lambda \lambda }( u, \lambda )| }{ | S_{ \lambda }( u, \lambda ) |^2 } \lesssim \frac{ 1 }{ t } \int\limits_{ \mathbb{C} \backslash \Omega } \frac{ 1 }{ | \lambda |^{ 3 - \alpha } } \lesssim \frac{ 1 }{ t }.\label{I_3_p_case2}
\eea
\medskip

Now we pass to estimating integrals $ I_1 $, $ I_2^{  - } $, $ I_3^{ - } $. As in the previous case, we denote
\[
 \lambda^{ ( 1 ) } = \lambda_0,  \quad \lambda^{ ( 2 ) } = - \lambda_0,  \quad \lambda^{ ( 3 ) } = \lambda_1,  \quad \lambda^{ ( 4 ) } = - \lambda_1,  \quad \lambda^{ ( 5 ) } = \lambda_2,  \quad \lambda^{ ( 6 ) } = - \lambda_2 .
\]
(Note that they are not necessarily different.) Now we split the integral $ I_1 $ (see \eqref{I_1}) into the sum of the following six integrals
\bee
I_1 & =&  \frac{ i }{ 2t } \sum_{ k = 1 }^{ 6 } \int\limits_{ \partial B_{ \varepsilon }( \lambda^{ ( k ) } ) \backslash B_1( 0 ) } \frac{ f( \lambda ) e^{ t S( u, \lambda ) } }{ S_{ \lambda }( \lambda ) } d \bar \lambda \\
&  =:&  \sum_{ k = 1 }^{ 6 } I_1^{ k }.
\eee
Further, we split $ \Omega \backslash D_{ \varepsilon } $ into six domains, for $k = 1, \ldots, 6$,
\begin{equation*}
\Omega^{ (k) } = \big\{ \lambda \in \Omega \backslash ( D_{ \varepsilon } \cup B_1( 0 ) ) \ \colon  | \lambda - \lambda^{ ( k ) } | \leq | \lambda - \lambda^{ ( \ell ) } |, ~ \ell \neq k,~  \ell = 1, \ldots, 6 \big\}.
\end{equation*}
Following the previous case, we define
\begin{gather*}
I_2^{ -,k } = \frac{ 1 }{ t } \int\limits_{ \Omega^{ ( k ) } } \frac{ f_{ \la }( \lambda ) e^{ t S( u, \lambda ) } }{ S_{ \lambda }( u, \lambda ) }, \\
I_3^{-,k} = \frac{ 1 }{ t } \int\limits_{ \Omega^{ ( k ) } } \frac{ f( \lambda ) S_{ \lambda \lambda }( u, \lambda ) e^{ t S( u, \lambda ) } }{ ( S_{ \lambda }( u, \lambda ) )^2 }.
\end{gather*}
For every integral over domain $ \Omega^{ ( k ) } $, we perform the change of variables (see previous case for details)
\[
\lambda = \lambda^{ (k) } + \rho e^{ i \varphi } .
\]

\medskip

Now we compute each integral. Fix $\rho>0$. On $ \partial B_{ \rho }(\lambda^{ (k) }) \cap \Omega^{ (k) } $, we use the following estimates:
\[
| \lambda |^4 - 1 \lesssim \rho, \quad | \lambda |^2 - 1  \lesssim \rho, \quad | \lambda | \sim 1,
\]
and
\[
 |f( \lambda )| \lesssim \rho^{\al+1}, \quad |f_\la( \lambda )| \lesssim (1+|\bt|) \rho^{\al}.
\]
On the other hand,\footnote{In the worst case, where three stationary points coincide, this estimate is optimal.  Note also that the other three stationary points are antipodal, lying at distances $\sim 1$.}
\bee
| S_{ \lambda } | &\sim & \frac{ 1 }{ |\lambda|^4 }  |\lambda - \lambda^{(1)}|  |\lambda - \lambda^{(2)}| |\lambda - \lambda^{(3)}|  |\lambda - \lambda^{(4)}|   |\lambda - \lambda^{(5)}||\lambda - \lambda^{(6)}|  \\
& \gtrsim & \rho^3.
\eee
Finally,
\bee
 \left| \frac{ S_{ \lambda \lambda } }{ ( S_{ \lambda } )^2 } \right| & =& \abs{  \frac1{S_\la}  \Big[ \sum_{ j = 1}^6  \frac{ 1 }{  \lambda - \lambda^{(j)}  }   - 4 \frac{ 1}{ \lambda } \Big] }\\
 &  \lesssim &\frac{ 1 }{ \rho^4 }.
\eee
Using these observations, and the inclusion $\partial B_{ \varepsilon }( \lambda^{ ( k ) } ) \backslash B_1( 0 ) \subset \Omega^{(k)}$, we are able to obtain the following estimates for each $k$:
\bea
 | I_{ 1 }^{ k } | & \lesssim &  \frac1t \int\limits_{ \partial B_{ \varepsilon }( \lambda^{ ( k ) } ) \backslash B_1( 0 ) } \frac{ |f( \lambda )| } { |S_{ \lambda }( \lambda )| } d \bar \lambda  \nonu \\
 & \lesssim & \frac{ 1 }{ t } \frac{ \varepsilon^{ \alpha + 1 } \varepsilon }{ \varepsilon^3 } = \frac{ 1 }{ t } \varepsilon^{ \alpha - 1 }; \label{I_1k_case2}
\eea
for $ j = 1, 2, 3,$
\bea
 | I_2^{ -,k } | & \lesssim & \frac{ 1 }{ t } \int\limits_{ \Omega^{ ( k ) } } \frac{ |f_{ \la }( \lambda )| }{ |S_{ \lambda }( u, \lambda )| }  \nonu\\
 & \lesssim &  \frac1t (1+\bt)\int\limits_{ \varepsilon }^{ \varepsilon_0 } \frac{ \rho^{ \alpha } \rho d \rho }{ \rho^3 } \lesssim \frac{ 1 }{ t } (1+|\bt|)\varepsilon^{ \alpha - 1 } , \label{I_2jmk_case2}
\eea
and
\bea
 | I_3^{-,k} | & \lesssim & \frac{ 1 }{ t } \int\limits_{ \Omega^{ ( k ) } } \frac{ |f( \lambda )||S_{ \lambda \lambda }( u, \lambda )| } { | S_{ \lambda }( u, \lambda ) |^2 } \nonu\\
 & \lesssim & \frac1t \int\limits_{ \varepsilon }^{ \varepsilon_0 } \frac{ \rho^{ \alpha + 1 } \rho d \rho }{ \rho^4 } \lesssim \frac{ 1 }{ t } \varepsilon^{ \alpha - 1 }, \label{I_3mk_case2}
\eea
where $ \varepsilon_0 $ is a certain fixed constant, independent of $u$ and $t$.

\medskip

Gathering estimates \eqref{I_int_case2}, \eqref{I_2_jp_case2}, \eqref{I_3_p_case2} and \eqref{I_1k_case2}-\eqref{I_3mk_case2}, we obtain
\[
|I| \lesssim \ve^{\al+3} + \frac 1t  \Big( 1+  (1+|\bt|)\varepsilon^{ \alpha - 1 } \Big).
\]
Finally, for $t>0$ large we choose $ \varepsilon := \min \Big\{ \delta_0, \frac{ 1 }{ t^{ 1/4 }  } \Big\} $ to obtain that, if $ \alpha < 1 $, then
\begin{equation*}
| I | \lesssim (1+|\bt|) \Big[ \frac1t + \frac{ 1 }{ t^{  (\alpha + 3 )/4  } } \Big] \lesssim \frac{  (1+|\bt|) }{ t^{  (\alpha + 3 )/4  } }.
\end{equation*}

\subsubsection{Case 3}

For the sake of easiness, we recall the definition of case 3: the region $ u \in \mathbb{C} \backslash \mathbb{U} $ and $|\la_0| = 1 + \omega < 2 $ (see Lemma \ref{st_points_lemma}, item 4 and Fig. \ref{Case3}).

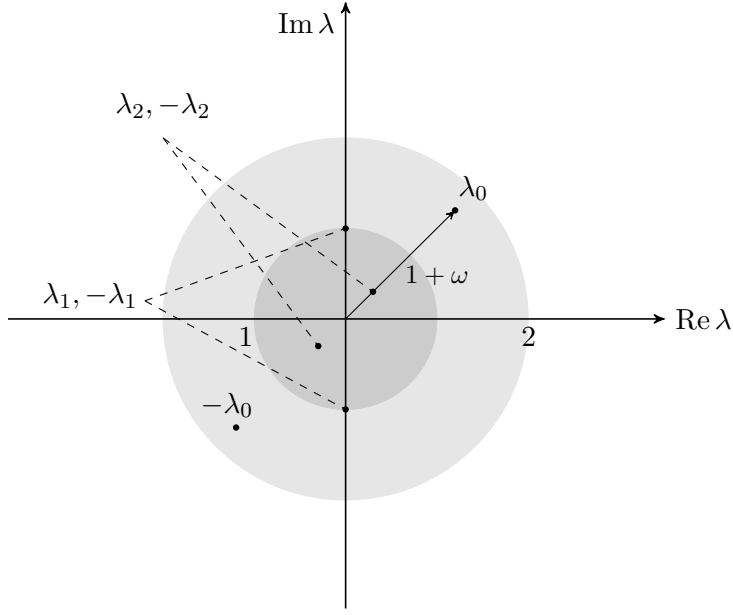
\begin{figure}
\begin{center}
\begin{tikzpicture}[
	>=stealth',
	axis/.style={semithick,->},
	coord/.style={dashed, semithick},
	yscale = 1.2,
	xscale = 1.2]
	\newcommand{\xmin}{-3.5};
	\newcommand{\xmax}{3.5};
	\newcommand{\ymin}{-3};
	\newcommand{\ymax}{3.5};
	\newcommand{\ta}{3};
	\newcommand{\fsp}{0.2};
	\filldraw[color=light-gray1] (0,0) circle (2);
	\filldraw[color=light-gray2] (0,0) circle (1);
	\draw [axis] (\xmin-\fsp,0) -- (\xmax,0) node [right] {$\re \la$};
	\draw [axis] (0,\ymin-\fsp) -- (0,\ymax) node [below left] {$\ima \la$};
	\draw (-0.9,-0.2) node [left] {$1$};
	\draw (2.2,-0.2) node [left] {$2$};
	\draw (1.4,1.2) node [above] {$\la_0$};
	\draw (-1.3,-1.2) node [above] {$-\la_0$};
	\draw (-2,2.1) node [above] {$\la_2, -\la_2$};
	\draw (-2.8,0) node [above] {$\la_1,-\la_1$};
	\draw [dashed] (-2,2) -- (0.3,0.3);
	\draw [dashed] (-2,2) -- (-0.3,-0.3);
	\draw [dashed] (-2.2,0.2) -- (0,1);
	\draw [dashed] (-2.2,0.2) -- (0,-1);
	\fill (0.3,0.3)  circle[radius=1pt];
	\fill (-0.3,-0.3)  circle[radius=1pt];
	\fill (0,1)  circle[radius=1pt];
	\fill (0,-1)  circle[radius=1pt];
	\fill (1.2,1.2)  circle[radius=1pt];
	\fill (-1.2,-1.2)  circle[radius=1pt];
	\draw [->] (0,0) -- (1.2,1.2);
	\draw (1,0.7) node [below] {\small $1+\omega$};
\end{tikzpicture}
\end{center}
\caption{The setting for Case 3. Note that if e.g. $\la_0,\la_1$ and $\la_2$ are close, and $\la$ is close to them, then $-\la_0,-\la_1$ and $-\la_2$ are at distance $\sim 1$ of $\la$. This fact is useful to get a lower estimate on $S_\la$.}\label{Case3}
\end{figure}
\noindent

\medskip

As usual, we take $ D_{ \varepsilon } $ to be the union of disks with radius $ \varepsilon $ and with centers in the stationary points. For $t>0$ large enough, we consider the following two sub-cases:
\[
0 < \omega \leq \frac{ 4 }{ t^{ 1/4 } },  \quad \hbox{and} \quad  \omega > \frac{ 4 }{ t^{ 1/4 } }.
\]

\begin{itemize}
\item[I/] $ 0 < \omega \leq \frac{ 4 }{ t^{ 1/4 } } $.

In this subcase we set $ \varepsilon = \frac{ 1 }{ t^{ 1/4 } } $ thus obtaining that $ \omega \lesssim \varepsilon $.

We note that the following estimates are valid on $ D_{ \varepsilon } \backslash B_1( 0 ) $:
\begin{equation*}
( | \lambda |^2 - 1 )^{ \alpha } \lesssim ( \omega + \varepsilon )^{ \alpha } \lesssim \varepsilon^{ \alpha }, \quad | \lambda |^4 - 1 \lesssim \omega + \varepsilon \lesssim \varepsilon.
\end{equation*}
Further, it is easy to see that the reasoning used in the previous case applies to this case also. Thus we obtain that $ | I | \lesssim \frac{ 1 }{ t^{ \frac{ \alpha + 3 }{ 4 } } } $.

\item[II/] $ \omega > \frac{ 4 }{ t^{ 1/4 } } $.

In this case we set $ \varepsilon := \frac{ 1 }{ t^{ \frac{ \alpha + 3 }{ 8 } } } $ thus obtaining that
\[
\omega > 4 \varepsilon^{ \frac{ 2 }{ \alpha + 3 } } > 4 \varepsilon,
\]
since $ \frac{ 2 }{ \alpha + 3 } < 1 $.

The integral $ I_{ int } $ from \eqref{asp_int_sum} is directly estimated as follows:
\begin{equation*}
| I_{ int } | \lesssim  \int\limits_{ D_{ \varepsilon } \backslash B_1( 0 ) }| f( \lambda )| \lesssim  \varepsilon^2 \lesssim \frac{ 1 }{ t^{ \frac{ \alpha + 3 }{ 4 } } }.
\end{equation*}

Similarly, the integrals $ I_{2}^{ + } $, $ I_{3}^{+} $ are estimated as in Case 2, $ u \in \mathbb{U} $.

Now we pass to estimating integrals $ I_1 $, $ I_2^- $, $ I_3^- $. Similarly to the previous case we split each of these integrals into a sum of six integrals and perform the change of variables of the form $ \lambda = \lambda^{ ( j ) } + \rho e^{ i \varphi } $.

In order to estimate $ I_1 $ we use the following estimates valid on $ \partial B_{ \varepsilon }(\lambda^{ (k) }) \cap \Omega^{ ( k ) } $:
\begin{equation*}
( | \lambda |^2 - 1 )^{ \alpha } \lesssim ( \omega + \varepsilon )^{ \alpha } \lesssim \omega^{ \alpha }, \quad | \lambda |^4 - 1 \lesssim \omega + \varepsilon \lesssim \omega,
\end{equation*}
\[
|f(\la)| \lesssim \omega^{\al+1}, \qquad |f_\la(\la)| \lesssim (1+|\bt|)\omega^{\al},
\]
and\footnote{The worst case corresponds to $\omega$ being close to 1, and $|\la_2|$ is chosen close to 1}
\[
 | S_{ \lambda } |\sim   \frac{ 1 }{ |\lambda|^4 }  |\lambda - \lambda^{(1)}|  |\lambda - \lambda^{(2)}| |\lambda - \lambda^{(3)}|  |\lambda - \lambda^{(4)}|   |\lambda - \lambda^{(5)}||\lambda - \lambda^{(6)}|   \gtrsim \varepsilon \omega^2.
\]
This allows us to obtain
\bee
| I_1 | & \lesssim&  \frac{ 1 }{ t } \frac{ \omega^{ \alpha + 1 } \varepsilon }{ \varepsilon \omega^2 } \\
& \lesssim &  \frac{ 1 }{ t \omega^{ 1 - \alpha } } \\
& \lesssim  & \frac{ 1 }{ t \varepsilon^{ \frac{2 ( 1 - \alpha )}{\al+3} } } \\
& = &  \frac 1{t^{1- \frac{1-\al}{4}}} \lesssim  \frac{ 1 }{ t^{ \frac{ \alpha + 3 }{ 4 } } }.
\eee

In order to estimate $ I_2^{ -, k } $, $ I_3^{-,k} $ we use the following estimates on $ \partial B_{ \rho }(\la^{(k)}) \cap \Omega^{ ( k ) } $:
\begin{align*}
& ( | \lambda |^2 - 1 )^{ \alpha } \lesssim \begin{cases} & \omega^{ \alpha }, \text{ if } 4 \rho < \omega \\ & \rho^{ \alpha }, \text{ if } 4 \rho \geq \omega \end{cases}, \quad | \lambda |^4 - 1 \lesssim \begin{cases} & \omega, \text{ if } 4 \rho < \omega \\ & \rho, \text{ if } 4 \rho \geq \omega \end{cases}.
\end{align*}
For the term $S_{ \lambda }$, we use the fact that  in the considered region,
\bee
S_{ \lambda } & \sim & \frac{ 1 }{ |\lambda|^4 }  |\lambda - \lambda^{(1)}|  |\lambda - \lambda^{(2)}| |\lambda - \lambda^{(3)}|  |\lambda - \lambda^{(4)}|   |\lambda - \lambda^{(5)}||\lambda - \lambda^{(6)}|,
\eee
to conclude that
\begin{align*}
& | S_{ \lambda } | \gtrsim \begin{cases} & \rho \omega^2, \text{ if } 4 \rho < \omega \\ & \rho^3, \text{ if } 4 \rho \geq \omega \end{cases}.
\end{align*}
Finally, following the same approach as before, we have
\begin{align*}
\left| \frac{ S_{ \lambda \lambda } }{ ( S_{ \lambda } )^2 } \right| \lesssim \begin{cases} & \frac{ 1 }{ \rho^2 \omega^2}, \text{ if } 4 \rho < \omega \\ & \frac{ 1 }{ \rho^4 }, \text{ if } 4 \rho \geq \omega \end{cases}.
\end{align*}
In this manner we get the following
\begin{equation*}
| I_2^{ -,k } | \lesssim \frac{ 1 }{ t } \int\limits_{ \omega / 4 }^{ \varepsilon_0 } \frac{ \rho^{ \alpha + 1 } d \rho }{ \rho^3 } + \frac{ 1 }{ t } \int\limits_{ \varepsilon }^{ \omega / 4 } \frac{ \omega^{ \alpha } \rho d \rho }{ \rho \omega^2 } \lesssim \frac{ 1 }{ t \omega^{ 1 - \alpha } } \lesssim \frac{ 1 }{ t^{ \frac{ \alpha + 3 }{ 4 } } }.
\end{equation*}
and
\begin{equation*}
| I_3^{-, k } | \lesssim \frac{ 1 }{ t } \int\limits_{ \omega / 4 }^{ \varepsilon_0 } \frac{ \rho^{ \alpha + 2 } d \rho }{ \rho^4 } + \frac{ 1 }{ t } \int\limits_{ \varepsilon }^{ \omega / 4 } \frac{ \omega^{ \alpha + 1 } \rho d \rho }{ \rho^2 \omega^2 } \lesssim \frac{ | \ln \varepsilon | }{ t \omega^{ 1 - \alpha } } \lesssim \frac{ | \ln t | }{ t^{ \frac{ \alpha + 3 }{ 4 } } }.
\end{equation*}

Thus we have proved that in this case

\begin{equation*}
| I | \lesssim \frac{ | \ln t | }{ t^{ \frac{ \alpha + 3 }{ 4 } } },
\end{equation*}
as desired.
\end{itemize}

\subsection{Proof of the estimate for small $ t $}
\label{small_t_subsection}
We will suppose that $ t < e^{ - \frac{ 3 }{ 1 - \alpha } } $ (which implies, in particular, that $ t < 1 $). Note that in this case $ \frac{ 1 }{ t^{ \frac{ 1 }{ 3 } } } > e^{ \frac{ 1 }{ 1 - \alpha } } $ and thus for all $ \omega \geq \frac{ 1 }{ t^{ \frac{ 1 }{ 3 } } } $ we have that $ \frac{ \ln \omega }{ \omega^{ 1 - \alpha } } $ is a decreasing function of $ \omega $ and thus the following inequality is true: $ \frac{ \ln \omega }{ \omega^{ 1 - \alpha } } \leq \frac{ | \ln t | }{ 3 t^{ \frac{ \alpha - 1 }{ 3 } } } $ for all $ \omega $ satisfying the above-mentioned condition.\footnote{This assumption is used to treat $ \ln \omega $ arising in (\ref{log_one})-(\ref{log_two}). It can be replaced by the assumption that $ t < 1 $. Then, we can note, for example, that for all $ \varepsilon > 0 $ and $ \omega \geq 1 $ we have that $ \ln \omega \lesssim \omega^{ \varepsilon } $. Then at the end we obtain the following estimate: $ | I | \lesssim \frac{ 1 }{ t^{ \frac{ \alpha + \varepsilon + 2 }{ 3 } } } $ . Then, if $ \varepsilon < \frac{ 1 - \alpha }{ 4 } $, we also have that $ | I | \lesssim \frac{ 1 }{ t^{ \frac{ \alpha + 3 }{ 4 } } } $.}

We consider the following cases for the values of parameter $ u $.
\begin{enumerate}
\item All the stationary points lie in $ B_2( 0 ) $.
\item Stationary points $ \lambda_0 $, $ - \lambda_0 $ lie outside the ball $ B_2( 0 ) $.
\end{enumerate}

\subsubsection{Case 1}
In this case we set $ D_r = B_r( 0 ) $ with $ r = \frac{ 4 }{ t^{ 1/3 } } $ (note that $ r \geq 4 $) and we split the integral $ I $ into two parts:
\begin{equation*}
I = \int\limits_{ D_r \backslash B_1( 0 ) } + \int\limits_{ \mathbb{C} \backslash D_r } : = I_{ int } + I_{ ext }.
\end{equation*}
Evidently, we have
\begin{equation*}
| I_{ int } | \lesssim r^{ \alpha + 2 }.
\end{equation*}
Integrating $ I_{ ext } $ by parts, we represent it as a sum of three integrals (see page 12):
\begin{equation*}
I_{ ext } = I_1 + I_2 + I_3.
\end{equation*}
On $ \mathbb{C} \backslash D_r $ the following is true:
\begin{equation*}
| S_{ \lambda } | \sim | \lambda |^2.
\end{equation*}
Indeed,
\begin{align*}
& \left| \frac{ \lambda - \lambda_j }{ \lambda } \right| \geq 1 - \frac{ | \lambda_j | }{ | \lambda | } \geq 1 - \frac{ 2 }{ r } \geq \frac{ 1 }{ 2 }, \\
& | \lambda - \lambda_j | \geq | \lambda | - | \lambda_j | \geq | \lambda | - 2 \geq | \lambda | - \frac{ | \lambda | }{ 2 } = \frac{ | \lambda | }{ 2 }.
\end{align*}
In a similar way we have that
\begin{equation*}
\left| \frac{ S_{ \lambda \lambda } }{ ( S_{ \lambda } )^2 } \right| \lesssim \frac{ 1 }{ | \lambda |^3 }
\end{equation*}
This allows us to obtain the following estimates:
\begin{align*}
& | I_1 | \lesssim \frac{ r^{ \alpha } }{ t r }, \\
& | I_2 | \lesssim \frac{ 1 }{ t } \int\limits_{ r }^{ \infty } \frac{ \rho^{ \alpha - 1 } \rho }{ \rho^2 } d \rho = \frac{ 1 }{ t r^{ 1 - \alpha } }, \\
& | I_3 | \lesssim \frac{ 1 }{ t } \int\limits_{ r }^{ \infty } \frac{ \rho^{ \alpha } \rho }{ \rho^3 } d \rho = \frac{ 1 }{ t r^{ 1 - \alpha } }.
\end{align*}
Finally, taking into account that $ r \sim \frac{ 1 }{ t^{ 1/3 } } $, we get that $ | I | \lesssim \frac{ 1 }{ t^{ \frac{ \alpha + 2 }{ 3 } } } $.

\subsubsection{Case 2}
In this case we consider two subcases
\begin{enumerate}
\item $ 1 \leq \omega < \frac{ 1 }{ t^{ 1/3 } } $
In this case we take $ r = \frac{ 4 }{ t^{ 1/3 } } $ and set $ D_r = B_r( 0 ) $. Note that on $ \mathbb{C} \backslash D_r $ the following estimates hold:
\begin{align*}
& \frac{ | \lambda - \lambda_j | }{ | \lambda | } \geq 1 - \frac{ | \lambda_j | }{ | \lambda | } \geq 1 - \frac{ 1 + \omega }{ r } \geq \frac{ 1 }{ 2 }, \\
& | \lambda - \lambda_j | \geq | \lambda | - | \lambda_j | \geq | \lambda | - ( 1 + \omega ) \geq | \lambda | - \frac{ r }{ 2 } \geq \frac{ | \lambda | }{ 2 }.
\end{align*}
Thus, in this case the estimate can be carried out by using the reasoning of the previous case.

\item $ \frac{ 1 }{ t^{ 1/3 } } \leq \omega $

In this case we choose $ r = \min\left( \frac{ 1 }{ t^{ \frac{ \alpha + 2 }{ 6 } } \omega^{ \frac{ \alpha }{ 2 } } }, \frac{ 1 }{ 4 } \right) $ and we set $ D_r $ to be the union of $ B_{ 1 + \frac{ 1 }{ 4 } }( 0 ) $ and $ B_{ r }( \pm \lambda_0 ) $. Note that if $ \frac{ 1 }{ t^{ \frac{ \alpha + 2 }{ 6 } } \omega^{ \frac{ \alpha }{ 2 } } } > \frac{ 1 }{ 4 } $ (and hence $ r =\frac{ 1 }{ 4 } $), we have that $ \omega^{ \alpha } \lesssim \frac{ 1 }{ t^{ \frac{ \alpha + 2 }{ 3 } } } $. Further, we split $ I $ into $ I_{ int } $ and $ I_{ ext } $ as above. First of all, note that
\begin{equation*}
| I_{ int } | \lesssim 1 + \omega^{ \alpha } r^2 \lesssim \frac{ 1 }{ t^{ \frac{ \alpha + 2 }{ 3 } } }.
\end{equation*}

Now we introduce the following notation: $ \mathcal{B}_{ \rho } = B_{ 1 + \rho }( 0 ) \cup B_{ \rho }( \pm \lambda_0 ) $. Note that
\begin{equation*}
\mathbb{C} \backslash D_{ r } = ( \mathbb{C} \backslash \mathcal{B}_{ 1/4 } ) \cup ( B_{ 1/4 }( \pm \lambda_0 ) \backslash B_r( \pm \lambda_0 ) ).
\end{equation*}
Each of the integrals $ I_k $, $ k = 1, 2, 3 $, is split into the sum of two integrals $ I_k^- + I_k^+ $, where $ I_k^- $ is the integral over the set $ B_{ 1/4 }( \pm \lambda_0 ) \backslash B_r( \pm \lambda_0 ) $ and $ I_k^+ $ is the integral over $ \mathbb{C} \backslash \mathcal{B}_{ 1/4 } $. \footnote{To be more precise, if $ r < \frac{ 1 }{ 4 } $, then $ I_1^- $ is the integral over $  \partial D_r \cap B_{ 1/4 }( \pm \lambda_0 ) \backslash B_r( \pm \lambda_0 ) $, i.e. over $ \partial B_r( \pm \lambda_0 ) $ and $ I_1^+ $ is the integral over $ \partial D_r \cap \mathbb{C} \backslash \mathcal{B}_{ 1/4 } $, i.e. over $ \partial B_{ 1 + 1/4 }( 0 ) $. Note that if $ r \geq \frac{ 1 }{ 4 } $, then $ I_k \equiv I_k^+ $ and $ I_k^- $ are formally not defined in this case, $ k = 1, 2, 3 $.}

Note that on $ \partial \mathcal{B}_{ \rho } $ with $ \rho \geq \frac{ 1 }{ 4 } $ and on $ \partial B_{ \rho }( \pm \lambda_0 ) $ with $ r \leq \rho < \frac{ 1 }{ 4 } $, the following estimates are true:
\begin{equation*}
\frac{ | \lambda - \lambda_{st} | }{ | \lambda | } \geq 1 - \frac{ | \lambda_{st} | }{ | \lambda | } \gtrsim 1,
\end{equation*}
where $ \lambda_{st} = \pm \lambda_{ 1 } $ or $ \lambda_{st} = \pm \lambda_{ 2 } $. Thus we have that on $ \partial \mathcal{B}_{ \rho } $ with $ \rho \geq \frac{ 1 }{ 4 } $ and on $ \partial B_{ \rho }( \pm \lambda_0 ) $ with $ r \leq \rho < \frac{ 1 }{ 4 } $ the following holds: $ | S_{ \lambda } | \gtrsim | \lambda - \lambda_0 || \lambda + \lambda_0 | $. Using this fact we can obtain the following estimates valid on $ \partial \mathcal{B}_{ \rho } $ with $ \rho \geq \frac{ 1 }{ 4 } $ and on $ \partial B_{ \rho }( \pm \lambda_0 ) $ with $ r \leq \rho < \frac{ 1 }{ 4 } $:
\begin{equation*}
| S_{ \lambda } | \gtrsim \begin{cases} & \omega \rho, \quad 4 \rho < \omega, \\ & \rho^2, \quad 4 \rho \geq \omega, \end{cases} \quad \left| \frac{ S_{ \lambda \lambda } }{ ( S_{ \lambda } )^2 } \right| \lesssim \begin{cases} & \frac{ 1 }{ \omega \rho^2 }, \quad 4 \rho < \omega, \\ & \frac{ 1 }{ \rho^3 }, \quad 4 \rho \geq \omega. \end{cases}
\end{equation*}

\begin{figure}
\begin{center}
\begin{tikzpicture}[
	>=stealth',
	axis/.style={semithick,->},
	coord/.style={dashed, semithick},
	yscale = 2.0,
	xscale = 2.0]
	\newcommand{\xmin}{-2.0};
	\newcommand{\xmax}{2.0};
	\newcommand{\ymin}{-2.0};
	\newcommand{\ymax}{2.0};
	\newcommand{\ta}{3};
	\newcommand{\fsp}{0.2};
	\filldraw[color=light-gray2] (0,0) circle (1.25);
	\filldraw[color=light-gray1] (0,0) circle (1);
    \draw [dotted] (0,0) circle (1.45);
    \filldraw[color=light-gray2] (1.4,1.4) circle (0.1);
    \draw (1.4,1.4) circle (0.17);
    \draw [dashed] (1.4,1.4) circle (0.25);
    \draw [dotted] (1.4,1.4) circle (0.45);
    \filldraw[color=light-gray2] (-1.4,-1.4) circle (0.1);
    \draw (-1.4,-1.4) circle (0.17);
    \draw [dashed] (-1.4,-1.4) circle (0.25);
    \draw [dotted] (-1.4,-1.4) circle (0.45);
	\draw [axis] (\xmin-\fsp,0) -- (\xmax,0) node [right] {$\re \la$};
	\draw [axis] (0,\ymin-\fsp) -- (0,\ymax) node [below left] {$\ima \la$};
	\draw (-0.9,-0.2) node [left] {$1$};
	\draw (1.9,0.9) node [right] {$\la_0$};
	\draw (-1.9,-0.9) node [left] {$-\la_0$};
	\draw (-1.5,1.6) node [above] {$\la_2, -\la_2$};
	\draw (-2.2,0.1) node [above] {$\la_1,-\la_1$};
	\draw [dashed] (-1.5,1.5) -- (0.3,0.3);
	\draw [dashed] (-1.5,1.5) -- (-0.3,-0.3);
	\draw [dashed] (-1.8,0.2) -- (0,1);
	\draw [dashed] (-1.8,0.2) -- (0,-1);
    \draw [dashed] (1.4,1.4) -- (1.9,0.9);
    \draw [dashed] (-1.4,-1.4) -- (-1.9,-0.9);
	\fill (0.3,0.3)  circle[radius=1pt];
	\fill (-0.3,-0.3)  circle[radius=1pt];
	\fill (0,1)  circle[radius=1pt];
	\fill (0,-1)  circle[radius=1pt];
	\fill (1.4,1.4)  circle[radius=1pt];
	\fill (-1.4,-1.4)  circle[radius=1pt];
	\draw [->] (0,0) -- (1.4,1.4);
\end{tikzpicture}
\end{center}
\caption{The setting for Case 2, subcase 2. The set $ \mathcal{B}_r $ is the shadowed region. The black circles form $ \partial B_{ \rho }( \pm \lambda_0 ) $ with $ r \leq \rho < \frac{ 1 }{ 4 } $, the dashed circles are $ \partial B_{ 1/4 }( \pm \lambda_0 ) $ and the dotted circles form $ \partial \mathcal{B}_{ \rho } $ with $ \rho \geq \frac{ 1 }{ 4 } $.}
\end{figure}
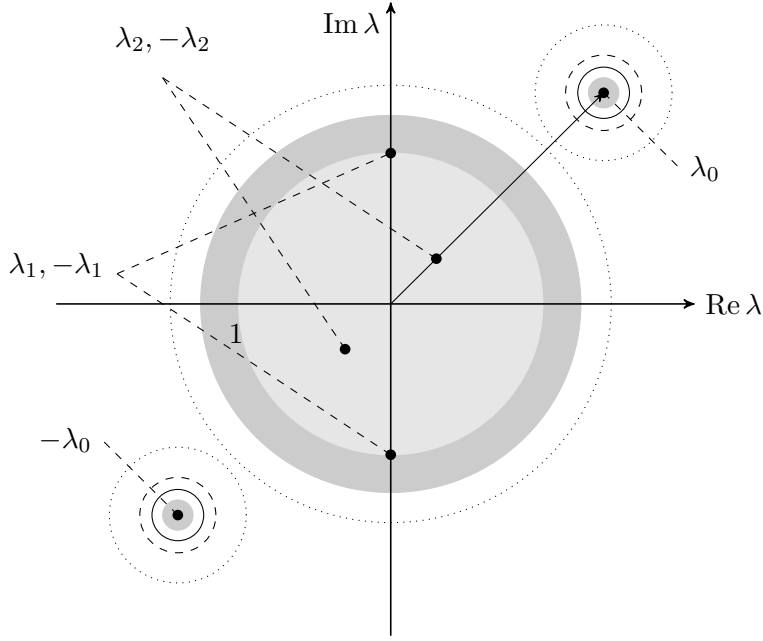
\noindent

Using these estimates, we are brought to the following estimates on the parts of $ I_{ ext } $:
\begin{align*}
& | I_1^+ | \lesssim \frac{ \omega^{ \alpha } }{ t \omega } = \frac{ 1 }{ t \omega^{ 1 - \alpha } } \leq \frac{ 1 }{ t^{ \frac{ \alpha + 2 }{ 3 } } }, \\
& | I_1^- | \lesssim \frac{ \omega^{ \alpha } r }{ t \omega r } = \frac{ 1 }{ t \omega^{ 1 - \alpha } } \leq \frac{ 1 }{ t^{ \frac{ \alpha + 2 }{ 3 } } }, \end{align*}
\begin{align*}
& | I_2^+ | \lesssim \int\limits_{ 1/4 }^{ \omega / 4 } \frac{ \omega^{ \alpha - 1 } \rho d \rho }{ t \omega \rho } + \int\limits_{ \omega / 4 }^{ \infty } \frac{ \rho^{ \alpha - 1 } \rho d \rho }{ t \rho^2 } \lesssim \frac{ 1 }{ t \omega^{ 1 - \alpha } } \leq \frac{ 1 }{ t^{ \frac{ \alpha + 2 }{ 3 } } }, \\
& | I_2^- | \lesssim \int\limits_{ r }^{ 1/4 } \frac{ \omega^{ \alpha - 1 } \rho d \rho }{ t \omega \rho } \lesssim \frac{ 1 }{ t \omega^{ 2 - \alpha } } + \frac{ r }{ t \omega^{ 2 - \alpha } } \leq \frac{ 1 }{ t^{ \frac{ \alpha + 2 }{ 3 } } },
\end{align*}
\begin{align}
\label{log_one}
& | I_3^+ | \lesssim \int\limits_{ 1/4 }^{ \omega / 4 } \frac{ \omega^{ \alpha } \rho d \rho }{ t \omega \rho^2 } + \int\limits_{ \omega / 4 }^{ \infty } \frac{ \rho^{ \alpha } \rho d \rho }{ t \rho^3 } \lesssim \frac{ \ln \omega }{ t \omega^{ 1 - \alpha } } \lesssim \frac{ | \ln t | }{ t^{ \frac{ \alpha + 2 }{ 3 } } }, \\
\label{log_two}
& | I_3^- | \lesssim \int\limits_{ r }^{ 1 / 4 } \frac{ \omega^{ \alpha } \rho d \rho }{ t \omega \rho^2 } \lesssim \frac{ | \ln r | }{ t \omega^{ 1 - \alpha } } \lesssim \frac{ | \ln t | }{ t^{ \frac{ \alpha + 2 }{ 3 } } }.
\end{align}

\end{enumerate}

\section{Bilinear Estimates}

The purpose of this section is to show bilinear estimates for the NV equation for negative energy, always taking into account the size of the fixed energy.

In this section we use the following notations. We denote by $ \langle f\rangle $ the japanese bracket:
\[
\langle f\rangle := (1+|f|^2)^{1/2};
\]
$\mathcal F[ u ]$ and $ \hat u $ both denote the Fourier transform of $ u $ in $(t,x,y)$; we also have $ A \land B : = \min( A, B ) $ and $ A \lor B : = \max( A, B ) $. Variables $ N, \tilde N, \check N, L, \tilde L, \check L $ of this section are \emph{dyadic}, i.e. their range is $ \{ 2^k, k \in \mathbb{N} \} $.

\medskip

Now we introduce the associated $X^{s,b}$ spaces \cite{B1} for the NV dynamics.  Let $\tilde\varphi \in C_0^\infty(\R;[0,1])$ be a cutoff function such that
\[
\tilde\varphi(s) =0 \quad \hbox{ for } \quad |s| \geq 1, \qquad \tilde\varphi(s) = 1 \quad \hbox{ for } \quad |s|\leq \frac12.
\]
We define
\[
\varphi(s):= \tilde\varphi(s) -\tilde\varphi(2s).
\]
We introduce the frequency projection operators at a dyadic frequency $N >1$, as follows:
\[
\varphi_N (s):= \varphi(N^{-1} s),
\]
and for $N=1$,
\[
\varphi_1 (s):= \tilde\varphi(s).
\]
Using these multipliers, we have for the Fourier transform in the $\xi$ variable,
\[
P_N u := \mathcal F^{-1}\Big[\varphi_N (|E|^{-1/2} |\xi|) \mathcal F[u](\xi)\Big].
\]
Recall the definition of the phase $\tilde S(u,\xi)$ in \eqref{st_phase}. We define
\[
w(\xi,\bar\xi;E) := ( \xi^3 + \bar \xi^3 )\left( 1 - \frac{ 3 E }{ |\xi|^2 } \right).
\]
Note that the above expression is real-valued. In order to perform some Fourier analysis, we need $w$ in terms of real-valued coordinates. Put $\xi=\xi_1+i\xi_2$. We have
\be\label{w0}
w (\xi, \bar \xi) := w(\xi,\bar\xi; E) = 2( \xi_1^3 -3\xi_1\xi_2^2)\left( 1 + \frac{ 3|E|}{ \xi_1^2+\xi_2^2 } \right).
\ee
Compared with the work of Angelopoulus \cite{A}, the $NV_\pm$ symbol has a nontrivial component which becomes important at low frequencies\footnote{Although compared with the usual KP symbols, the NV symbols are still bounded at the origin.}.  We also define
\[
\sigma(\tau,\xi_1,\xi_2):= \tau -w(\xi_1,\xi_2).
\]
We introduce then
\[
Q_L u := \mathcal F^{-1}[ \varphi_L(|E|^{-3/2}|\sigma|) \mathcal F[u](\tau,\xi)].
\]
Finally, for a fixed energy $E$, we say that $u=u(t,x,y) \in X_E^{s,b}$ for $s,b\in \R$ if $u\in L^2(\R^3)$ and its Fourier transform $\hat u$ satisfies the integral condition
\be\label{Xsb}
\int_{\R^3} \langle \sigma\rangle^{2b} \langle |\xi| \rangle^{2s} |\hat u(\tau,\xi_1,\xi_2)|^2 d\tau d\xi_1d\xi_2 <+\infty.
\ee

\begin{prop}
Assume that $E<0$. Then we have for $\ve>0$ small and $s>\frac12$,
\be\label{Bilinear}
\|\partial_z(vw)\|_{X_E^{s,-1/2-2\ve}} \lesssim  |E|^{(3-4s)/8}  ~\|v\|_{X_E^{s,1/2+ \ve}} \|w\|_{X_E^{s,1/2+ \ve}},
\ee
for all $v,w$ such that the right hand side makes sense.
\end{prop}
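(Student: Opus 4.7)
My plan is to prove the estimate by the standard duality/dyadic decomposition scheme of Bourgain, following the format used by Molinet--Pilod and Angelopoulos. First, by duality, \eqref{Bilinear} is equivalent to showing the trilinear bound
\begin{equation*}
\Big|\int_{\Gamma} |\xi_3|\, \widehat{v}(\tau_1,\xi_1)\, \widehat{w}(\tau_2,\xi_2)\, \widehat{u}(\tau_3,\xi_3)\, d\mu\Big| \lesssim |E|^{(3-4s)/8}\, \|v\|_{X^{s,1/2+\ve}_E}\|w\|_{X^{s,1/2+\ve}_E}\|u\|_{X^{-s,1/2+2\ve}_E},
\end{equation*}
where $d\mu$ is the convolution measure on $\Gamma=\{\tau_1+\tau_2+\tau_3=0,\ \xi_1+\xi_2+\xi_3=0\}$. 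I then perform a Littlewood--Paley decomposition of each factor into dyadic frequency blocks $P_{N_j}$ and modulation blocks $Q_{L_j}$ (scaled by $|E|^{1/2}$ and $|E|^{3/2}$ respectively as in the definition of $X^{s,b}_E$), so that the estimate reduces to summable dyadic pieces indexed by $(N_1,N_2,N_3,L_1,L_2,L_3)$. By the convolution constraint $N_{\max}\lesssim N_{\mathrm{med}}$, and by symmetry between $v$ and $w$, I may assume $N_1\le N_2$, leaving two geometric regimes: high-high $\to$ anything ($N_1\sim N_2$) and low-high $\to$ high ($N_1\ll N_2\sim N_3$).

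The central analytical input is the resonance identity
\begin{equation*}
\Omega:=\sigma_1+\sigma_2+\sigma_3 = -\bigl[w(\xi_1)+w(\xi_2)+w(\xi_3)\bigr],
\end{equation*}
which forces $L_{\max}\gtrsim |\Omega|$. With the NV symbol \eqref{w0}, one computes $\Omega$ explicitly and obtains the familiar factorization structure, which in the low-high regime yields a lower bound of the form $|\Omega|\gtrsim |E|^{3/2}\,\widetilde{N}_1 N_2^2$ away from a small zero set; the $|E|$ factor comes from the rescaling built into the energy change of variables. The main subtlety here, flagged in the introduction, is that the zero set of $\Omega$ must be controlled via partial derivatives of the resonance function rather than by a direct evaluation, and this will cost a factor that I expect to be around half a derivative but which is absorbed by the choice $s>\tfrac12$.

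For the high-high $\to$ high (and high-high $\to$ low) interactions I apply the $L^4_{t,x,y}$ Strichartz estimate \eqref{Smoothing2} to the two high-frequency factors, keeping the third in $L^2$. This yields
\begin{equation*}
\|P_{N_1}Q_{L_1}v_1\cdot P_{N_2}Q_{L_2}v_2\|_{L^2_{t,x,y}} \lesssim |E|^{-0^+} (N_1 N_2)^{-1/4^+} L_1^{1/2}L_2^{1/2}\prod_j \|P_{N_j}Q_{L_j}v_j\|_{L^2},
\end{equation*}
which gives a total gain of almost one derivative when paired with the $|\xi_3|$ weight. Combined with the $N_3^s$ loss from the $X^{s,\cdot}$-norm on $u$, and the worst-case $\xi$-weight accounting $|\xi_3|\lesssim N_{\max}$, the exponent $3/2-2s$ appears; squaring and redistributing the $|E|$ factors built into $X^{s,b}_E$ through the dyadic rescaling produces exactly the prefactor $|E|^{(3-4s)/8}$, which is the algebraic check I would carry out carefully at the end.

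For the low-high $\to$ high regime ($N_1\ll N_2\sim N_3$) I split into two subcases according to whether $L_{\max}\ge c|E|^{3/2}N_1 N_2^2$ (the resonant part) or not (the non-resonant part). In the resonant subcase, the large modulation gives spare powers of $L_{\max}^{-1/2-2\ve}$ which I trade against $|\xi_3|$ and the gap $N_2-N_1$ to close the estimate by Cauchy--Schwarz and a Tao-type $[k;Z]$ multiplier counting. In the non-resonant subcase, where one must integrate over the zero set of $\Omega$ with respect to the frequency variables, I follow the scheme of Molinet--Pilod: use lower bounds on $\partial_{\xi_i}\Omega$ to bound the measure of a neighborhood of the zero set and obtain a sharp $L^2$-bilinear Strichartz estimate for $P_{N_1}U(t)v_1\cdot P_{N_2}U(t)v_2$ in this regime. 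I expect this to be the main technical obstacle, since the rational NV symbol complicates the derivative estimates compared with the polynomial ZK case; however, because $s>\tfrac12$ one can afford a half-derivative loss here, which should make the scheme go through. Summing the dyadic estimates, with $\ve$ chosen small enough to absorb logarithms from the low-frequency projection, yields \eqref{Bilinear}.
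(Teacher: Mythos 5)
Your proposal follows essentially the same route as the paper's proof: duality, dyadic decomposition in frequency and modulation, the $L^4$ Strichartz estimate with $1/4^-$ derivative gain from Lemma \ref{asp_lin_estimate_lemma} (i.e.\ \eqref{Smoothing2}) for the high-high interactions, and, for the low-high interactions, a bilinear $L^2$ estimate obtained by bounding the measure of the resonance region through lower bounds on the partial derivatives of the resonance function, which is exactly the mechanism behind \eqref{Strichartz_Mod}. The only shaky aside is the claimed ``familiar factorization'' giving $|\Omega|\gtrsim |E|^{3/2}N_1N_2^2$ off a small set --- no such usable factorization is available for the rational NV symbol, and the paper deliberately avoids arguing on $\Omega$ directly --- but since your non-resonant case rests on the derivative-of-resonance measure bound and your resonant case uses only the size of $L_{\max}$, this remark is not load-bearing and the scheme matches the paper's (modulo the deferred $|E|$ bookkeeping, which the paper carries out explicitly).
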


\begin{proof}
We follow closely the ideas from \cite{MP}, using  a modified version of the original ideas by Bourgain \cite{B1} and Kenig, Ponce and Vega \cite{KPV1}. As usual, by duality we are lead to prove that
\[
J:= \int_{\R^6} K[\tau,\tilde\tau,\xi,\tilde\xi]\hat u(\tau,\xi_1,\xi_2)\hat v(\check\tau,\check\xi_1,\check\xi_2)\hat w(\tilde\tau,\tilde\xi_1,\tilde\xi_2)\, d\xi_1 \, d\tilde\xi_1\, d\xi_2 \, d\tilde\xi_2 \, d\tau \, d\tilde\tau
\]
satisfies
\[
|J| \lesssim \|u\|_{L^2} \|v\|_{L^2} \|w\|_{L^2},
\]
for any $u,v,w\in L^2(\R^3)$, and where for $j=1,2$, $\tilde\sigma := \sigma( \tilde\tau,\tilde \xi)$,
\be\label{hat_tilde}
\check\tau := \tilde\tau -\tau, \quad \check \xi: =\tilde \xi-\xi,\quad \check\sigma:= \tilde\sigma -\sigma.
\ee
Here the kernel $K=K[\tau,\tilde\tau,\xi,\tilde\xi]$ is explicitly given by
\[
K:=|i\tilde\xi_1+\tilde\xi_2| |i\check\xi_1+\check \xi_2||-i \check \xi_1+\check \xi_2|^{-1} \langle \tilde\sigma\rangle^{-1/2+2\ve}\langle \sigma\rangle^{-1/2-\ve}\langle \check \sigma\rangle^{-1/2-\ve}\langle |\tilde \xi| \rangle^{s} \langle |\xi|\rangle^{-s}\langle |\check \xi|\rangle^{-s}.
\]
A further simplification leads to
\be\label{KernelK}
K=|\tilde \xi|  \langle \tilde\sigma\rangle^{-1/2+2\ve}\langle \sigma\rangle^{-1/2-\ve}\langle \check\sigma\rangle^{-1/2-\ve}\langle |\tilde \xi| \rangle^{s} \langle |\xi|\rangle^{-s}\langle |\check \xi|\rangle^{-s}.
\ee
Now we use dyadic decompositions to split  $J$ into several pieces. We put
\[
J= \sum_{N,\tilde N,\check N} J_{N,\tilde N,\check N},
\]
where
\be\label{JNNN}
J_{N,\tilde N,\check N} := \int_{\R^6} K[\tau,\tilde\tau,\xi,\tilde\xi] \widehat{P_Nu}(\tau,\xi_1,\xi_2)\widehat{P_{\check N}v}(\check\tau,\check\xi_1,\check\xi_2)\widehat{P_{\tilde N}w}(\tilde\tau,\tilde\xi_1,\tilde\xi_2)d\xi_1d\tilde\xi_1d\xi_2d\tilde\xi_2 d\tau d\tilde\tau.
\ee

\noindent
\emph{Estimate for low-low to low frequencies.} This is the simplest case. Here we have $N\sim \tilde N \sim \check N$, where $N\sim 1$.  From \eqref{JNNN} we have to estimate the quantity
\[
\sum_{N\sim \tilde N \sim \check N \sim 1} J_{N,\tilde N,\check N}.
\]
Note that we also have $|\xi|\sim |\tilde \xi| \sim |\check \xi| \sim |E|^{1/2}$. Therefore, using \eqref{KernelK} we have
\bee
|K|  & \lesssim &   |E|^{(1-s)/2} \langle \tilde\sigma\rangle^{-1/2+2\ve}\langle\sigma\rangle^{-1/2-\ve}\langle \check\sigma\rangle^{-1/2-\ve}\\
& \lesssim &  |E|^{(1-s)/2} \langle\sigma\rangle^{-1/2-\ve}\langle \check\sigma\rangle^{-1/2-\ve}.
\eee
Therefore, from \eqref{JNNN}, Plancherel, and using Cauchy-Schwarz,
\[
|J_{N,\tilde N,\check N}| \lesssim  |E|^{(1-s)/2}  \norm{\mathcal F^{-1}[\langle\sigma\rangle^{-1/2-\ve}  \widehat{P_Nu}]   }_{L^4}\norm{\mathcal F^{-1}[\langle\check \sigma\rangle^{-1/2-\ve}   \widehat{P_{\check N}v} ] }_{L^4} \|P_{\tilde N}w\|_{L^2}.
\]
From \eqref{Smoothing1} and the definition of $X^{s,b}$ space norm, we obtain for $\ve>0$ small,
\bee
\norm{\mathcal F^{-1}[\langle\sigma\rangle^{-1/2-\ve}  \widehat{P_Nu}]   }_{L^4} & = & \norm{U(t)U(-t)\mathcal F^{-1}[\langle\sigma\rangle^{-1/2-\ve}  \widehat{P_Nu}]   }_{L^4}  \\
& \lesssim &  |E|^{-1/16^+}  \norm{U(t)U(-t)\mathcal F^{-1}[\langle\sigma\rangle^{-1/2-\ve}  \widehat{P_Nu}]   }_{X^{0,\frac7{16}+}} \\
& \lesssim &   |E|^{-1/16^+} \| \langle\sigma\rangle^{\frac7{16}^+} \langle\sigma\rangle^{-1/2-\ve}\widehat{P_Nu} \|_{L^2} \\
&  \lesssim &  |E|^{-1/16^+} \|P_Nu \|_{L^2}.
\eee
We conclude that
\[
\sum_{N\sim \tilde N \sim \check N \sim 1} J_{N,\tilde N,\check N} \lesssim  |E|^{(3^--4s)/8} \|u \|_{L^2}\|v \|_{L^2}\|w \|_{L^2}.
\]

\medskip

\noindent
\emph{Estimate for low-high to high frequencies.} The worst case here corresponds to $N\ll \tilde N$ and $\tilde N \sim \check N$, where $\tilde N \gg 1$.  From \eqref{JNNN} we have to estimate the quantity
\[
\sum_{N\ll \tilde N, \, \tilde N \sim \check N   \gg 1} J_{N,\tilde N,\check N}.
\]
Unfortunately, a crude estimate for $K$ in \eqref{KernelK} shows that it is not possible to counter balance the term $|\tilde \xi| \sim |E|^{1/2} \tilde N$.  For this reason a dyadic decomposition on the modulation variables is necessary. Using the frequency localization operator $Q_{L}$, $Q_{\tilde L}$ and $Q_{\check L}$ on dyadic shells $\sigma \sim |E|^{3/2} L$, $\tilde \sigma \sim |E|^{3/2}\tilde L$ and so on,\footnote{The power $3/2$ of the energy in front of the modulation variable $\sigma$ is dictated by the time scaling.} we have
\[
J_{N,\tilde N,\check N} =\sum_{L,\tilde L,\check L} J_{N,\tilde N,\check N}^{L,\tilde L,\check L},
\]
where
\bea\label{JNNNLLL}
J_{N,\tilde N,\check N}^{L,\tilde L,\check L} &  := & \int_{\R^6} K[\tau,\tilde\tau,\xi,\tilde\xi] \mathcal F[P_NQ_L u](\tau,\xi_1,\xi_2) \times \nonu\\
& & \qquad \times \mathcal F[P_{\check N}Q_{\check L}v](\check\tau,\check\xi_1,\check\xi_2)\mathcal F[P_{\tilde N}Q_{\tilde L}w](\tilde\tau,\tilde\xi_1,\tilde\xi_2)\, d\xi_1d\tilde\xi_1d\xi_2d\tilde\xi_2 d\tau d\tilde\tau.
\eea
We readily have in the considered region
\[
|K[\tau,\tilde\tau,\xi,\tilde\xi]| \lesssim |E|^{-(s+7)/4} \tilde N N^{-s}  \tilde L^{-1/2+2\ve}L^{-1/2-\ve}\check L^{-1/2-\ve},
\]
and using Cauchy-Schwarz, the fact that
\[
\int_{\tau,\xi,\tilde \tau,\tilde \xi}\mathcal F[P_NQ_L u](\tau,\xi_1,\xi_2)\mathcal F[P_{\check N}Q_{\check L}v](\check\tau,\check\xi_1,\check\xi_2)
\]
represents a convolution in Fourier variables, and Plancherel, we get
\be\label{Aux_J}
|J_{N,\tilde N,\check N}^{L,\tilde L,\check L}| \lesssim  |E|^{-(s+7)/4} \tilde N  N^{-s}\tilde L^{-1/2+2\ve}L^{-1/2-\ve}\check L^{-1/2-\ve} \| P_NQ_L u ~ P_{\check N}Q_{\check L}v\|_{L^2}\|P_{\tilde N}Q_{\tilde L}w\|_{L^2}.
\ee
Now we use the following estimate (see \cite{MP} for a similar statement), valid under the assumptions that we work with low-high to high frequencies:
\be\label{Strichartz_Mod}
\| P_NQ_L u ~ P_{\check N}Q_{\check L}w \|_{L^2} \lesssim |E|^{5/4} N^{1/2} \check N^{-1} L^{1/2} \check L^{1/2}\| P_NQ_L u\|_{L^2} \| P_{\check N}Q_{\check L}w\|_{L^2}.
\ee
This estimate is proved several lines below. For now we assume the validity of this estimate and we continue with the estimation of \eqref{Aux_J}. Note that \eqref{Strichartz_Mod} allows to cancel out the bad frequency $\tilde N$ in \eqref{Aux_J}. We get
\[
|J_{N,\tilde N,\check N}^{L,\tilde L,\check L}| \lesssim  |E|^{-(2+s)/4} N^{1/2-s}\tilde L^{-1/2+2\ve}L^{-\ve}\check L^{-\ve}\| P_NQ_L u\|_{L^2} \| P_{\check N}Q_{\check L}v\|_{L^2}\|P_{\tilde N}Q_{\tilde L}w\|_{L^2}.
\]
Adding up on $N$ ($s>\frac12$), $\tilde L$, $L$ and $\check L$ we obtain
\bee
\sum_{N\ll \tilde N, \,\tilde N \sim \check N \gg 1} J_{N,\tilde N,\check N} & \lesssim  & |E|^{-(2+s)/4} \|u\|_{L^2} \sum_{\tilde N\sim \check N}  \| P_{\check N}v\|_{L^2}\|P_{\tilde N}w\|_{L^2} \\
& \lesssim  & |E|^{-(2+s)/4} \|u\|_{L^2}\|v\|_{L^2}\|w\|_{L^2}.
\eee
Let us prove \eqref{Strichartz_Mod}. Following \cite{MP}, and using the Plancherel's identity, together with Young's inequality for convolutions, we have
\bea
\| P_NQ_L u ~ P_{\check N}Q_{\check L}w \|_{L^2} & \sim & \| \mathcal F[P_NQ_L u] \star \mathcal F[ P_{\check N}Q_{\check L}w] \|_{L^2} \nonu\\
& \lesssim & \sup_{(\tilde \tau,\tilde\xi)\in\R^3} (\meas A_E(\tilde\tau,\tilde\xi))^{1/2} \| P_NQ_L u\|_{L^2} \|P_{\check N}Q_{\check L}w\|_{L^2}, \label{Est_1}
\eea
where $A_E(\tilde\tau,\tilde\xi)$ is the set (see \eqref{hat_tilde})
\[
A_E(\tilde\tau,\tilde\xi):=\big\{(\tau,\xi) \in \R^3 \ : \ |\sigma| \sim |E|^{3/2} L, \; |\check \sigma| \sim  |E|^{3/2} \check L, \;  |\xi| \sim  |E|^{1/2} N,  \; |\check \xi| \sim  |E|^{1/2} \check N \big\}.
\]
The measure of this set can be estimated as follows:
\be\label{meas_A}
\meas A_E(\tilde\tau,\tilde\xi) \lesssim  |E|^{3/2}(L \land \check L)  \meas B_E(\tilde\tau,\tilde\xi),
\ee
where
\[
B_E(\tilde\tau,\tilde\xi) := \big\{ (\tau,\xi) \in \R^3 \ : \ |\tilde \sigma + H[\xi,\tilde \xi] | \lesssim  |E|^{3/2}(L\lor \check L), \;  |\xi| \sim  |E|^{1/2} N,  \; |\check \xi| \sim  |E|^{1/2} \check N \big\},
\]
and $H$ is the standard  resonance function (see \eqref{w0})
\[
H[\xi,\tilde \xi] := w(\tilde \xi, \overline{\tilde \xi}) - w(\xi, \overline \xi) -w(\tilde \xi -\xi,\overline{\tilde \xi} -\overline\xi).
\]
We use now the idea from \cite[Lemma 3.8]{MP}: we estimate the measure of $B_E$ by finding lower bounds on the derivatives of the function $\xi \mapsto  H[\xi, \tilde \xi]$ in the considered region.

\medskip

As in \cite{A}, we have to distinguish between two cases: $|\tilde \xi_1-\xi_1|\sim |\tilde \xi_2-\xi_2| \sim |E|^{1/2} \tilde N \sim |E|^{1/2} \check N$ and $|\tilde \xi_1 -\xi_1|\gg |\tilde \xi_2 -\xi_2| $ (the remaining case is identical). For the first case we have
\bee
\partial_{\xi_2} H[\xi,\tilde \xi] & =&  -2\partial_{\xi_2} \Bigg[  ( \xi_1^3 -3\xi_1\xi_2^2)\left( 1 + \frac{ 3|E|}{ \xi_1^2+\xi_2^2 } \right) \\
&& \qquad + ( (\tilde\xi_1-\xi_1)^3 -3(\tilde\xi_1-\xi_1)(\tilde\xi_2-\xi_2)^2)\left( 1 + \frac{ 3|E|}{ (\tilde\xi_1-\xi_1)^2+(\tilde\xi_2-\xi_2)^2 } \right)\Bigg] \\
& =& -12\Bigg[  -\xi_1\xi_2\left( 1 + \frac{ 3|E|}{ \xi_1^2+\xi_2^2 } \right)  +(\tilde\xi_1-\xi_1)(\tilde\xi_2-\xi_2)\left( 1 + \frac{ 3|E|}{ (\tilde\xi_1-\xi_1)^2+(\tilde\xi_2-\xi_2)^2 } \right) \\
& &\qquad    -\frac{ |E|\xi_1 \xi_2( \xi_1^2 -3\xi_2^2)}{( \xi_1^2+\xi_2^2)^2 }   + \frac{|E|(\tilde\xi_1-\xi_1)(\tilde\xi_2-\xi_2)  ( (\tilde\xi_1-\xi_1)^2 -3(\tilde\xi_2-\xi_2)^2)}{( (\tilde\xi_1-\xi_1)^2+(\tilde\xi_2-\xi_2)^2)^2 } \Bigg].
\eee
Using the estimate
\[
\abs{\frac{a b }{ a^2+ b^2 }} \leq \frac12,
\]
and similar other estimates for the fractional terms appearing from the fact that we work with nonzero energies, and valid for all $(a,b)\in \R^2$ (the limit at the origin is not well-defined, but the functions are always bounded), we get
\[
|\partial_{\xi_2} H[\xi,\tilde \xi]| \sim |E| \check N^2.
\]
In the second case, we have no problems since
\bee
\partial_{\xi_1} H[\xi,\tilde \xi] & =& -2\partial_{\xi_1} \Bigg[  ( \xi_1^3 -3\xi_1\xi_2^2)\left( 1 + \frac{ 3|E|}{ \xi_1^2+\xi_2^2 } \right) \\
&& \qquad + ( (\tilde\xi_1-\xi_1)^3 -3(\tilde\xi_1-\xi_1)(\tilde\xi_2-\xi_2)^2)\left( 1 + \frac{ 3|E|}{ (\tilde\xi_1-\xi_1)^2+(\tilde\xi_2-\xi_2)^2 } \right)\Bigg] \\
& =& -6 \Bigg[  ( \xi_1^2 - \xi_2^2)\left( 1 + \frac{ 3|E|}{ \xi_1^2+\xi_2^2 } \right) \\
&&\qquad  - ( (\tilde\xi_1-\xi_1)^2 - (\tilde\xi_2-\xi_2)^2) \left( 1 + \frac{ 3|E|}{ (\tilde\xi_1-\xi_1)^2+(\tilde\xi_2-\xi_2)^2 } \right) \\
& &\qquad    -\frac{ 2|E|\xi_1^2( \xi_1^2 -3\xi_2^2)}{( \xi_1^2+\xi_2^2)^2 }   + \frac{2|E|(\tilde\xi_1-\xi_1)^2  ( (\tilde\xi_1-\xi_1)^2 -3(\tilde\xi_2-\xi_2)^2)}{( (\tilde\xi_1-\xi_1)^2+(\tilde\xi_2-\xi_2)^2)^2 } \Bigg],
\eee
so that
\[
|\partial_{\xi_1} H[\xi,\tilde \xi]| \sim |E| \check N^2.
\]
We conclude (see \cite{MP} for example) that
\[
\meas B_E(\tilde\tau,\tilde\xi) \lesssim |E| N  \check N^{-2} (L\lor \check L).
\]
Finally, from \eqref{meas_A} and \eqref{Est_1} we conclude.

\medskip

\noindent
\emph{Estimate for high-high to low $J_{HH\to L}$, and for high-high to high frequencies $J_{HH\to H}$.} This is the difficult part of the proof, because for obtaining \eqref{Bilinear} with $s>\frac12$ we do not have the corresponding Carbery-Kenig-Ziesler \cite{CKZ} result. Instead, we will prove that the corresponding smoothing estimate Lemma \ref{asp_lin_estimate_lemma} suffices for the case of negative energies.

\medskip

We prove the most difficult case, the one for  high-high to high frequencies (see below for a comment on the case high-high to low). Here we have $N\sim \tilde N \sim \check N$, where $N\gg 1$.  From \eqref{JNNN} we have to estimate the quantity
\be
\sum_{N\sim \tilde N \sim \check N \gg 1} J_{N,\tilde N,\check N}.
\ee
Note that we also have $|\xi|\sim |\tilde \xi| \sim |\check \xi| \sim |E|^{1/2}N$. Therefore, using \eqref{KernelK} we have
\bee
|K|  & \lesssim &  |E|^{-s/2}  |\xi| N^{-s}  \langle \tilde\sigma\rangle^{-1/2+2\ve}\langle\sigma\rangle^{-1/2-\ve}\langle \check\sigma\rangle^{-1/2-\ve}\\
& \lesssim &  |E|^{-s/2 +1/4^+}  N^{1/2^+-s}  |\xi|^{1/4^-}\langle\sigma\rangle^{-1/2-\ve}|\check\xi|^{1/4^-}\langle \check\sigma\rangle^{-1/2-\ve}.
\eee
Therefore, from \eqref{JNNN}
\bee
|J_{N,\tilde N,\check N}| & \lesssim & |E|^{-s/2 +1/4^+} N^{1/2^+-s} \norm{|\partial_z|^{1/4^-}\mathcal F^{-1}[\langle\sigma\rangle^{-1/2-\ve}  \widehat{P_Nu}]   }_{L^4} \times\\
& & \qquad \times \norm{|\partial_z|^{1/4^-}\mathcal F^{-1}[\langle\check \sigma\rangle^{-1/2-\ve}   \widehat{P_{\check N}v} ] }_{L^4} \|w\|_{L^2}
\eee
From \eqref{Smoothing2} we get
\bee
|J_{N,\tilde N,\check N}| & \lesssim &|E|^{-s/2 +1/4^+}   N^{1/2^+-s} \norm{\mathcal F^{-1}[\langle\sigma\rangle^{-1/2-\ve}  \widehat{P_Nu}]   }_{X^{0,\frac12^-}}  \times \\
&& \qquad \times \norm{\mathcal F^{-1}[\langle\check \sigma\rangle^{-1/2-\ve}   \widehat{P_{\check N}v} ] }_{X^{0,\frac12^-}} \|w\|_{L^2} \\
& \lesssim &  |E|^{-s/2 + 1/4^+ - 0^+}  N^{1/2^+-s} \|P_N u\|_{L^2}\|P_{\check N} v\|_{L^2}\|w\|_{L^2}
\eee
Adding on $N\sim \check N$, we conclude.

\medskip

Finally, some words about the case high-high to low frequencies. In this regime one has $N \sim \check N \gg \tilde N$. Note that we also have $|\xi|\sim |\check \xi| \sim |E|^{1/2}N$, and $ |\tilde \xi| \sim |E|^{1/2} \tilde N$.  Now it is enough to consider the following estimate:
\[
\frac{ | \tilde \xi | \tilde N^s }{ N^s \check N^s } \lesssim \frac{ | \xi | }{ N^s }.
\]
Therefore, using \eqref{KernelK} we have
\bee
|K|  & \lesssim &  |E|^{-s/2}  |\xi| N^{-s}  \langle \tilde\sigma\rangle^{-1/2+2\ve}\langle\sigma\rangle^{-1/2-\ve}\langle \check\sigma\rangle^{-1/2-\ve}\\
& \lesssim &  |E|^{-s/2 +1/4^+}  N^{1/2^+-s}  |\xi|^{1/4^-}\langle\sigma\rangle^{-1/2-\ve}|\check\xi|^{1/4^-}\langle \check\sigma\rangle^{-1/2-\ve},
\eee
and the rest of the proof is similar to the previous case.
\end{proof}

\bigskip

\section{Local well-posedness}
\medskip

In this section we prove Theorem \ref{LWP_neg}. Using the standard Bourgain's method, we will show the following

\begin{thm}\label{Cauchy}
Fix $E<0$. The Cauchy problem for \eqref{NV1}-\eqref{NV2} is locally well-posed in $H^s(\R^2)$ for $s>\frac12$. Moreover, the existence lifetime $T_E>0$ of a solution $v(t)$ with initial data $v_0$ satisfies
\be\label{Existence_E}
T_E\| v_0\|_{H^s}^\al  \gtrsim  |E|^{\al(4s-3)/8},
\ee
for some positive exponent $\al.$ Finally, one has for all $\eta \in [0,1]$ standard cut-off supported in the interval $[-2,2]$, $\eta \equiv 1$ on $[-1,1]$, and $s>\frac12$, and $T<T_E$,
\be\label{Global_local}
\| \eta(t/T) v(t) \|_{X_E^{s,\frac12+\ve}} \lesssim \| v_0\|_{H^s}.
\ee
\end{thm}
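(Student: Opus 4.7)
The strategy is a standard Banach fixed-point argument in the Bourgain space $X_E^{s,1/2+\ve}$, using the bilinear estimate \eqref{Bilinear} as the key nonlinear ingredient and the dispersive/smoothing estimates from the previous section packaged into the $X^{s,b}$ framework. First I would rewrite the system \eqref{NV1}--\eqref{NV2} in Duhamel form:
\[
v(t) = U(t) v_0 + \int_0^t U(t-t') \bigl[\, 2\partial_z(vw) + 2\partial_{\bar z}(v\bar w) \,\bigr](t') \, dt',
\]
where $w = -3 \partial_{\bar z}^{-1}\partial_z v$. Note that the symbol of $\partial_{\bar z}^{-1}\partial_z$ has modulus one, so $\|w\|_{X_E^{s,b}} = 3\|v\|_{X_E^{s,b}}$ for every $s,b$; moreover, since $v$ is real-valued, the second nonlinear term equals the complex conjugate of the first, so that \eqref{Bilinear} applies to both identically. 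After introducing standard time cut-offs $\eta(t)$ outside the linear term and $\eta(t/T)$ inside the Duhamel integral, the problem reduces to finding a fixed point of
\[
\Phi(v)(t) := \eta(t) U(t) v_0 + \eta(t/T) \int_0^t U(t-t') \bigl[\, 2\partial_z(vw) + 2\partial_{\bar z}(v\bar w) \,\bigr](t') \, dt'
\]
in a suitable closed ball of $X_E^{s,1/2+\ve}$.

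Second, I would invoke the standard linear and inhomogeneous $X^{s,b}$ estimates adapted to our setting:
\[
\|\eta(t) U(t) v_0\|_{X_E^{s,1/2+\ve}} \lesssim \|v_0\|_{H^s},
\]
\[
\left\| \eta(t/T) \int_0^t U(t-t') F(t') \, dt' \right\|_{X_E^{s,1/2+\ve}} \lesssim T^{\delta} \|F\|_{X_E^{s,-1/2-2\ve}},
\]
for some small $\delta = \delta(\ve) > 0$ and constants independent of $E$. These are classical consequences of Fourier multiplier manipulations and the properties of $\eta, \tilde\eta \in C_0^\infty$. Combining them with \eqref{Bilinear} applied to both nonlinear terms yields
\[
\|\Phi(v)\|_{X_E^{s,1/2+\ve}} \lesssim \|v_0\|_{H^s} + T^\delta |E|^{(3-4s)/8} \|v\|_{X_E^{s,1/2+\ve}}^2,
\]
together with the analogous difference estimate controlling $\|\Phi(v) - \Phi(v')\|_{X_E^{s,1/2+\ve}}$ by $T^\delta |E|^{(3-4s)/8} (\|v\| + \|v'\|) \|v-v'\|$ in the same norm.

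Third, setting $R = 2 C \|v_0\|_{H^s}$, the map $\Phi$ is a strict contraction on the closed ball of radius $R$ in $X_E^{s,1/2+\ve}$ as soon as $T^\delta |E|^{(3-4s)/8} \|v_0\|_{H^s} \ll 1$, which gives exactly the lifetime estimate \eqref{Existence_E} with $\alpha = 1/\delta$. The bound \eqref{Global_local} is a direct byproduct of the fixed point lying in the ball of radius $R \lesssim \|v_0\|_{H^s}$. Uniqueness in the ball follows from the difference estimate, and continuous dependence of the data-to-solution map is obtained by applying the same scheme to the difference of two solutions starting from nearby initial data.

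The main technical obstacle will be verifying that the constants in the linear and inhomogeneous $X^{s,b}$ estimates are genuinely independent of $E$, so that the only $E$-dependence in the contraction bound comes from the factor $|E|^{(3-4s)/8}$ supplied by \eqref{Bilinear}. This should follow from the rescaling \eqref{Sca1}--\eqref{Sca2} combined with the unitarity of $U(t;E)$ on $L^2$ for every $E$, but the careful $E$-book-keeping through the time cut-offs and the Duhamel integration will require attention in order to produce the clean form of \eqref{Existence_E}.
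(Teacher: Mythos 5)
Your proposal is correct and follows essentially the same route as the paper: Duhamel formulation with time cut-offs, the standard homogeneous and inhomogeneous $X^{s,b}$ linear estimates producing a $T^{\ve}$ (your $T^{\delta}$) gain, the bilinear estimate \eqref{Bilinear} supplying the factor $|E|^{(3-4s)/8}$, and a contraction on the ball of radius $2\|v_0\|_{H^s}$, which yields \eqref{Existence_E} with $\al\sim 1/\ve$ and \eqref{Global_local} as a byproduct. Your explicit remarks that $\|w\|_{X_E^{s,b}}\lesssim\|v\|_{X_E^{s,b}}$ (unimodular multiplier) and that the $\partial_{\bar z}(v\bar w)$ term is handled by conjugation are exactly the implicit steps in the paper's argument.
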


\begin{rem}
Note that in this result we give explicit dependence on the energy $E$, for further developments.
\end{rem}

\begin{proof}
The proof is standard, we only check the main lines of the proof. Assume that $v_0\in H^s$. From \eqref{NV1}-\eqref{NV2} we have the local Duhamel's formula for $t\in [0,1]$,
\[
\eta_T(t) v (t)=\eta_T(t) \mathcal T[v](t) := \eta_T(t)  U(t) v_0 + 2\eta_T(t)\int_0^t U(t-t') [\partial_z(vw)+\partial_{\bar z}(v\bar w)]dt',
\]
where $\eta =\eta(t) \in [0,1]$ is a smooth bump function with $\eta(t) =1$ for $t\in[-1,1]$, and $\eta(t) =0$ for $|t| \geq 2$, and $\eta_T(t):= \eta(t/T)$. From this identity, the standard linear estimates (see e.g. Lemma 3.1, Lemma 3.2 of \cite{G}) and \eqref{Bilinear} we have, for any $s>\frac12$,
\bee
\| \eta_T v\|_{X^{s,1/2+\ve}_E} & \leq & \|v_0\|_{H^s} + C \|\eta_T\partial_z(vw)\|_{X^{s,-1/2+ \ve}_E} \\
& \leq& \|v_0\|_{H^s} +  CT^\ve \|\eta_T \partial_z(v w)\|_{X^{s,-1/2+ 2\ve}_E} \\
& \leq & \|v_0\|_{H^s} +  C\frac{T^\ve}{ |E|^{(4s-3)/8} } \|\eta_T v\|^2_{X^{s,1/2+ \ve}_E}
\eee
Now we fix any time $T\sim  \Big(\frac{ |E|^{(4s-3)/8}}{8C \|v_0\|_{H^s}} \Big)^{1/\ve}$, and the ball
\[
\mathcal B:= \Big\{ v \in X^{s,1/2+\ve}_E  \ : \   \| \eta_T v\|_{X^{s,1/2+\ve}_E} \leq  2\|v_0\|_{H^s}  \Big\}.
\]
For $v \in \mathcal B$, one has
\[
\| \eta_T \mathcal T [v]\|_{X^{s,1/2+\ve}_E} \leq   2\|v_0\|_{H^s}.
\]
The contraction property is proved in a similar fashion. The proof is complete.
\end{proof}

\appendix

\section{Proof of \eqref{NV_xy}}\label{A}

We have
\bee
(\partial_z^3 + \partial^3_{\bar z}) v & =&  \frac18(\partial_x -i\partial_y)^3 v + \frac18(\partial_x + i\partial_y)^3 v \\
& =& \frac18 (\partial_x -i\partial_y)^2 (\partial_xv -i \partial_y v) + \frac18 (\partial_x + i\partial_y)^2 (\partial_x v + i \partial_y v) \\
& =& \frac18 (\partial_x -i\partial_y) (\partial_x^2 v -2i \partial_{xy}^2 v  - \partial_y^2 v) + \frac18 (\partial_x + i\partial_y) (\partial_x^2 v + 2i \partial_{xy}^2 v  - \partial_y^2 v) \\
& =& \frac18 (\partial_x^3 v -2i \partial_{xxy}^3 v  - \partial_{xyy}^3 v -i ( \partial_{xxy}^3 v -2i \partial_{xyy}^3 v  - \partial_y^3 v)) \\
&  & + \frac18 (\partial_x^3 v + 2i \partial_{xxy}^3 v  - \partial_{xyy}^3 v + i ( \partial_{xxy}^3 v + 2i \partial_{xyy}^3 v  - \partial_y^3 v)) \\
& =& \frac14\partial_x (\partial_x^2 v  -  3 \partial_y^2 v).
\eee
Similarly, if $w=w_1+iw_2 =(w_1,w_2)$,
\bee
\partial_z w + \partial_{\bar z}\bar w &=& \frac12 (\partial_x w-i\partial_yw) +\frac12(\partial_x \bar w +i \partial_y\bar w) \\
& =&  \partial_x w_1+  \partial_y w_2 \\
& =&  \nabla \cdot w.
\eee
Similarly,
\bee
2\partial_z (vw) +2 \partial_{\bar z}(v \bar w) & =& 4\re  \partial_z (vw)\\
& =& 2\re (\partial_x -i\partial_y) (v(w_1+iw_2)) \\
& =&  2\partial_x  (v w_1) +2 \partial_y (v w_2) \\
& =&  2\nabla .(vw).
\eee
On the other hand, \eqref{NV2} can be written as
\[
\partial_y w_{1}+\partial_x w_{2} =3\partial_y v, \quad \partial_y w_{2}- \partial_x w_{1} =3 \partial_x v,
\]
Note that one can recover $w_1$ and $w_2$ from $v$ via the identities
\[
\Delta w_1 = 3( \partial_y^2 v -\partial_x^2 v), \quad \Delta w_2 = 6\partial_{xy} v.
\]
Therefore equation \eqref{NV1} reads now
\[
\partial_t v = 2[ \partial_x (\partial_x^2 v-3\partial_y^2 v) +\nabla .(vw) -E\nabla .w],
\]
as desired.

\section{High-energy limit of NV equations}\label{B}

Consider the Novikov-Veselov system of equations written in $ ( x, y ) $ variables:
\begin{equation}
\label{NV}
 \begin{cases}
  & \partial_{t} v = 2 \partial_{xxx} v - 6 \partial_{xyy} v + 2[ \partial_{x}(v w_1) +\partial_{y}(vw_2) ] - 2 E ( \partial_{x} w_1 + \partial_{y} w_2 ), \\
  & \partial_{x} w_1 - \partial_{y} w_2 = - 3 \partial_{x} v, \\
  & \partial_{x} w_2 + \partial_{y} w_1 = 3 \partial_{y} v.
 \end{cases}
\end{equation}
(see also (\ref{NV_xy})).

Put $ E = \pm \kappa^2 $ and dilate the second variable: $ y = \kappa Y $. Then system (\ref{NV}) becomes
  \begin{subequations}
  \label{NV_dilated}
  \begin{align}
  \label{dilated1}
  & \partial_{t} v = 2 \partial_{xxx} v - 6 \frac{ 1 }{ \kappa^2 } \partial_{xYY} v + 2 \partial_{x}(v w_1) + 2 \frac{ 1 }{ \kappa }\partial_{Y}(vw_2) - 2 E \partial_{x} w_1 - 2 E \frac{ 1 }{ \kappa } \partial_{Y} w_2, \\
  \label{dilated2}
  & \partial_{x} w_1 - \frac{ 1 }{ \kappa } \partial_{Y} w_2 = - 3 \partial_{x} v, \\
  \label{dilated3}
  & \partial_{x} w_2 + \frac{ 1 }{ \kappa } \partial_{Y} w_1 = 3 \frac{ 1 }{ \kappa } \partial_{Y} v.
  \end{align}
  \end{subequations}
Formally, let us search the solution of (\ref{NV_dilated}) in the following form:
\begin{align}
 & v = v_0( x, Y ), \\
 \label{w_one}
 & w_1 = \mp 3 \kappa^2 + w_1^0( x, Y ) + w_1^1( x, Y ) \frac{ 1 }{ \kappa } + w_1^2( x, Y ) \frac{ 1 }{ \kappa^2 }, \\
 \label{w_two}
 & w_2 = w_2^0( x, Y ) + w_2^1( x, Y ) \frac{ 1 }{ \kappa } + w_2^2( x, Y ) \frac{ 1 }{ \kappa^2 }.
\end{align}
We will also suppose that
\begin{equation}
\label{assumption}
 w_j^k( x, Y ) \to 0 \quad \text{ as } \quad x \to \infty, \quad j = 1,2, \quad k = 0, 1, 2.
\end{equation}
Formulas (\ref{w_one}), (\ref{w_two}) are approximations, up to order $ -2 $, of asymptotic expansions with respect to large $ \kappa $ of solutions to equations (\ref{dilated2})-(\ref{dilated3}) such that $ w_1 \pm 3 \kappa^2 $, $ w_2 $  are localized functions.

Equating terms of order $ 0 $ in $ \kappa $ in equations (\ref{dilated2}), (\ref{dilated3}) and taking into account condition (\ref{assumption}), we obtain
\begin{equation}
 w_1^0 = - 3 v_0, \quad w_2^0 = 0.
\end{equation}

Equating terms of order $ -1 $ in $ \kappa $ in equations (\ref{dilated2}), (\ref{dilated3}) and taking into account condition (\ref{assumption}), we obtain
\begin{equation}
 w_1^1 = 0, \quad w_2^1 = 6 \partial_{x}^{-1} \partial_{Y} v_0.
\end{equation}

Finally, equating terms of order $ -2 $ in $ \kappa $ in equations (\ref{dilated2}), (\ref{dilated3}) and taking into account condition (\ref{assumption}), we get
\begin{equation}
 w_1^2 = 6 ( \partial_{x}^{-1} )^2 \partial_{Y}^2 v_0, \quad w_2^2 = 0.
\end{equation}

Thus we have
\begin{subequations}
\label{ansatz}
\begin{align}
 & v = v_0( x, Y ), \\
 & w_1 = \mp 3 \kappa^2 - 3 v  + 6 ( \partial_{x}^{-1} )^2 \partial_{Y}^2 v_0 \frac{ 1 }{ \kappa^2 }, \\
 & w_2 = 6 \partial_{x}^{-1} \partial_{Y} v_0 \frac{ 1 }{ \kappa }.
\end{align}
\end{subequations}
Now let us insert this ansatz into equation (\ref{dilated1}). It is easy to verify that the coefficient of the term of order $ 2 $ in $ \kappa $ is equal to zero, and there are no terms of order $ 1 $. Finally, equating terms of order $ 0 $ in $ \kappa $, we obtain the following equation for $ v_0 $:
\begin{equation}
\label{KP}
 \partial_{t} v_0 = 2 \partial_{xxx} v_0 - 12 v \partial_{x} v_0 \mp 24 \partial_{x}^{-1} \partial_{Y}^2 v_0.
\end{equation}

Note that if $ v_0( x, Y, t ) $ is a solution of equation (\ref{KP}), then
\begin{equation}
\label{uv_relation}
u( x, y, t ) = - v_0( -x, 2 y, \frac{ 1 }{ 2 } t )
\end{equation} is a solution of KP in its classical form
\begin{equation}
\label{KP_classic}
\partial_t u + 6 u \partial_x u + \partial_{ xxx } u = \pm 3 \partial_{ x }^{ -1 } \partial_{ y }^2 u.
\end{equation}

Thus, at high energy $ E $ limit, after an appropriate dilation of $ y $ variable, equation $ NV_+ $ becomes KPI and equation $ NV_- $ becomes KPII.


\begin{thebibliography}{90}
\bibitem{A} Angelopoulos Y. \emph{Well-posedness and ill-posedness results for the Novikov-Veselov equation}, preprint ArXiv:1307.4110.
\bibitem{BM} de Bouard A., Martel Y. \emph{Nonexistence of $ L^2 $-compact solutions of the Kadomtsev-Petviashvili II equation}. Math. Ann. 328, 525-544 (2004).
\bibitem{BS1} de Bouard A., Saut J.-C. \emph{Solitary waves of generalized Kadomtsev-Petviashvili equations}. Ann. Inst. Henri Poincar\'e, Analyse Non Lin\'eaire. 14(2), 211-236 (1997).
\bibitem{BS2} de Bouard A., Saut J.-C. \emph{Symmetries and decay of the generalized Kadomtsev-Petviashvili solitary waves}. SIAM J. Math. Anal. 28(5), 1064-1085 (1997).
\bibitem{B1} Bourgain J. \emph{On the Cauchy problem for the Kadomtsev-Petviashvili equations}, Geom. Funct. Anal. vol \textbf{3} no. 4 (1993) pp. 315--341.
\bibitem{Calderon} Calder\'on, A. P. \emph{On an inverse boundary value problem}. Seminar on Numerical Analysis and
its Applications to Continuum Physics. Soc. Brasileira de Matematica (1980), pp. 65--73.
\bibitem{CKZ} Carbery A., Kenig C.E., and Ziesler, S. \emph{Restriction for homogeneous polynomial surfaces in $\R^3$}. Trans. Amer. Math. Soc. \textbf{365} (2013), no. 5, 2367--2407.
\bibitem{Perry} Croke. R.; Mueller, J.L.; Music, M.; Perry, P.; Siltanen, S.; and Stahel, A., \emph{The Novikov-Veselov Equation: Theory and Computation}, preprint arXiv:1312.5427.
\bibitem{G} Ginibre, J. \emph{Le probl\`eme de Cauchy pour des EDP semi-lin\'eaires p\'eriodiques en variables d'{}espace}, S\'eminaire N. Bourbaki, 1994-1995, exp. no 796, pp. 163--187.
\bibitem{Gr} Grinevich P.G. \emph{Rational solitons of the Veselov--Novikov equation are reflectionless potentials at fixed energy}, Teoret. Mat. Fiz. 69(2), 307-310 (1986), translation in Theor. Math. Phys. 69, 1170-1172 (1986).
\bibitem[Gr2]{Gr2} Grinevich P.G. Scattering transformation at fixed non-zero energy for the two-dimensional Schr\"odinger operator with potential decaying at infinity. Russ. Math. Surv. 55(6), 1015--1083 (2000)
\bibitem{KN} Kazeykina A.V., Novikov R.G. \emph{A large time asymptotics for transparent potentials for the Novikov--Veselov equation at positive energy}. J. Nonlinear Math. Phys. 18(3), 377-400 (2011).
\bibitem{KN2} Kazeykina A.V., Novikov R.G. \emph{Large time asymptotics for the Grinevich--Zakharov potentials}. Bulletin des Sciences Math\'ematiques. 135, 374-382 (2011)
\bibitem{K} Kazeykina A.V.: \emph{A large time asymptotics for the solution of the Cauchy problem for the Novikov-Veselov equation at negative energy with non-singular scattering data}. Inverse Problems, 28(5), 055017 (2012).
\bibitem{K2} Kazeykina A.V. \emph{Absence of solitons with sufficient algebraic localization for the Novikov-Veselov equation at nonzero energy}. Funct. Anal. Appl., 48(1), 24-35 (2014)
\bibitem{KPV} Kenig C.E, Ponce G., and Vega L., \emph{Well-posedness of the initial value problem for the Korteweg-deVries equation}, J. Amer. Math. Soc., \textbf{4} (1991), pp. 323--347.
\bibitem{KPV_Indiana} Kenig, C.E., Ponce, G., and Vega, L. \emph{Oscillatory Integrals and Regularity of Dispersive Equations}, Indiana Univ. Math. J. 40 No. 1 (1991), 33--69.
\bibitem{KPV1} Kenig, C.E., Ponce G., and Vega L., \emph{A bilinear estimate with applications to the KdV equation}, J. Amer. Math. Soc. Volume 9, Number 2, April 1996, pp. 573--603.
\bibitem{Kis} Kiselev O.M. \emph{Asymptotics of solutions of higher-dimensional integrable equations and their perturbations} J. Math. Sci. 138(6), 6067-6230 (2006).
\bibitem{KBook} Konopelchenko, B. G. \emph{Introduction to multidimensional integrable equations. The Inverse Scattering Transform in $2+1$ dimensions}. Plenum Press, New York, 1992. x+292 pp. ISBN: 0-306-44220-5.
\bibitem{Lax} Lax, P.D. \emph{Integrals of nonlinear equations of evolution and solitary waves}. Comm. Pure Appl. Math. 21 (1968), 467--490.
\bibitem{LP} Linares, F., Pastor, A., \emph{Well-posedness for the two-dimensional modified Zakharov-Kuznetsov equation}, SIAM J. Math. Anal. Vol 41, No. 4, pp 1323--1339.
\bibitem{Manakov}  Manakov, S.V. \emph{The inverse scattering method and two-dimensional evolution equations}. Uspekhi Mat. Nauk 31(5) (1976), 245--246. (in Russian).
\bibitem{MST} Manakov S. V., Santini P. M., Takchtadzhyan L. A. \emph{An asymptotic behavior of the solutions of the
Kadomtsev-Petviashvili equations} Phys. Lett. A. 75, 451--454 (1980).
\bibitem{MP} Molinet L., Pilod D. \emph{Bilinear Strichartz estimates for the Zakharov-Kuznetsov equation and applications}, preprint arXiv:1302.2933, to appear in Ann. IHP Analyse Nonlin\'eaire.
\bibitem{Murate} Murata, M. \emph{Structure of positive solutions to $(-\Delta +V)\varphi=0$ in $\R^n$}. Duke Math. J. 53 (1986), no. 4, 869--943.
\bibitem{Nachman} Nachman, Adrian I. \emph{Global uniqueness for a two-dimensional inverse boundary value problem.}
Ann. of Math. (2) 143 (1996), no. 1, 71--96.
\bibitem{Nov} Novikov, R. G. \emph{The inverse scattering problem on a fixed energy level for the two-dimensional Schr\"odinger operator}. J. Funct. Anal. \textbf{103} (1992), no. 2, 409--463.
\bibitem{NV} Novikov, S.P., Veselov, A.P. (1984), \emph{Finite-zone, two-dimensional, potential Schr\"odinger operators. Explicit formula and evolutions equations}, Sov. Math. Dokl. 30: 588--591.
\bibitem{Perry2} Perry, P. \emph{Miura maps and inverse scattering for the Novikov-Veselov equation}. Preprint,
arXiv.math/1201.2385[math.AP].
\bibitem{Saut} Saut, J.-C., \emph{Remarks on the generalized Kadomtsev-Petviashvili equations}, Indiana Univ. Math. Journal, vol. 42, No. 3 (1993), pp. 1011--1026.
\bibitem{SW} Stein, E.M.; and Weiss, G., \emph{Introduction to Fourier analysis on Euclidean spaces}, Princeton University Press (November 1, 1971).
\bibitem{SU} Sylvester J., and Uhlmann G., \emph{A Global Uniqueness Theorem for an Inverse Boundary Value Problem}, Annals of Mathematics Second Series, Vol. 125, No. 1 (Jan., 1987), pp. 153--169.
\bibitem{TS2008} Taimanov, I. A., and Tsarev, S. P., \emph{Blowing up solutions of the Veselov-Novikov equation}, (Russian) Dokl. Akad. Nauk \textbf{420} (2008), no. 6, 744--745; translation in Dokl. Math. \textbf{77} (2008), no. 3, 467--468.
\end{thebibliography}
\end{document}